\documentclass{amsart}


\usepackage{amsfonts}
\usepackage{amsmath}
\usepackage{amssymb}
\usepackage{mathrsfs}
\usepackage{graphicx}
\usepackage{geometry}
\usepackage{diagrams}
\usepackage[hyperfootnotes=false]{hyperref}
\hypersetup{colorlinks=false, linkcolor=green, citecolor=red, hyperfootnotes=true}





\geometry{
    letterpaper,
    left=   1.1in,
    right=  1.1in,
    top=    1.0in,
    bottom= 1.15in
}

\newtheorem{theorem}{Theorem}[section]

\newtheorem{corollary}[theorem]{Corollary}

\newtheorem{definition}[theorem]{Definition}
\newtheorem{example}[theorem]{Example}

\newtheorem{lemma}[theorem]{Lemma}

\newtheorem{proposition}[theorem]{Proposition}
\newtheorem{remark}[theorem]{Remark}





\begin{document}

\renewcommand{\thefootnote}{}





\makeatletter 
\newcommand*{\defeq}{\mathrel{\rlap{%
                     \raisebox{0.3ex}{$\m@th\cdot$}}%
                     \raisebox{-0.3ex}{$\m@th\cdot$}}%
                     =} 
\makeatother 

\baselineskip=13pt


\title{The Revised and Uniform Fundamental Groups \\
 and Universal Covers of Geodesic Spaces}


\author{Jay Wilkins \\
University of Connecticut \\
Department of Mathematics \\
196 Auditorium Road, Unit 3009 \\
Storrs, CT 06269-3009 \\
Email: leonard.wilkins@uconn.edu}
\address{Jay Wilkins \\ Department of Mathematics, University of Connecticut, Storrs, CT 06029}
\email{leonard.wilkins@uconn.edu}







\begin{abstract}
Sormani and Wei proved in 2004 that a compact geodesic space has a categorical universal cover if and only if its covering/critical spectrum is finite.  We add to this several equivalent conditions pertaining to the geometry and topology of the revised and uniform fundamental groups.  We show that a compact geodesic space $X$ has a universal cover if and only if the following hold: 1) its revised and uniform fundamental groups are finitely presented, or, more generally, countable; 2) its revised fundamental group is discrete as a quotient of the topological fundamental group $\pi_1^{top}(X)$. In the process, we classify the topological singularities in $X$, and we show that the above conditions imply closed liftings of all sufficiently small path loops to all covers of $X$, generalizing the traditional semilocally simply connected property.  A geodesic space $X$ with this new property is called semilocally $r$-simply connected, and $X$ has a universal cover if and only if it satisfies this condition.  We then introduce a topology on $\pi_1(X)$ called the covering topology, which always makes $\pi_1(X)$ a topological group.  We establish several connections between properties of the covering topology, the existence of simply connected and universal covers, and geometries on the fundamental group.

\emph{Key words and phrases}: geodesic space; critical spectrum; universal cover; revised fundamental group; uniform fundamental group; topological fundamental group.

2010 \emph{Mathematics Subject Classification}: Primary 54E45; Secondary 57M10, 54H11, 20F38.
\end{abstract}

\maketitle


\tableofcontents

\pagestyle{myheadings}
\markboth{J. Wilkins}{The Revised and Uniform Fundamental Groups and Universal Covers}


\section{Introduction and Main Results}

In \cite{SW2}, Sormani and Wei formally defined the covering spectrum of a compact geodesic space, a geometric invariant that detects one-dimensional holes of positive intrinsic diameter.  They showed (Theorem 3.4, \cite{SW2}) that a compact geodesic space $X$ has a universal cover if and only if its covering spectrum, $CovSpec(X)$, is finite.  When this holds, they defined the revised fundamental group of $X$ to be the deck group of the universal cover, and they showed that it is finitely generated (Proposition 6.4, \cite{SW2}).

In this paper, we extend the above results through an investigation of the geometry and topology of a slightly generalized revised fundamental group and another associated group called the uniform fundamental group.  To do so, we apply the generalized covering methods developed by Berestovskii-Plaut for uniform spaces (\cite{BPUU}) to the more restricted but important class of geodesic spaces.

In \cite{BPUU}, Berestovskii and Plaut defined the uniform universal covering and its deck group, the uniform fundamental group.  These are generalizations of the classical universal cover and fundamental group for uniform spaces - hence, metric spaces - that are not necessarily semilocally simply connected or even locally path connected.  Spaces for which the uniform universal cover exists are called coverable, and these include all geodesic spaces and, thus, Gromov-Hausdorff limits of Riemannian manifolds.  The foundation for \cite{BPUU} is discrete homotopy theory, an analog of classical path homotopy theory that uses discrete chains and chain homotopies instead of the continuous counterparts.  In \cite{W2}, and with Plaut et al. in \cite{PWetal}, the author used discrete homotopy theory to generalize the covering spectrum.  When the methods of \cite{BPUU} are applied to a metric space $X$, one obtains the $\mathbb R_+$-parameterized collection of $\varepsilon$-covers of $X$, $\{X_{\varepsilon}\}_{\varepsilon > 0}$.  These covers, in turn, determine the critical spectrum of $X$, the set of values, $Cr(X)$, at which the equivalence class of $X_{\varepsilon}$ changes as $\varepsilon$ decreases to $0$.  The uniform universal cover and uniform fundamental group are inverse limits of the $\varepsilon$-covers and their deck groups, respectively (see Section 2).

With the exception of the inverse limit formulations, this construction and spectral definition parallel those of Sormani-Wei in \cite{SW1} and \cite{SW2}.  The primary difference between the covering and critical spectra is the applicability.  The Sormani-Wei construction relies on a classical method of Spanier (\cite{S}) that requires local path connectivity of the underlying metric space $X$, which - if $X$ is compact and connected - is equivalent to being geodesic.  The Berestovskii-Plaut construction, however, can be carried out much more generally, allowing investigation of the critical spectra of more exotic and pathological metric spaces.  Like the covering spectrum, the critical spectrum detects fundamental group generators, but it also detects other metric structures in the general case that do not show up in geodesic spaces (cf. \cite{PWetal}).  Nevertheless, Plaut and the author showed in \cite{PW2} that when the underlying metric space is compact geodesic, the two spectra differ only by a constant multiple, namely $3Cr(X) = 2CovSpec(X)$.  Thus, the covering spectrum, appropriately rescaled, is a special case of the critical spectrum in the compact geodesic setting.  In particular, this fact and Sormani-Wei's theorem, together, show that a compact geodesic space has a universal cover if and only if its critical spectrum is finite.

Since we will be exploiting the methods of Berestovskii-Plaut and the uniform structure of the given geodesic space, our results will be presented in the language of discrete homotopy theory and the critical spectrum.  The relevant technical background is given in Section 2.  In this paper, a cover or covering space of $X$ will always imply a traditional, connected cover $f:Y \rightarrow X$ with the property that each $x \in X$ is contained in an evenly covered neighborhood with respect to $f$.  A \textit{universal cover} of $X$ will mean a traditional, categorical universal cover (not necessarily simply connected), or a cover $f:Y \rightarrow X$ so that, for any other cover $g:Z \rightarrow X$, there is a cover $h: Y \rightarrow Z$ such that $g \circ h = f$.  Except for the uniform universal cover, we will not need or use any of the recent, non-traditional generalizations of universal covers that relax the evenly covered property (cf.  \cite{Biss}, \cite{BS}, \cite{FZ}, \cite{PTM}).  When we use the uniform universal cover, it will always be explicitly referenced as such, so no confusion should result.

The fundamental observation that makes our results possible is that we can characterize local topology of a compact geodesic space, $X$, in terms of how path loops at a base point $* \in X$ lift not to a single $\varepsilon$-cover, $X_{\varepsilon}$, but to the $\varepsilon$-covers in the aggregate.  Thus, we begin Section 3 by slightly generalizing the revised fundamental group defined by Sormani-Wei in \cite{SW2}. The normal covering groups of the $\varepsilon$-covers, denoted by $K_{\varepsilon}$, intersect to form the \textit{closed lifting group}, the normal subgroup $\pi_{cl}(X) \unlhd \pi_1(X)$ representing all loops at $*$ that lift closed - that is, the lift is also a loop - to $X_{\varepsilon}$ for every $\varepsilon$.  The revised fundamental group, then, is $\bar{\pi}_1(X) \defeq \pi_1(X)/\pi_{cl}(X)$, and it isomorphically injects into the uniform fundamental group of $X$, denoted by $\Delta(X)$.  In fact, $\Delta(X)$ is isomorphic to $\breve{\pi}_1(X)$, the first shape group of $X$, showing that $\bar{\pi}_1(X)$ injects into $\breve{\pi}_1(X)$, as well.  When $X$ has a universal cover, $\bar{\pi}_1(X)$ agrees with the definition of Sormani-Wei, though they only define and discuss this group in that particular case.  Our approach shows that $\pi_{cl}(X)$ and $\bar{\pi}_1(X)$ are well-defined whether $X$ has a universal cover or not.  Indeed, it is a specific property of $\pi_{cl}(X)$ that determines when $X$ has a universal cover (Lemma \ref{univ cover first eq}).

Two obvious cases of interest occur when $\pi_{cl}(X)$ is trivial and when it is the whole fundamental group.  In the former case, $\bar{\pi}_1(X)$ is just the fundamental group, which, then, injects into $\Delta(X)$.  We prove (Lemma \ref{k0 trivial 1d}) that $\pi_{cl}(X)$ is always trivial for compact, one-dimensional geodesic spaces, and determining conditions under which $\pi_{cl}(X)$ is trivial in general is one of the goals of the final section (see Theorem \ref{sgtop thm2} below).  When $\pi_{cl}(X) = \pi_1(X)$, $X$ is its own universal cover and $Cr(X) = \emptyset$.  We show that these conditions are actually all equivalent (Corollary \ref{full clgroup}).

We define $X$ to be \textit{semilocally $r$-simply connected} if and only if each $x \in X$ has a neighborhood $U$ such that every path loop in $U$ based at $x$ lifts closed to $X_{\varepsilon}$ for all $\varepsilon > 0$ (Definition \ref{semi rconn}).  This generalization of the classical semilocally simply connected definition can be algebraically reformulated in a familiar way.  If $\bar{h}:\pi_1(X,x) \rightarrow \bar{\pi}_1(X,x)$ is the quotient map and $i:U \hookrightarrow X$ is the inclusion of a set $U\subset X$, then $X$ is semilocally $r$-simply connected at $x$ if and only if there is a neighborhood $U$ of $x$ such that the homomorphism $\bar{h} \circ i_{\ast}:\pi_1(U,x) \rightarrow \bar{\pi}_1(X,x)$ is trivial (Lemma \ref{equiv rconn}). There are other classical fundamental group results that have analogous statements in the revised case.  The classical functor $f \mapsto f_{\ast}$ has an analog for revised fundamental groups, and a homotopy equivalence $f:X \rightarrow Y$ induces a revised fundamental group isomorphism $f_{\sharp}:\bar{\pi}_1(X) \rightarrow \bar{\pi}_1(Y)$ (Lemma \ref{induced hom} and Corollary \ref{homotopy equiv}).

Proposition \ref{local lift property} gives sufficient conditions for $X$ to be semilocally $r$-simply connected, namely that $\bar{\pi}_1(X)$ be countable.  We then use lifting properties to classify the two types of topological singularities that obstruct semilocal simply connectedness (Definition \ref{regular singular point}).  A \textit{sequentially singular point} is one at which there is a sequence of path loops $\gamma_n$ with an associated, strictly decreasing sequence $r_n \searrow 0$ such that $\gamma_n$ lifts closed to $X_{r_n}$ but open to $X_{r_{n+1}}$.  These are ``Hawaiian earring-type'' topological singularities.  A point $x$ is \textit{degenerate} if every neighborhood of $x$ contains a nontrivial path loop that lifts closed to $X_{\varepsilon}$ for all $\varepsilon$; this is a generalization of ``not homotopically Hausdorff.'' Our first and primary theorem is

\begin{theorem}
\label{main theorem 1}
If $X$ is a compact geodesic space, then the following are equivalent.
\begin{enumerate}
\item[1)] $X$ has a universal cover.
\vspace{.025 in}
\item[2)] $Cr(X) = \frac{2}{3}CovSpec(X)$ is finite.
\vspace{.025 in}
\item[3)] The revised fundamental group, $\bar{\pi}_1(X)$, is any one of the following: \textbf{\textit{i)}} countable; \textbf{\textit{ii)}} finitely generated; \textbf{\textit{iii)}} finitely presented.
\vspace{.025 in}
\item[4)] The uniform fundamental group, $\Delta(X)$, is any one of the following: \textbf{\textit{i)}} countable; \textbf{\textit{ii)}} finitely generated; \textbf{\textit{iii)}} finitely presented.
\vspace{.025 in}
\item[5)] $X$ has no sequentially singular points.
\vspace{.025 in}
\item[6)] $X$ is semilocally $r$-simply connected.
\end{enumerate}
\noindent If these hold, then the universal cover $\hat{X}$ is $r$-simply connected \emph{(}i.e. $\bar{\pi}_1(\hat{X})$ is trivial\emph{)}, its deck and covering groups, respectively, are $\bar{\pi}_1(X)$ and $\pi_{cl}(X)$, and $\bar{\pi}_1(X)$ is isomorphic to $\Delta(X)$.
\end{theorem}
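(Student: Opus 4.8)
\emph{Strategy.} I would prove the equivalences as a single cycle of implications centered on condition (2), the finiteness of the critical spectrum, since that is the hypothesis under which the whole structure theory of the $\varepsilon$-covers collapses onto one cover. The implication (1) $\Leftrightarrow$ (2) is not really new: it is Sormani--Wei's theorem (Theorem 3.4, \cite{SW2}) transported through the identity $3Cr(X)=2CovSpec(X)$ of \cite{PW2}, and in our language it says precisely that $X$ has a universal cover if and only if the equivalence class of $X_\varepsilon$ stabilizes as $\varepsilon\to 0$; compare Lemma \ref{univ cover first eq}. So the real work is to tie (2) to conditions (3)--(6).

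\emph{From (2) to the structural conclusions and to (3), (4).} Assume $Cr(X)$ is finite and set $\varepsilon_0=\min Cr(X)$ (any positive number if $Cr(X)=\emptyset$). For $0<\varepsilon<\varepsilon_0$ the covers $X_\varepsilon$ are mutually equivalent, so the normal subgroups $K_\varepsilon\unlhd\pi_1(X)$ all coincide, hence equal $\bigcap_{\varepsilon>0}K_\varepsilon=\pi_{cl}(X)$; thus $X_\varepsilon=\hat X$ for small $\varepsilon$ (Lemma \ref{univ cover first eq}), its deck group is $\pi_1(X)/\pi_{cl}(X)=\bar\pi_1(X)$, and its covering group is $\pi_{cl}(X)$. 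Because the inverse system $\{X_\varepsilon\}$ is eventually constant, its inverse limit---the uniform universal cover---is $\hat X$, whence $\Delta(X)\cong\bar\pi_1(X)$. Since $\hat X$ is its own universal cover, $Cr(\hat X)=\emptyset$, so $\pi_{cl}(\hat X)=\pi_1(\hat X)$ by Corollary \ref{full clgroup} and $\bar\pi_1(\hat X)$ is trivial; this is the closing assertion of the theorem. For the group-theoretic conditions, equip $\hat X$ with the lifted length metric, under which it is a complete, locally compact geodesic space carrying a cocompact, properly discontinuous, isometric $\bar\pi_1(X)$-action with quotient $X$; the \v{S}varc--Milnor lemma then forces $\bar\pi_1(X)$ to be finitely generated, recovering Proposition 6.4 of \cite{SW2}. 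Moreover $Cr(\hat X)=\emptyset$ means $(\hat X)_\varepsilon=\hat X$ for every $\varepsilon$, i.e. every $\varepsilon$-chain loop in $\hat X$ is $\varepsilon$-null-homotopic for all $\varepsilon$, so $\hat X$ is coarsely simply connected; a geometric action on such a space yields a finitely presented group. Hence (2) implies all three versions of (3), and then of (4) as well, since $\Delta(X)\cong\bar\pi_1(X)$.

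\emph{Closing the cycle.} Within (3) the implications finitely presented $\Rightarrow$ finitely generated $\Rightarrow$ countable are trivial, as they are within (4); and the injection $\bar\pi_1(X)\hookrightarrow\Delta(X)$ from the introduction gives (4)(i) $\Rightarrow$ (3)(i). It remains to run (3)(i) $\Rightarrow$ (2). That $\bar\pi_1(X)$ countable implies $X$ semilocally $r$-simply connected is Proposition \ref{local lift property}, so (3)(i) $\Rightarrow$ (6). For (6) $\Leftrightarrow$ (5): unwinding Definitions \ref{semi rconn} and \ref{regular singular point}, failure of semilocal $r$-simple connectedness at $x$ means every neighborhood of $x$ carries a path loop not lifting closed to some $X_\varepsilon$, and a diagonal argument over a countable neighborhood basis at $x$, together with the monotonicity of the $\varepsilon$-covers, extracts loops $\gamma_n$ at (or near) $x$ and critical values $r_n\searrow 0$ exhibiting $x$ as sequentially singular; the converse is immediate. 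Finally (6) $\Rightarrow$ (2): cover the compact space $X$ by finitely many neighborhoods on each of which all path loops lift closed to every $X_\varepsilon$, and use a Lebesgue-number estimate to conclude that below some threshold no further critical value can occur, so $Cr(X)$ is finite. (One may also argue (5) $\Rightarrow$ (2) directly: were $Cr(X)$ infinite it would accumulate only at $0$, and a subsequence of associated essential loops based near a common limit point would produce a sequentially singular point.) Together with (1) $\Leftrightarrow$ (2), this closes the cycle and establishes every equivalence in the theorem along with its final sentence.

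\emph{Main obstacle.} I expect the substantive points to be, first, the two ``from below'' steps---(6) $\Rightarrow$ (2), where one must make the local closed-lifting property uniform enough, via compactness, to actually bound $Cr(X)$, and symmetrically the diagonal extraction in (6) $\Rightarrow$ (5), so that the resulting $\gamma_n$ and $r_n$ satisfy the exact pattern of Definition \ref{regular singular point}---and second, the finite-presentation half of the previous step, where one must convert the soft statement $Cr(\hat X)=\emptyset$ into an $\varepsilon$-chain filling property strong enough to invoke a Macbeath/Rips-complex presentability criterion for the cocompact deck action on $\hat X$.
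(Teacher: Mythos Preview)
Your proposal is correct and follows essentially the same architecture as the paper's proof, with a few route differences worth noting.

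For $2 \Rightarrow 3iii$, the paper simply invokes Lemma \ref{univ cover first eq} to identify $\bar\pi_1(X)\cong\pi_\varepsilon(X)$ for small $\varepsilon$ and then cites Theorem 3 of \cite{PW1}, which already shows every $\pi_\varepsilon(X)$ is finitely presented for compact geodesic $X$. Your \v{S}varc--Milnor plus coarse-simple-connectedness argument is a valid alternative---indeed it is more or less how Theorem 3 of \cite{PW1} is proved---but it is extra work here, and your ``main obstacle'' about the Macbeath/Rips criterion disappears once you cite that result. Similarly, for the $r$-simple connectedness of $\hat X$ and the identification of its deck and covering groups, the paper packages this as Lemma \ref{cover properties}, proved by a direct lollipop-lifting argument using the local isometry of $\varphi_\varepsilon$; your route via ``$\hat X$ is its own universal cover $\Rightarrow (\hat X)_\varepsilon=\hat X$'' is fine, but your appeal to Corollary \ref{full clgroup} is formally off, since that corollary is stated only for compact spaces and $\hat X$ need not be compact---the underlying logic survives without the citation.

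Finally, the paper links $5$ to the cycle by proving $2\Leftrightarrow 5$ via Lemma \ref{sequentially singular} (infinite $Cr(X)$ produces a sequentially singular point by a compactness-and-reindexing argument), rather than proving $5\Leftrightarrow 6$ directly as you do. Both work; your diagonal extraction for $\neg 6\Rightarrow\neg 5$ is correct once you observe that a loop in $B_{1/n}(x)$ automatically lifts closed to $X_{1/n}$ by Corollary \ref{loop in a ball}, forcing the obstructing $\varepsilon$ below $1/n$. The $6\Rightarrow 2$ step is identical in both proofs.
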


We have already noted that $1 \Leftrightarrow 2$ is known.  The proof of Theorem \ref{main theorem 1} will mostly show that the other statements are equivalent to $2$, but we include $1$ for both emphasis and reference.  Moreover, the implication $2 \Rightarrow 3iii$ follows directly from Lemma \ref{univ cover first eq} and a result of Plaut and the author in \cite{PW1}, which is recalled in Section 2.  The implications $3iii \Rightarrow 3ii \Rightarrow 3i$ are clear, and likewise for the parts of $4$.  The new and presently most important aspects of Theorem \ref{main theorem 1} are the sufficiency of $3i$ for $2$ to hold, the equivalence of $3$ - $6$, and the equivalence of $4$ - $6$ to $2$.  

The reader may want to compare Theorem \ref{main theorem 1} to Corollary 5.7 of \cite{CC}, where Cannon and Conner showed that a locally path connected, homotopically Hausdorff Peano continuum $X$ - or, equivalently, a homotopically Hausdorff compact geodesic space - has a simply connected cover if and only if $\pi_1(X)$ is any of the following: 1) countable, 2) finitely generated, 3) finitely presented.  We recharacterize Cannon-Conner's result via the critical spectrum, showing that $X$ has a simply connected cover if and only if $Cr(X)$ is finite \textit{and} $\pi_{cl}(X)$ is trivial (Proposition \ref{charac simply conn covers}).

In Section 5, we examine the relationship between universal covers and topologies on the fundamental group.  Recall that in \cite{Biss} Biss defined the \textit{topological fundamental group}, $\pi_1^{top}(X)$, to be $\pi_1(X)$ topologized as a quotient of the pointed loop space $\{f:[0,1] \rightarrow X \ | \ f(0) = f(1) = *\}$ with the compact-open topology, which is equivalent to the uniform topology when $X$ is geodesic.  Unfortunately, $\pi_1^{top}(X)$ is not always a topological group as was originally claimed in \cite{Biss}, even in the compact geodesic case; Fabel has shown that multiplication in the Hawaiian earring group with this topology is not continuous (\cite{Fabel2}).  However, Brazas has shown that $\pi_1^{top}(X)$ is a \textit{quasitopological group} (Lemma 1.8, \cite{BRAZ}), meaning that the inverse operation is continuous and multiplication is continuous in each variable, i.e. left and right translations are homeomorphisms (cf. \cite{AATM} for details on such groups).  Nevertheless, Biss' definition has still been effectively utilized.  For instance, Fabel also showed that a path connected, locally path connected metric space has a simply connected cover if and only if $\pi_1^{top}(X)$ is discrete (\cite{Fabel}).

The revised fundamental group inherits a natural quotient topology from $\pi_1^{top}(X)$, and we call the resulting group the \textit{topological revised fundamental group}, denoting it $\bar{\pi}_1^{top}(X)$.

\begin{theorem}
\label{uc cover eq discrete group}
If $X$ is compact geodesic, then $\bar{\pi}_1^{top}(X)$ is a $T_1$ quasitopological group, and $X$ has a universal cover if and only if $\bar{\pi}_1^{top}(X)$ is discrete.
\end{theorem}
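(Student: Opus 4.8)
The plan is to prove the two assertions with one lifting lemma carrying most of the weight. Write $q_\Omega\colon\Omega(X,\ast)\to\pi_1^{top}(X)$ and $q\colon\pi_1^{top}(X)\to\bar\pi_1^{top}(X)$ for the defining quotient maps (so $\bar\pi_1^{top}(X)=\pi_1^{top}(X)/\pi_{cl}(X)$ with the quotient topology), and recall that since $X$ is geodesic the compact--open topology on $\Omega(X,\ast)$ coincides with the uniform topology, so the sets $\{\gamma:\gamma([0,1])\subseteq B(\ast,\delta)\}$ form a neighborhood basis at the constant loop $c_\ast$. First, since $\pi_1^{top}(X)$ is a quasitopological group (\cite{BRAZ}) and $\pi_{cl}(X)\unlhd\pi_1(X)$, the quotient $\bar\pi_1^{top}(X)$ is again quasitopological by a routine check: $q$ is open because $q^{-1}(q(U))=U\,\pi_{cl}(X)$ is a union of translates of $U$, and then inversion and the left/right translations on $\bar\pi_1^{top}(X)$ are continuous because they are covered through $q$ by the corresponding continuous maps on $\pi_1^{top}(X)$. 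For $T_1$ it suffices, since translations are homeomorphisms, to show $\{\bar e\}$ is closed, i.e.\ that $\pi_{cl}(X)$ is closed in $\pi_1^{top}(X)$; this will follow from the lifting lemma below, since each $K_\varepsilon$ will be open, hence also closed (the complement of an open subgroup is a union of its translates), so that $\pi_{cl}(X)=\bigcap_{\varepsilon>0}K_\varepsilon$ is closed.

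\emph{Lifting lemma: covering subgroups are open.} Let $f\colon Y\to X$ be any cover with base points $\ast\in X$, $\tilde\ast\in f^{-1}(\ast)$. I claim $q_\Omega^{-1}\bigl(f_\ast\pi_1(Y,\tilde\ast)\bigr)$ --- the set of loops at $\ast$ lifting to an $f$-loop at $\tilde\ast$ --- is open in $\Omega(X,\ast)$; since $q_\Omega$ is a quotient map, this makes $f_\ast\pi_1(Y,\tilde\ast)$ open in $\pi_1^{top}(X)$. Given such a $\gamma$ with lift $\tilde\gamma$, the classical continuous dependence of the lift on the path furnishes $\delta>0$ so that every loop $\gamma'$ with $\sup_t d(\gamma(t),\gamma'(t))<\delta$ lifts to a path $\tilde{\gamma'}$ from $\tilde\ast$ whose endpoint lies arbitrarily close to $\tilde\gamma(1)=\tilde\ast$; as the fiber $f^{-1}(\ast)$ is discrete, shrinking $\delta$ forces $\tilde{\gamma'}(1)=\tilde\ast$, so $\gamma'$ lifts closed. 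Taking $Y=X_\varepsilon$ gives $K_\varepsilon$ open; taking $Y=\hat X$ when it exists will be used below.

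\emph{Equivalence with the universal cover.} By definition of the quotient topology, $\bar\pi_1^{top}(X)$ is discrete $\iff\{\bar e\}$ is open $\iff\pi_{cl}(X)$ is open in $\pi_1^{top}(X)$. If $X$ has a universal cover $\hat X$, then by Theorem~\ref{main theorem 1} $\pi_{cl}(X)$ is precisely the covering subgroup of the genuine cover $\hat X\to X$, hence open by the lifting lemma, so $\bar\pi_1^{top}(X)$ is discrete. Conversely, suppose $\pi_{cl}(X)$ is open. Then $q_\Omega^{-1}(\pi_{cl}(X))$ is open in $\Omega(X,\ast)$; since ``lifting closed to $X_\varepsilon$'' is homotopy-invariant, this preimage is exactly the set $L$ of loops at $\ast$ that lift closed to every $X_\varepsilon$, and $c_\ast\in L$. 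Openness of $L$ at $c_\ast$ then yields $\delta>0$ with every loop at $\ast$ inside $B(\ast,\delta)$ lying in $L$ --- that is, $X$ is semilocally $r$-simply connected at $\ast$ (Definition~\ref{semi rconn}). Because a path-connected $X$ has change-of-basepoint homeomorphisms $\pi_1^{top}(X,\ast)\to\pi_1^{top}(X,x)$ carrying $\pi_{cl}(X,\ast)$ onto $\pi_{cl}(X,x)$, the same argument runs at every $x\in X$, so $X$ is semilocally $r$-simply connected and therefore has a universal cover by Theorem~\ref{main theorem 1} $(6\Rightarrow1)$.

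\emph{Expected obstacle.} The crux is the lifting lemma together with the passage from ``$L$ open'' to a genuine semilocal $r$-simple connectivity radius: extracting a single $\delta$ that controls the lift's endpoint uniformly over all nearby loops, and recognizing that this is precisely where the uniform --- rather than merely compact--open --- description of the loop space is needed. The remaining ingredients (the quasitopological-group bookkeeping, homotopy-invariance of closed lifting, change of base point, and the identification of $\pi_{cl}(X)$ with the covering subgroup of $\hat X$) are either routine or already available from the earlier sections.
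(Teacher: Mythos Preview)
Your argument is correct and arrives at the same conclusion via Theorem~\ref{main theorem 1}, but the route differs from the paper's in two places worth noting. First, for the openness of the covering subgroups the paper proves an explicit $\varepsilon/3$-stability lemma using discrete chains (Lemma~\ref{close path lifts}): if two loops are uniformly within $\varepsilon/3$, a strong $\varepsilon/3$-chain along one is $\varepsilon$-homotopic to a strong $\varepsilon$-chain along the other. Your ``continuous dependence of lifts'' argument is the classical covering-space version and is more general (it handles any cover, not just $X_\varepsilon$), but it is less explicit; the paper's version gives a quantitative radius and avoids appealing to continuity of the lift map. Second, for the converse direction the paper argues by contrapositive through condition~(5) of Theorem~\ref{main theorem 1}: if $X$ has a sequentially singular point, one exhibits loops $\gamma_n$ with $[\gamma_n]\notin\pi_{cl}(X)$ in every uniform neighborhood of $c_\ast$, so $\pi_{cl}(X)$ is not open (Lemma~\ref{ss point not discrete}). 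You instead read off semilocal $r$-simple connectedness directly from the openness of $\pi_{cl}(X)$ at $c_\ast$ and then invoke condition~(6). Your route is arguably more direct and sidesteps the singular-point classification; the paper's route makes the geometric obstruction to discreteness visible. Both are valid and of comparable length.
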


We then introduce a new topology on $\pi_1(X)$ that is an example of a \textit{subgroup topology} as defined by Bogley and Sierdaski in their preprint \cite{BS}.  Specifically, the cosets of the $\varepsilon$-covering groups, $\{gK_{\varepsilon}: g \in \pi_1(X)\}$, form a basis for a topology we call the \textit{covering topology} on $\pi_1(X)$.  We denote the fundamental group with this topology by $\pi_1^{\mathcal C}(X)$, and we show that $\pi_1^{\mathcal C}(X)$ is always a topological group.  The revised fundamental group with the inherited quotient topology is denoted $\bar{\pi}_1^{\mathcal C}(X)$, and we call it the \textit{revised $\mathcal C$-fundamental group}.  This yields our final two theorems. Theorem \ref{sgtop thm1} shows that $\pi_1^{\mathcal C}(X)$ is just as effective as $\pi_1^{top}(X)$ with regard to detecting universal and simply connected covers.  Theorem \ref{sgtop thm2} gives conditions for $\pi_{cl}(X)$ to be trivial, and it provides a nice picture of the geometric connection between $\pi_1^{\mathcal C}(X)$ and $\bar{\pi}_1^{\mathcal C}(X)$.  Namely, $\bar{\pi}_1^{\mathcal C}(X)$ is in bijective correspondence with the connected components of $\pi_1^{\mathcal C}(X)$.

\begin{theorem}
\label{sgtop thm1}
If $X$ is a compact geodesic space, then $X$ has a universal cover \emph{(}respectively, simply connected cover\emph{)} if and only if $\bar{\pi}_1^{\mathcal C}(X)$ \emph{(}respectively, $\pi_1^{\mathcal C}(X)$\emph{)} is discrete.
\end{theorem}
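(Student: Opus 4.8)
The plan is to reduce both equivalences to the single structural fact recorded in Lemma~\ref{univ cover first eq}: $X$ has a universal cover precisely when the descending intersection $\pi_{cl}(X) = \bigcap_{\varepsilon > 0} K_\varepsilon$ is actually attained, i.e. when $K_\varepsilon = \pi_{cl}(X)$ for some $\varepsilon > 0$ (equivalently, the universal cover is itself one of the $\varepsilon$-covers). Once this is granted, everything reduces to unwinding the covering topology at the identity.

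First I would record that a topological group is discrete if and only if $\{e\}$ is open, and that in $\pi_1^{\mathcal C}(X)$ the basic open sets containing $e$ are exactly the subgroups $K_\varepsilon$; since a singleton is open iff it is itself a basic set and $e \in gK_\varepsilon$ forces $gK_\varepsilon = K_\varepsilon$, we get that $\pi_1^{\mathcal C}(X)$ is discrete if and only if $K_\varepsilon = \{e\}$ for some $\varepsilon > 0$ (the $K_\varepsilon$ are moreover nested, so $\{K_\varepsilon : \varepsilon > 0\}$ is a genuine neighborhood basis at $e$, as already used to show $\pi_1^{\mathcal C}(X)$ is a topological group). For the revised group, since $\pi_1^{\mathcal C}(X)$ is a topological group the natural map $q \colon \pi_1^{\mathcal C}(X) \to \bar{\pi}_1^{\mathcal C}(X)$ is open and continuous — because $q^{-1}(q(U)) = U\cdot\pi_{cl}(X)$ is a union of translates of $U$ — so the images $q(K_\varepsilon) = K_\varepsilon/\pi_{cl}(X)$ form a neighborhood basis at the identity of $\bar{\pi}_1^{\mathcal C}(X)$. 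Hence $\bar{\pi}_1^{\mathcal C}(X)$ is discrete if and only if $K_\varepsilon = \pi_{cl}(X)$ for some $\varepsilon > 0$.

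Combining the second reduction with Lemma~\ref{univ cover first eq} gives immediately that $\bar{\pi}_1^{\mathcal C}(X)$ is discrete $\iff$ $K_\varepsilon = \pi_{cl}(X)$ for some $\varepsilon > 0$ $\iff$ $X$ has a universal cover. For the simply connected statement: if $K_\varepsilon = \{e\}$ for some $\varepsilon > 0$, then $X_\varepsilon \to X$ has trivial covering group, so $X_\varepsilon$ is a simply connected cover of $X$. Conversely, if $X$ has a simply connected cover, then by Proposition~\ref{charac simply conn covers} $Cr(X)$ is finite and $\pi_{cl}(X)$ is trivial; finiteness of $Cr(X)$ means $X$ has a universal cover, so by Lemma~\ref{univ cover first eq} $K_\varepsilon = \pi_{cl}(X) = \{e\}$ for all sufficiently small $\varepsilon > 0$. (Alternatively: a simply connected cover of a Peano continuum is categorical universal, hence by Lemma~\ref{univ cover first eq} equivalent to some $X_\varepsilon$, whose simple connectivity forces $K_\varepsilon = \{e\}$.) Thus $\pi_1^{\mathcal C}(X)$ is discrete $\iff$ $X$ has a simply connected cover.

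The genuinely substantive input is Lemma~\ref{univ cover first eq} (together with Proposition~\ref{charac simply conn covers}); once these are in hand the argument is bookkeeping about the subgroup topology. The only points requiring care are the openness of $q$ — needed so that $\bar{\pi}_1^{\mathcal C}(X)$ truly carries the $q(K_\varepsilon)$ as a basis at the identity and not merely a coarser subbasis — and the fact that the $K_\varepsilon$ are totally ordered by inclusion, so that discreteness is detected by a single $K_\varepsilon$ rather than by finite intersections of translates. I do not anticipate any obstacle beyond stating these carefully.
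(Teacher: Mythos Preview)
Your proof is correct, and for the simply connected equivalence it is essentially identical to the paper's (both invoke Proposition~\ref{charac simply conn covers} and the fact that the $K_\varepsilon$ form a neighborhood basis at the identity). For the universal cover equivalence, your route is slightly more direct than the paper's: you argue that openness of the quotient map $q$ forces $\{q(K_\varepsilon)\}$ to be a neighborhood basis at the identity in $\bar{\pi}_1^{\mathcal C}(X)$, so discreteness is equivalent to $K_\varepsilon = \pi_{cl}(X)$ for some $\varepsilon$, and then you invoke Lemma~\ref{univ cover first eq} directly. The paper instead proves the ``no universal cover $\Rightarrow$ not discrete'' direction by contrapositive, passing through Theorem~\ref{main theorem 1} to produce a sequentially singular point and arguing that the resulting sequence of loops witnesses that $\pi_{cl}(X)$ is not open. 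Your approach is cleaner and stays purely at the level of the subgroup topology, whereas the paper's argument is more geometric but somewhat roundabout given that Lemma~\ref{univ cover first eq} already encodes exactly the needed equivalence.
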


\begin{theorem}
\label{sgtop thm2}
Let $X$ be a compact geodesic space.  Then the connected component of $\pi_1^{\mathcal C}(X)$ containing $g \in \pi_1^{\mathcal C}(X)$ is $g\pi_{cl}(X)$, and the following are equivalent.
\begin{enumerate}
\item[1)] $\pi_{cl}(X)$ is trivial.
\item[2)] $\pi_1^{\mathcal C}(X)$ is totally disconnected.
\item[3)] $\pi_1^{\mathcal C}(X)$ is Hausdorff.
\item[4)] $\pi_1^{\mathcal C}(X)$ is a geometric group over the semigroup $\mathbb R^{max}$, with geometry $\{K_{\varepsilon}\}_{\varepsilon > 0}$.
\item[5)] $\pi_1^{\mathcal C}(X)$ admits a compatible, left-invariant ultrametric.
\end{enumerate}
If these conditions hold, then $\pi_1(X)$ isomorphically injects into the first shape group of $X$, $\breve{\pi}_1(X)$.
\end{theorem}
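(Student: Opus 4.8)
The plan is to prove the statement about connected components first and unconditionally, deduce $1\Leftrightarrow 2\Leftrightarrow 3$ from it, and then obtain the ultrametric of $5$ and the geometry of $4$ directly from the monotone filtration $\{K_\varepsilon\}_{\varepsilon>0}$. For the component statement I would first note that each $K_\varepsilon$ is clopen in $\pi_1^{\mathcal C}(X)$: it is a basic open set, and its complement is the union of the remaining cosets $gK_\varepsilon$, hence open. So the component of $e$ is trapped inside $K_\varepsilon$ for every $\varepsilon$, i.e. inside $\bigcap_{\varepsilon>0}K_\varepsilon=\pi_{cl}(X)$. Conversely I would show $\pi_{cl}(X)$ is connected by checking that it carries the indiscrete subspace topology: if a basic open set $gK_\varepsilon$ meets $\pi_{cl}(X)$ in a point $h$, then $gK_\varepsilon=hK_\varepsilon=K_\varepsilon$ (since $h\in\pi_{cl}(X)\subseteq K_\varepsilon$), whence $gK_\varepsilon\cap\pi_{cl}(X)=\pi_{cl}(X)$; so the only nonempty relatively open subset of $\pi_{cl}(X)$ is $\pi_{cl}(X)$ itself. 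Thus the component of $e$ equals $\pi_{cl}(X)$, and transporting by the left-translation homeomorphism $x\mapsto gx$ gives that the component of $g$ is $g\pi_{cl}(X)$.

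Given this, $1\Leftrightarrow 2$ is immediate: by homogeneity every component is a point exactly when $\pi_{cl}(X)=\{e\}$. For $1\Leftrightarrow 3$ I would observe that $\{K_\varepsilon\}_{\varepsilon>0}$ is a neighborhood base at $e$ — a basic open set $gK_\varepsilon$ contains $e$ iff $g\in K_\varepsilon$, i.e. iff it equals $K_\varepsilon$ — so, since $\pi_1^{\mathcal C}(X)$ is a topological group, $\overline{\{e\}}=\bigcap_{\varepsilon>0}K_\varepsilon=\pi_{cl}(X)$, and a topological group is Hausdorff iff $\overline{\{e\}}=\{e\}$.

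For $1\Leftrightarrow 5$, assuming $\pi_{cl}(X)$ trivial I would set $d(g,h)=\inf\{\varepsilon>0:g^{-1}h\in K_\varepsilon\}$, the infimum being over a nonempty set since $K_\varepsilon=\pi_1(X)$ once $\varepsilon$ is large. As each $K_\varepsilon$ is a normal subgroup and the family is totally ordered by inclusion, $d$ is a left-invariant ultrametric, with $d(g,h)=0$ iff $g^{-1}h\in\bigcap_\varepsilon K_\varepsilon=\{e\}$. Its balls are $B(g,r)=g\bigl(\bigcup_{\delta<r}K_\delta\bigr)$, and a short argument from monotonicity (each $K_\varepsilon$ contains $\bigcup_{\delta<\varepsilon}K_\delta$ and is a union of such sets) shows these balls generate precisely the covering topology, so $d$ is compatible; the converse is trivial since a compatible ultrametric forces Hausdorffness, giving $5\Rightarrow 3\Rightarrow 1$. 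For $1\Leftrightarrow 4$ I would unwind the definition of a geometric group over a semigroup from \cite{BPUU} (recalled in Section 2): the family $\{K_\varepsilon\}_{\varepsilon>0}$, indexed by $\mathbb{R}^{max}$, satisfies every axiom of an $\mathbb{R}^{max}$-geometry on $\pi_1(X)$ — normality, directedness, compatibility with $\max$ — as an immediate consequence of the elementary properties of the $\varepsilon$-covers, \emph{except} the separation axiom $\bigcap_\varepsilon K_\varepsilon=\{e\}$, which is exactly $\pi_{cl}(X)=\{e\}$.

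Finally, if these conditions hold then $\bar{\pi}_1(X)=\pi_1(X)/\pi_{cl}(X)$ is just $\pi_1(X)$, and since $\bar{\pi}_1(X)$ injects into $\Delta(X)\cong\breve{\pi}_1(X)$, so does $\pi_1(X)$. The bookkeeping here is light; the only steps that require real care are the two places where an external structure must be matched against the covering topology — verifying that the ultrametric balls $g\bigl(\bigcup_{\delta<r}K_\delta\bigr)$ recover the cosets $gK_\varepsilon$, and checking our filtration against the axiom list for an $\mathbb{R}^{max}$-geometry — and both reduce to the monotonicity and subgroup properties of $\{K_\varepsilon\}$ together with the basic facts about $\varepsilon$-covers from Section 2, so I do not expect a genuine obstacle.
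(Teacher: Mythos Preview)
Your proof is correct, but your route differs from the paper's in two respects. First, for the connected-component statement and the equivalence $1\Leftrightarrow 2\Leftrightarrow 3$, the paper simply invokes the general Bogley--Sierdaski result (Lemma \ref{BS lemma}) on subgroup topologies, whereas you reprove this directly by showing each $K_\varepsilon$ is clopen and that $\pi_{cl}(X)$ carries the indiscrete subspace topology. Second, for $4\Rightarrow 5$ the paper cites the Berestovskii--Plaut--Stallman correspondence between $\mathbb R^{max}$-geometries and left-invariant ultrametrics (after first checking $\pi_1^{\mathcal C}(X)$ is first countable), while you bypass this entirely by constructing the ultrametric $d(g,h)=\inf\{\varepsilon:g^{-1}h\in K_\varepsilon\}$ explicitly and verifying compatibility by hand. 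Your approach is more self-contained and elementary, requiring no external lookups; the paper's is more economical and situates the result within the general theory. One small slip: the definition of geometric group is in \cite{BPGG}, not \cite{BPUU}, and is recalled in the introduction rather than Section 2.
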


\noindent The first statement of Theorem \ref{sgtop thm2} and the equivalence of $2$ and $3$ are part of a more general result on subgroup topologies proved by Bogley and Sierdaski in \cite{BS} (see Lemma \ref{BS lemma}).  The particular application, the last statement, and the equivalence of $1$, $4$, and $5$ to $2$ and $3$ are the new results here.

Conditions 2, 4, and 5 above are closely related.  A geometric group over an abelian, partially ordered semigroup, $S$, is a topological group, $G$, with a local basis $\{U_s\}_{s \in S}$ at the identity, $e$, such that the following hold for all $s$, $t \in S$: 1) $U_s \subset U_t$ if and only if $s \leq t$, and $\bigcup_{s \in S} U_s = G$; 2) $\bigcap_{s \in S} U_s = \{e\}$; 3) $U_s^{-1} = U_s$ and $U_s U_t = U_{s+t}$.  The collection $\{U_s\}$ is called a \textit{geometry} on $G$.  This notion was first defined by Berestovskii, Plaut, and Stallman in \cite{BPGG}, and the type of geometry that $G$ admits, if any, is strongly related to what types of metrics induce its given topology.  We show that the collection $\{K_{\varepsilon}\}_{\varepsilon > 0}$, indexed over $S=\mathbb R^{max}$ - the positive reals with their usual order but operation $a+b \defeq \max\{a,b\}$ - is always \textit{almost} a geometry on $\pi_1^{\mathcal C}(X)$, possibly lacking only one of the required conditions.  That condition is precisely part 2 of the definition, i.e. that $\pi_{cl}(X)=\bigcap_{\varepsilon > 0}K_{\varepsilon}$ is trivial.  Moreover, geometries over $\mathbb R^{max}$ correspond to ultrametrics (see Section 5), and ultrametric spaces are necessarily totally disconnected.

\vspace{.1 in}
\section{Background: Discrete Homotopy Theory}
\label{appendix}

Recall that a metric space $X$ is a \textit{geodesic space} if any two points in $X$ are joined by a (minimal) \textit{geodesic}, or an arclength parameterized curve, $\gamma:[a,b] \rightarrow X$, having length equal to the distance between its endpoints.  This is slightly different from the Riemannian definition of a geodesic, which only requires that a curve $\gamma$ be locally minimizing. It is well-known that when $X$ is geodesic, $Y$ is a connected topological space, and $f:Y\rightarrow X$ is a covering map, the metric on $X$ can be lifted to a unique geodesic metric on $Y$ that makes $f$ a local isometry.  If $X$ is also compact, this lifted metric makes $f$ a \textit{metric covering map} - a traditional covering map that is also a \textit{uniform} local isometry.  Thus, there is no distinction between metric coverings and general connected coverings in the compact geodesic setting.  

All of the spaces we consider in Section 3 and beyond will be geodesic, and compactness will be assumed when necessary.  It should be noted, however, that the Bing-Moise Theorem (Theorem 8 in \cite{Bing}, Theorem 4 in \cite{Moise}) establishes for a compact, connected, metric space the equivalence of local connectedness, local path connectedness, and the existence of a compatible geodesic metric.  Thus, every \textit{Peano continuum} - or compact, connected, locally connected metric space - admits a compatible geodesic metric, and any topological result that holds for compact geodesic spaces holds for all Peano continua.

We will outline the discrete homotopy constructions of \cite{BPUU} as applied to metric spaces.  Readers familiar with the results and methods of discrete homotopy theory may want to skip this section and simply refer back to it as needed.  Further explanations, details, and proofs may be found in \cite{BPUU} in the context of uniform spaces, and in \cite{W2} and \cite{PWetal} for metric spaces.

Let $X$ be a connected metric space, and fix $\varepsilon >0$. An $\varepsilon $\textit{-chain} $\alpha $ in $X$ is a finite sequence $\alpha =\{x_{0},x_{1},\dots ,x_{n}\}$ such that $d(x_{i-1},x_{i})<\varepsilon $ for $i=1,\dots ,n$. A \textit{basic move} on an $\varepsilon $-chain is the addition or removal of a single point with the conditions that the endpoints remain fixed and the resulting chain is still an $\varepsilon $-chain. Two $\varepsilon $-chains $\alpha $ and $\beta $ are $\varepsilon $\textit{-homotopic} if there is a finite sequence of $\varepsilon$-chains, $H=\{\alpha =\gamma _{0},\gamma _{1},\dots ,\gamma _{k-1},\gamma _{k}=\beta \}$ - called an \textit{$\varepsilon $-homotopy} - such that each $\gamma _{i}$ differs from $\gamma _{i-1}$ by a basic move. The relation ``$\varepsilon$-homotopic'' is an equivalence relation on $\varepsilon$-chains in $X$, and it carries the same basic concatenation and algebraic or groupoid properties as traditional path homotopy equivalence.  

For a fixed base point $\ast \in X$, $X_{\varepsilon }$ is the set of all equivalence classes of $\varepsilon $-chains in $X$ beginning at $\ast$, $[\{\ast =x_{0},\dots,x_{n}\}]_{\varepsilon }$, and $\varphi _{\varepsilon }:X_{\varepsilon }\rightarrow X$ is the endpoint map taking $[\{\ast=x_{0},\dots ,x_{n}\}]_{\varepsilon }$ to $x_{n}$. There is a natural metric, $d_{\varepsilon}$, on $X_{\varepsilon}$ that makes $\varphi_{\varepsilon}$ a regular metric covering map (cf. \cite{PWetal}, \cite{PW1}, or \cite{W2}). We call $X_{\varepsilon}$ and its deck group, $\pi_{\varepsilon}(X)$, the \textit{$\varepsilon$-cover} and $\varepsilon$\textit{-group} of $X$, respectively.  The $\varepsilon$-group is naturally identified with the subset of $X_{\varepsilon}$ consisting of classes of $\varepsilon$\textit{-loops} at $*$, which is a group under concatenation.  If an $\varepsilon $-loop $\alpha =\{\ast=x_{0},\dots ,x_{n}=\ast \}$ is $\varepsilon $-homotopic to the trivial loop $\{\ast \}$ then we say $\alpha $ is $\varepsilon $-\textit{null}.  Clearly, the identity of $\pi_{\varepsilon}(X)$ is $[\{*\}]_{\varepsilon}$, and we take $\varphi_{\varepsilon}:(X_{\varepsilon},\tilde{*}) \rightarrow (X,*)$ to be a pointed map, where $\tilde{*}$ will always be a shorthand notation for the identity element $[\{*\}]_{\varepsilon}\in \pi_{\varepsilon}(X) \subset X_{\varepsilon}$.

The $\varepsilon$-group is uniformly discrete and left invariant as a metric subspace of $X_{\varepsilon}$.  It acts discretely by isometries on $X_{\varepsilon}$ via left concatenation. That is, $[\alpha]_{\varepsilon}([\beta]_{\varepsilon}) = [\alpha \beta]_{\varepsilon}$ for $[\alpha]_{\varepsilon} \in \pi_{\varepsilon}(X)$ and $[\beta]_{\varepsilon} \in X_{\varepsilon}$, and if $d_{\varepsilon}([\alpha]_{\varepsilon}[\beta]_{\varepsilon},[\beta]_{\varepsilon}) < \varepsilon$, then $[\alpha]_{\varepsilon}$ is trivial.  A discrete action is necessarily free and properly discontinuous.  The resulting metric quotient $X_{\varepsilon}/\pi_{\varepsilon}(X)$ is homeomorphic and uniformly locally isometric to $X$; the two are isometric when $X$ is geodesic.  When $X$ is compact geodesic, the $\varepsilon$-groups are finitely presented (Theorem 3, \cite{PW1}), with a set of generators $\{[\gamma_i]_{\varepsilon}\}_{i=1}^n$ and relations of the form $[\gamma_i]_{\varepsilon}[\gamma_j]_{\varepsilon}=[\gamma_k]_{\varepsilon}$.

The preceding construction is independent of the base point. If $*'$ is another base point and $\alpha$ is a fixed $\varepsilon$-chain from $*'$ to $*$, then the maps $[\beta]_{\varepsilon} \mapsto [\alpha \beta]_{\varepsilon}$ and $[\gamma]_{\varepsilon} \mapsto [\alpha \gamma \alpha^{-1}]_{\varepsilon}$ are, respectively, an isometric covering equivalence from $(X_{\varepsilon},\tilde{*})$ to $(X_{\varepsilon},\tilde{*}')$ and an isomorphism from $\pi_{\varepsilon}(X,*)$ to $\pi_{\varepsilon}(X,*')$. Thus, we usually just fix a base point $*$ in $X$ and use it to determine all $\varepsilon$-covers and groups.  This gives us collections $\{X_{\varepsilon}\}_{\varepsilon > 0}$ and $\{\pi_{\varepsilon}(X)\}_{\varepsilon > 0}$, which stratify the covering spaces and fundamental group of $X$.  Indeed, if $X$ is compact geodesic, every connected cover of $X$ is covered by $X_{\varepsilon}$ for small enough $\varepsilon$.

Now, we assume that $X$ is geodesic.  The natural metric on $X_{\varepsilon}$ mentioned above is equal to the lifted geodesic metric from $X$ (Proposition 24, \cite{PW1}).  Given $0<\delta <\varepsilon $, there is a well-defined, surjective bonding map $\varphi _{\varepsilon \delta}:X_{\delta }\rightarrow X_{\varepsilon }$ that simply treats a $\delta $-chain as an $\varepsilon $-chain, i.e. $\varphi _{\varepsilon \delta}([\alpha ]_{\delta })=[\alpha ]_{\varepsilon }$.  These maps are metric covering maps between geodesic spaces and, thus, isometries when they are injective.  They also satisfy the composition relation $\varphi_{\varepsilon \tau} = \varphi_{\varepsilon \delta} \circ \varphi_{\delta \tau}$ when $0 < \tau < \delta < \varepsilon$. The restriction of $\varphi_{\varepsilon \delta}$ to $\pi_{\delta}(X) \subset X_{\delta}$ is a homomorphism onto $\pi_{\varepsilon}(X)\subset X_{\varepsilon}$, which we denote by $\Phi_{\varepsilon \delta}:\pi_{\delta}(X) \rightarrow \pi_{\varepsilon}(X)$. For any $0 < \delta < \varepsilon$, $\Phi_{\varepsilon \delta}$ is injective if and only if $\varphi_{\varepsilon \delta}$ is injective, and these homomorphisms satisfy the obvious analog of the previously mentioned composition relation.

A positive number $\varepsilon$ is a \textit{critical value} of a compact geodesic space $X$ if there is a nontrivial $\varepsilon$-loop, $\gamma$, at $*$ that is $\delta$-null for all $\delta > \varepsilon$.  Equivalently, there is a nontrivial element $[\gamma]_{\varepsilon} \in \pi_{\varepsilon}(X)$ that is in $ker \ \Phi_{\delta \varepsilon}$ for all $\delta > \varepsilon$.    The set $Cr(X)$ of all critical values of $X$ is called the \textit{critical spectrum} of $X$. \textit{It should be noted that our present definition of a critical value relies on the assumption that $X$ is compact geodesic.}  As we mentioned in the introduction, critical values of metric spaces can be defined more generally, and the definition given here would not suffice to capture every value that should be critical for a non-compact or non-geodesic space.  See \cite{PWetal} for the general definition and a classification of the types of critical values.  For compact geodesic $X$, however, critical values occur only in this very specific way (Lemma 3.1.10, \cite{W2}), allowing for the present simpler definition.

The critical/covering spectrum of a compact geodesic space is closed, discrete, and bounded above by $diam(X)$ in $\mathbb R_+=(0,\infty)$, although $\inf Cr(X)$ may be $0$.  Thus, either $Cr(X)$ is finite, or it is a strictly decreasing sequence of isolated critical values converging to $0$. This was originally shown by Sormani-Wei for $CovSpec(X)$ in \cite{SW2}, and Plaut-Wilkins in \cite{PW1} constructed a different direct proof for $Cr(X)$ not using the equality $3Cr(X) = 2CovSpec(X)$.  We have noted that these spectra detect fundamental group generators.  The standard examples illustrating this involve circles (and not by coincidence - see \cite{PW1}).  For instance, if $X$ is the geodesic circle of circumference $r$, $X = S_r^1$, then $X_{\varepsilon} = X$ and $\pi_{\varepsilon}(X)$ is trivial for $\varepsilon > \frac{r}{3}$, while $X_{\varepsilon} = \mathbb R$ and $\pi_{\varepsilon}(X) \cong \mathbb Z$ for $0 < \varepsilon \leq \frac{r}{3}$ (Example 17, \cite{PWetal}).  Thus, $Cr(X) = \{\frac{r}{3}\}$.

There is a natural way to transfer properties between discrete chains and continuous paths.  The proofs of the statements leading up to Definition \ref{ehomomorph} may be found in \cite{W2}.

\begin{definition}
For a path $\gamma:[a,b] \rightarrow X$, a strong $\varepsilon$-chain along $\gamma$ is an $\varepsilon$-chain $\alpha = \{x_0,x_1,\dots,x_n\}$ with the following property: there exists a partition $a = t_0 < t_1 < \cdots < t_n = b$ of $[a,b]$ such that $\gamma(t_i) = x_i$ for each $i=0,1,\dots,n$ and $\gamma([t_{i-1},t_i]) \subset B_{\varepsilon}(\gamma(t_{i-1})) \cap B_{\varepsilon}(\gamma(t_i))$ for each $i=1,\dots,n$.
\end{definition}

\noindent Note that the reversal of a strong $\varepsilon$-chain along $\gamma$ is a strong $\varepsilon$-chain along $\gamma^{-1}$, and if $\alpha_1$ and $\alpha_2$ are strong $\varepsilon$-chains along paths $\gamma_1$ and $\gamma_2$, respectively, with the initial point of $\gamma_2$ equal to the terminal point of $\gamma_1$, then the concatenation $\alpha_1 \alpha_2$ is a strong $\varepsilon$-chain along $\gamma_1 \gamma_2$.  

A simple Lebesgue covering argument shows that there is a strong $\varepsilon$-chain along any path, and if $\gamma$ is any path, then any two strong $\varepsilon$-chains along $\gamma$ are $\varepsilon$-homotopic.  Moreover, if $\gamma$ and $\lambda$ are paths that are fixed endpoint path homotopic, then any strong $\varepsilon$-chain along $\gamma$ is $\varepsilon$-homotopic to any strong $\varepsilon$-chain along $\lambda$. These statements are not true for $\varepsilon$-chains along paths without the strong condition.  Taken together, these properties induce natural, well-defined, ``continuous to discrete'' homomorphisms from $\pi_1(X)$ to each $\varepsilon$-group, which are surjective when $X$ is geodesic.

\begin{definition}
\label{ehomomorph}
Fix a base point $* \in X$, and let $\pi_1(X,*)$ and $\pi_{\varepsilon}(X,*)$ be the fundamental and $\varepsilon$-groups, respectively, based at $*$.  For $\varepsilon > 0$, the $\varepsilon$-homomorphism is the map $h_{\varepsilon}:\pi_1(X,*) \rightarrow \pi_{\varepsilon}(X,*)$ taking $[\gamma] \in \pi_1(X,*)$ to the $\varepsilon$-equivalence class of strong $\varepsilon$-loops along $\gamma$.
\end{definition}

\noindent If $X$ is geodesic and $\alpha$ is an $\varepsilon$-loop at $*$, we can join each consecutive pair of points in $\alpha$ by a minimal geodesic, making $\alpha$ a strong $\varepsilon$-chain along the resulting broken geodesic path loop.  This is an \textit{$\varepsilon$-chording} of $\alpha$, and the homotopy class of the resulting path loop maps to $[\alpha]_{\varepsilon}$ under $h_{\varepsilon}$, making $h_{\varepsilon}$ surjective.  It is easy to see that the following commutes, where $\iota$ is the identity isomorphism.
\begin{equation}
\label{comm diagram 1}
\begin{diagram}
\pi_{\delta}(X,*) & \rTo_{\Phi_{\varepsilon \delta}} & \pi_{\varepsilon}(X,*) \\
\uTo^{h_{\delta}} & & \uTo_{h_{\varepsilon}} &  \\
\pi_1(X,*) & \rTo^{\iota} & \pi_1(X,*)
\end{diagram}
\end{equation}

The following definition and lemma (Definition 16 and Proposition 17 in \cite{PW1}) will be needed for some basic chain homotopy computations later on.
\begin{definition}
\label{chain gap def}
Let $X$ be a metric space and $\varepsilon > 0$.  Given an $\varepsilon$ chain in $X$, $\alpha=\{x_0,x_1,\dots,x_n\}$, define $E(\alpha)\defeq \min_{1 \leq i \leq n} \{\varepsilon - d(x_i,x_{i+1})\}$.  If $\alpha=\{x_0,x_1,\dots,x_n\}$ and $\beta=\{y_0,y_1,\dots,y_n\}$ are chains having the same number of points, define $\mathcal D(\alpha,\beta) \defeq \max_{0 \leq i \leq n} \{d(x_i,y_i)\}$.
\end{definition}

\begin{lemma}
\label{close chain lemma}
Let $\alpha$ be an $\varepsilon$-chain in a metric space $X$.  If $\beta$ is a chain with the same endpoints and same number of points as $\alpha$, and if $\mathcal D(\alpha,\beta) < \frac{1}{2}E(\alpha)$, then $\beta$ is an $\varepsilon$-chain that is $\varepsilon$-homotopic to $\alpha$.
\end{lemma}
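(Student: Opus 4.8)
\emph{Proof proposal.}
The plan is to get both conclusions — that $\beta$ is an $\varepsilon$-chain and that it is $\varepsilon$-homotopic to $\alpha$ — out of a single triangle-inequality estimate, and then to build an explicit $\varepsilon$-homotopy by swapping the points of $\alpha$ for those of $\beta$ one at a time, from left to right. Write $\alpha = \{x_0,\dots,x_n\}$ and $\beta = \{y_0,\dots,y_n\}$ with $x_0 = y_0$ and $x_n = y_n$, and abbreviate $E \defeq E(\alpha)$; by definition of $E$ every edge of $\alpha$ satisfies $d(x_{i-1},x_i) \le \varepsilon - E$, while the hypothesis $\mathcal D(\alpha,\beta) < \tfrac12 E$ gives $d(x_i,y_i) < \tfrac12 E$ for all $i$ (and, since ``same endpoints'' already forces $\alpha = \beta$ when $n \le 1$, we may assume $n \ge 2$). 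For $\beta$ itself the triangle inequality through $x_{i-1}$ and $x_i$ yields
\[
d(y_{i-1},y_i) \le d(y_{i-1},x_{i-1}) + d(x_{i-1},x_i) + d(x_i,y_i) < \tfrac12 E + (\varepsilon - E) + \tfrac12 E = \varepsilon,
\]
so $\beta$ is an $\varepsilon$-chain. The same bookkeeping with mixed endpoints, which I will need below, gives $d(y_{k-1},x_k) < \tfrac12 E + (\varepsilon - E) = \varepsilon - \tfrac12 E$, likewise $d(y_k,x_{k+1}) < \varepsilon - \tfrac12 E$, and trivially $d(x_k,y_k) < \tfrac12 E$ — all strictly below $\varepsilon$.

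For the homotopy, for $0 \le k \le n-1$ set $\gamma_k \defeq \{y_0,\dots,y_k,\,x_{k+1},\dots,x_n\}$; since $y_0 = x_0$ we have $\gamma_0 = \alpha$, and since $x_n = y_n$ we have $\gamma_{n-1} = \beta$. Each $\gamma_k$ is an $\varepsilon$-chain: its edges are the $\beta$-edges $y_{i-1}y_i$, the $\alpha$-edges $x_{i-1}x_i$, and the single transitional edge $y_k x_{k+1}$, and all of these were just bounded above by $\varepsilon$. To pass from $\gamma_{k-1}$ to $\gamma_k$ I would perform two basic moves: first insert $y_k$ immediately after $x_k$ — legal because the resulting chain $\{y_0,\dots,y_{k-1},x_k,y_k,x_{k+1},\dots,x_n\}$ differs from the $\varepsilon$-chain $\gamma_{k-1}$ only in replacing the edge $x_kx_{k+1}$ by the two edges $x_ky_k$ and $y_kx_{k+1}$, both of length $<\varepsilon$ — and then delete $x_k$, legal because the outcome is exactly $\gamma_k$, already known to be an $\varepsilon$-chain, and the endpoints are untouched throughout. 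Stringing together these $2(n-1)$ basic moves produces an $\varepsilon$-homotopy $\alpha = \gamma_0 \to \gamma_1 \to \cdots \to \gamma_{n-1} = \beta$.

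I do not expect a genuine obstacle: the whole content is the estimate $d(y_{i-1},y_i)<\varepsilon$ together with the observation that halving $E(\alpha)$ leaves exactly enough slack for the two transitional edges introduced by each insert-and-delete pair. The only points needing care are organizational — confirming that the first and last points never move (so that $\gamma_0$ and $\gamma_{n-1}$ really are $\alpha$ and $\beta$ and the ``endpoints fixed'' requirement of a basic move holds), checking that the intermediate chain carrying both $x_k$ and $y_k$ is itself an $\varepsilon$-chain (not merely $\gamma_{k-1}$ and $\gamma_k$), and disposing of the degenerate cases $n \le 1$, where $\alpha = \beta$ and there is nothing to prove.
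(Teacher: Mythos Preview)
Your proof is correct. Note, however, that the paper does not actually prove this lemma: it is quoted as Proposition 17 of \cite{PW1} and used as a black box, so there is no in-paper argument to compare against. That said, your insert-then-delete sweep is exactly the standard proof of this fact, and the estimate $d(y_{i-1},y_i) < \tfrac12 E + (\varepsilon - E) + \tfrac12 E = \varepsilon$ is the natural one; there is nothing missing or questionable in what you wrote.
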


The relations $\varphi_{\varepsilon \tau} = \varphi_{\varepsilon \delta}  \circ  \varphi_{\delta \tau}$ and $\Phi_{\varepsilon \tau} = \Phi_{\varepsilon \delta}  \circ  \Phi_{\delta \tau}$ for $\tau < \delta < \varepsilon$ imply that $\{X_{\varepsilon},\varphi_{\varepsilon \delta}\}$ and $\{\pi_{\varepsilon}(X),\Phi_{\varepsilon \delta}\}$ form inverse systems indexed by $\mathbb R_+$ with reverse order.  The \textit{uniform universal cover of $X$} (the \textit{UU-cover} for short) and \textit{uniform fundamental group of $X$} are the resultant inverse limits
\[\tilde{X}=\lim_{\leftarrow} X_{\varepsilon} \quad \text{and} \quad \Delta(X) = \lim_{\leftarrow} \pi_{\varepsilon}(X).\]
\noindent  The endpoint projection $\tilde{\varphi}:\tilde{X} \rightarrow X$ is surjective and continuous but is not typically a traditional cover; the fibers $\tilde{\varphi}^{-1}(x)$, for instance, are totally disconnected but not necessarily discrete.  However, $\tilde{X}$ is a generalized universal cover in the following senses: 1) (\textit{universality}) if $f:Y \rightarrow X$ is a cover then there is a unique, possibly generalized, cover $\tilde{f}:\tilde{X} \rightarrow Y$ such that $f \circ \tilde{f} = \tilde{\varphi}$; \ 2) (\textit{unique lifting}) paths and path homotopies lift uniquely into $\tilde{X}$; 3) (\textit{generalized regularity}) $\Delta(X)$ - which admits equivalent definitions as $\tilde{\varphi}^{-1}(*)$ and as the deck group of $\tilde{\varphi}:\tilde{X} \rightarrow X$ - acts prodiscretely on $\tilde{X}$, and $\tilde{X}/\Delta(X)$ is homeomorphic to $X$.  Thus, $\Delta(X)$ can be interpreted as a generalized fundamental group of $X$.

The following facts are in \cite{BPUU}. There is a canonical homomorphism $\Lambda:\pi_1(X) \rightarrow \Delta(X)$, mapping $[\gamma] \in \pi_1(X,*)$ to the endpoint of the lift of $\gamma$ at the identity in $\Delta(X) \subset \tilde{X}$.  While $\Lambda$ is surjective if and only if $\tilde{X}$ is path connected, $\Lambda(\pi_1(X))$ is always a dense, normal subgroup of $\Delta(X)$ when the latter is given the inverse limit topology, which is also the subspace topology it inherits from $\tilde{X}$. The kernel of $\Lambda$ contains those elements in $\pi_1(X)$ represented by loops that lift closed to $\tilde{X}$, or, equivalently, that lift closed to $X_{\varepsilon}$ for all $\varepsilon > 0$. That is, $ker \ \Lambda = \pi_{cl}(X)$; we will say more about this in Section 3. 

Now, if $X$ is compact and $Cr(X)$ is finite, then the covers $X_{\varepsilon}$ stabilize as $\varepsilon \searrow 0$.  Precisely, $\varphi_{\varepsilon \delta}:X_{\delta}\rightarrow X_{\varepsilon}$ and $\Phi_{\varepsilon \delta}:\pi_{\delta}(X) \rightarrow \pi_{\varepsilon}(X)$ are, respectively, covering equivalences and isomorphisms for all $0 < \delta < \varepsilon \leq \min Cr(X)$.  In this case, for each $0 < \varepsilon \leq \min Cr(X)$, $\tilde{X}$ is equivalent to the traditional cover $X_{\varepsilon}$ with deck group $\Delta(X) \cong \pi_{\varepsilon}(X)$.  The universality of $\tilde{X}$ thus implies that it - as well as each $X_{\varepsilon}$, $0 < \varepsilon \leq \min Cr(X)$ - is a traditional universal cover.  This is precisely the idea Sormani-Wei used to prove that $X$ has a universal cover when $CovSpec(X)=\frac{3}{2}Cr(X)$ is finite, though they did so without reference to the UU-cover or any generalized universal cover.  Of course, they also proved the converse; a universal cover requires that the spectra be finite, and the preceding scenario still holds.

\begin{remark}
We have changed notation slightly from \cite{BPUU}.  In \emph{\cite{BPUU}}, $\Lambda$ and the uniform fundamental group are denoted, respectively, by $\lambda$ and $\delta_1(X)$, but we will use $\lambda$ and $\delta$ differently.
\end{remark}

Finally, when $X$ is compact geodesic, Brodskiy, Dydak, Labuz, and Mitra showed (Corollary 6.5, \cite{BD1}) that the uniform fundamental group is isomorphic to \textit{the first shape group of $X$}, $\breve{\pi}_1(X)$.  We will suppress the formal and rather technical definition of $\breve{\pi}_1(X)$ (cf. \cite{Dydak}), since it will play no role in this paper.  Like $\Delta(X) = \lim_{\leftarrow} \pi_{\varepsilon}(X)$, $\breve{\pi}_1(X)$ is an inverse limit of coarse approximations to the fundamental group.  Very roughly, if $|N(\mathcal U)|$ is the geometric realization of the nerve of an open covering, $\mathcal U$, and if $\mathcal V$ is a refinement of $\mathcal U$, then there is a bonding homomorphism $\rho_{\mathcal U \mathcal V}:\pi_1(|N(\mathcal U)|) \rightarrow \pi_1(|N(\mathcal V)|)$.  Then $\breve{\pi}_1(X)$ is the resultant inverse limit $\lim_{\leftarrow} \pi_1(|N(\mathcal U)|)$, taken over a cofinal directed set consisting of open coverings admitting subordinated partitions of unity.  Like finding conditions under which $\pi_{cl}(X)$ is trivial in the geodesic case, determining when the fundamental group isomorphically injects into the first shape group is a broad area of interest in general topology (cf. \cite{FZ}, \cite{FZ2}).

\vspace{.1 in}
\section{The Revised Fundamental Group}

Much of this section will hold for general geodesic spaces, since many of the results will not involve our compact-dependent definition of a critical value.  Recall that open metric balls of radius $r$ and center $x$ are denoted $B_r(x)$.  To distinguish discrete chains from continuous paths, we will exclusively use `path' (resp. `path loop') to denote continuous curves (resp. closed curves).  A path loop, $\gamma:[a,b] \rightarrow X$ is said to be \textit{based at $x$} if $\gamma(a)=\gamma(b)=x$.  We do not discuss free homotopies in the present work, so to say that two path loops based at a point $x$ are homotopic means that they are fixed endpoint path homotopic, or homotopic rel $x$.  If groups $G$ and $H$ are isomorphic, we will denote this by $G\cong H$.

Let $X$ be a geodesic space with base point $*$, and let $\{X_{\varepsilon}\}$ be the $\varepsilon$-covers determined by this base point.  Recalling Definition \ref{ehomomorph}, we denote the subgroup $ker \ h_{\varepsilon} \unlhd \pi_1(X,*)$ by $K_{\varepsilon}$, or $K_{\varepsilon}(*)$ when the base point needs to be emphasized.  Commutative diagram (\ref{comm diagram 1}) shows immediately that these kernels form a decreasing, nested set of normal subgroups of $\pi_1(X,*)$ in the following sense: if $0 < \delta < \varepsilon$, then $K_{\delta} \subseteq K_{\varepsilon}$.  We thus define the normal subgroup
\[\pi_{cl}(X,*) \defeq \bigcap_{\varepsilon > 0} K_{\varepsilon} \ \unlhd \ \pi_1(X,*),\]
\noindent and, for reasons which will soon become clear, we call $\pi_{cl}(X,*)$ the \textit{closed lifting group} of $X$ at $*$.  When $\pi_{cl}(X,*)$ is the trivial subgroup of $\pi_1(X,*)$ consisting of just the identity, we denote this by $\pi_{cl}(X,*) = 1$.

\begin{definition}
\label{revised fg}
The \textit{revised fundamental group} of $X$ at $*$ is defined to be the quotient group $\bar{\pi}_1(X,*) \defeq \pi_1(X,*)/\pi_{cl}(X,*)$, and we denote the standard quotient homomorphism by $\bar{h}:\pi_1(X,*) \rightarrow \bar{\pi}_1(X,*)$.
\end{definition}

\noindent We usually suppress the base point when it is clear, and we show below that it is, in fact, immaterial.  We will routinely express elements of $\bar{\pi}_1(X)$ as left cosets, $[\gamma]\pi_{cl}(X)$.  The only scenario where confusion might occur is when we need to distinguish the subset $[\gamma]\pi_{cl}(X) \subset \pi_1(X)$ from the corresponding element in $\bar{\pi}_1(X)$.  The context, however, should always make the usage clear.

We can connect the fundamental group, revised fundamental group, and $\varepsilon$-groups by a commutative diagram.  We use a standard generalization of the First Isomorphism Theorem, namely the following: if $f:G \rightarrow H$ is a homomorphism, not necessarily surjective, and if $ker \ f$ contains a normal subgroup, say $N$, then there is a unique homomorphism $h:G/N \rightarrow H$ such that $h \circ q = f$, where $q:G \rightarrow G/N$ is the quotient homomorphism. The map $h$ is defined by $h(gN) = f(g)$.  We will use this result several times.

For any $\varepsilon > 0$, $K_{\varepsilon}$ contains $\pi_{cl}(X)$.  Thus, there is a unique homomorphism $\bar{h}_{\varepsilon}:\bar{\pi}_1(X) \rightarrow \pi_{\varepsilon}(X)$ which takes $[\gamma]\pi_{cl}(X)$ to $h_{\varepsilon}([\gamma])$ and is such that the following diagram commutes.
\begin{equation}
\label{comm diagram 2}
\begin{diagram}
\pi_1(X) & \rTo_{h_{\varepsilon}} & \pi_{\varepsilon}(X) \\
\dTo_{\bar{h}} & \ruTo^{\bar{h}_{\varepsilon}} &  \\
 \bar{\pi}_1(X) &
\end{diagram}
\end{equation}
\noindent Note that $ker \ \bar{h}_{\varepsilon}=\bar{h}(K_{\varepsilon})$, and since $h_{\varepsilon}$ is surjective, $\bar{h}_{\varepsilon}$ is surjective, also.

Let $\gamma:[a,b] \rightarrow X$ be a path in $X$ beginning at $*$, and let $\tilde{\gamma}$ denote its unique lift to the identity $\tilde{*} \in \pi_{\varepsilon}(X) \subset X_{\varepsilon}$.  It follows from Proposition 21 in \cite{PW1} that the endpoint of $\tilde{\gamma}$ is the equivalence class of strong $\varepsilon$-chains along $\gamma$.  Thus, if $\gamma$ is a path loop then the endpoint of $\tilde{\gamma}$ is $h_{\varepsilon}([\gamma])$, which is trivial if and only if $[\gamma] \in K_{\varepsilon}$. Hence, we have

\begin{lemma}
\label{charac lifts}
Let $X$ be a geodesic space, and let $\varepsilon > 0$ be given.  If $\gamma:[a,b] \rightarrow X$ is a path loop at $* \in X$, then $\tilde{\gamma}$ is a path loop at $\tilde{*} \in X_{\varepsilon}$ if and only if some \emph{(}equivalently, every\emph{)} strong $\varepsilon$-chain along $\gamma$ is $\varepsilon$-null.  That is, $\gamma$ lifts closed to $\tilde{*} \in X_{\varepsilon}$ if and only if $[\gamma] \in K_{\varepsilon}$, and $K_{\varepsilon}$ is the covering group of $X_{\varepsilon}$.
\end{lemma}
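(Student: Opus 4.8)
The plan is to prove the two claims in Lemma \ref{charac lifts} in sequence: first the lifting characterization (the endpoint of $\tilde\gamma$ is a loop iff $[\gamma]\in K_\varepsilon$), then the identification of $K_\varepsilon$ as the covering group of $X_\varepsilon$. The first claim is essentially immediate from the machinery already assembled in the excerpt, so the bulk of the work is bookkeeping; the second claim requires invoking the regularity of the covering $\varphi_\varepsilon:X_\varepsilon\to X$ and relating its covering group to the kernel $K_\varepsilon$ of the $\varepsilon$-homomorphism.

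For the first claim, I would start from the cited fact (Proposition 21 of \cite{PW1}, as recalled just before the lemma statement): the unique lift $\tilde\gamma$ of a path $\gamma$ starting at $*$ to the basepoint $\tilde* = [\{*\}]_\varepsilon \in X_\varepsilon$ has endpoint equal to the $\varepsilon$-equivalence class of any strong $\varepsilon$-chain along $\gamma$. Now specialize to a path loop $\gamma$ at $*$: the endpoint of $\tilde\gamma$ is $[\alpha]_\varepsilon$ for a strong $\varepsilon$-chain $\alpha$ along $\gamma$, which by Definition \ref{ehomomorph} is exactly $h_\varepsilon([\gamma])$. By unique path lifting, $\tilde\gamma$ is a loop at $\tilde*$ if and only if its endpoint equals its initial point $\tilde*$, i.e. if and only if $h_\varepsilon([\gamma]) = [\{*\}]_\varepsilon$, the identity of $\pi_\varepsilon(X)$. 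That is precisely the condition $[\gamma]\in \ker h_\varepsilon = K_\varepsilon$. The equivalence "some strong $\varepsilon$-chain along $\gamma$ is $\varepsilon$-null iff every one is" is then just the well-definedness of $h_\varepsilon$ — any two strong $\varepsilon$-chains along the same path are $\varepsilon$-homotopic, as stated in the paragraph preceding Definition \ref{ehomomorph} — so the "(equivalently, every)" clause costs nothing.

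For the second claim, I would recall from Section \ref{appendix} that $\varphi_\varepsilon:X_\varepsilon\to X$ is a \emph{regular} metric covering map with deck group $\pi_\varepsilon(X)$, so that $X_\varepsilon$ is the cover of $X$ corresponding to the normal subgroup $(\varphi_\varepsilon)_*(\pi_1(X_\varepsilon,\tilde*)) \unlhd \pi_1(X,*)$ — the "covering group" of $X_\varepsilon$ in the usual sense. The content is then that this covering group equals $K_\varepsilon$. But $[\gamma]$ lies in $(\varphi_\varepsilon)_*(\pi_1(X_\varepsilon,\tilde*))$ exactly when $\gamma$ lifts to a loop at $\tilde*$ (standard covering space theory, using that $X$ is geodesic hence locally path connected so lifting criteria apply), and by the first claim this happens exactly when $[\gamma]\in K_\varepsilon$. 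Hence the covering group of $X_\varepsilon$ is $K_\varepsilon$. One should note in passing that, since $h_\varepsilon:\pi_1(X,*)\to\pi_\varepsilon(X,*)$ is surjective (the $\varepsilon$-chording construction in the excerpt gives surjectivity), $\pi_1(X)/K_\varepsilon \cong \pi_\varepsilon(X)$, which is consistent with $\pi_\varepsilon(X)$ being the deck group of a regular cover with covering group $K_\varepsilon$.

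**The main obstacle** is not any deep computation but the careful interfacing of three descriptions of "the same" object — the endpoint of the lift $\tilde\gamma$, the class $h_\varepsilon([\gamma])$ built from strong $\varepsilon$-chains, and membership in the covering group of $X_\varepsilon$ — and making sure the cited external results (Proposition 21 of \cite{PW1} for the endpoint-of-lift identification, and the regularity/deck-group statements from Section \ref{appendix}) are applied with the correct basepoint conventions, namely the convention $\tilde* = [\{*\}]_\varepsilon$ fixed in the background section. Once those identifications are lined up, both statements follow formally; no genuinely new estimate (e.g. nothing like Lemma \ref{close chain lemma}) is needed here.
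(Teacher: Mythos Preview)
Your proposal is correct and follows essentially the same approach as the paper: the paper's proof (which actually appears in the paragraph \emph{preceding} the lemma statement) invokes Proposition 21 of \cite{PW1} to identify the endpoint of $\tilde\gamma$ with $h_\varepsilon([\gamma])$ and reads off the conclusion, exactly as you do. Your treatment of the covering-group identification is more explicit than the paper's (which simply asserts it), but the underlying argument is the same standard covering-space bookkeeping you describe.
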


\noindent It follows from the regularity of $\varphi_{\varepsilon}:X_{\varepsilon} \rightarrow X$ that the lifts to $X_{\varepsilon}$ of a path loop based at any point in $X$ are either all closed or all open, regardless of the point in the appropriate preimage to which it is lifted.  Hence, if $\gamma$ is a path loop based at $x$, there is no ambiguity in simply stating that $\gamma$ lifts closed or open to $X_{\varepsilon}$ without specifying the particular point in $\varphi_{\varepsilon}^{-1}(x)$ to which it is lifted.

Lemma \ref{charac lifts} also gives us another useful and familiar interpretation of a critical value.  Suppose $X$ is compact and $\varepsilon > 0$ is a critical value of $X$.  Then there is a nontrivial $\varepsilon$-loop, $\alpha$, at $*$ that is $\delta$-null for all $\delta > \varepsilon$.  Let $\gamma$ be an $\varepsilon$-chording of $\alpha$, and note that $\alpha$ is also a strong $\delta$-loop along $\gamma$ for all $\delta > \varepsilon$.  It follows, then, that $\gamma$ lifts open to $X_{\varepsilon}$ since $\alpha$ is $\varepsilon$-nontrivial, but closed to $X_{\delta}$ for all $\delta > \varepsilon$ since $\alpha$ is $\delta$-null.  That is, there is a path loop at $*$ that lifts open to $X_{\varepsilon}$ but closed to $X_{\delta}$ for all $\delta > \varepsilon$.  This is precisely how one characterizes elements of the covering spectrum of Sormani-Wei, and we see that it is a consequence of the more general discrete formulation.

\begin{corollary}
\label{loop in a ball}
Let $X$ be geodesic with base point $*$.  If $\gamma$ is a path loop based at $x \in X$ and lying in an open ball of radius $\varepsilon$ not necessarily centered at $x$, then $\gamma$ lifts closed to $X_{\varepsilon}$.
\end{corollary}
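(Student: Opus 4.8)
The plan is to reduce everything to Lemma~\ref{charac lifts}. By the base-point independence of the $\varepsilon$-cover construction recalled in Section~\ref{appendix}, that lemma has an evident analog with $*$ replaced by $x$: a path loop $\gamma$ based at $x$ lifts closed to $X_\varepsilon$ if and only if some (equivalently every) strong $\varepsilon$-chain along $\gamma$ is $\varepsilon$-null. So it suffices to exhibit one strong $\varepsilon$-chain along $\gamma$ together with an $\varepsilon$-homotopy from it to the trivial loop $\{x\}$.

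Concretely, I would write $\gamma:[a,b]\to X$ with $\gamma(a)=\gamma(b)=x$ and $\gamma([a,b])\subseteq B_\varepsilon(p)$, and invoke the Lebesgue-covering argument from Section~\ref{appendix} to choose a strong $\varepsilon$-chain $\alpha=\{x=x_0,x_1,\dots,x_n=x\}$ along $\gamma$. The only feature of $\alpha$ I need is that every vertex lies in $B_\varepsilon(p)$, hence $d(x_i,p)<\varepsilon$ for all $i$. Now carry out the $\varepsilon$-homotopy: insert $p$ between $x_0$ and $x_1$ (a legal basic move, since $d(x_0,p)<\varepsilon$ and $d(p,x_1)<\varepsilon$); then delete $x_1,\dots,x_{n-1}$ one vertex at a time, each deletion legal because $d(p,x_j)<\varepsilon$; this leaves the backtracking chain $\{x,p,x\}$, which is $\varepsilon$-homotopic to $\{x\}$ (remove $p$, then collapse the resulting constant chain). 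Thus $\alpha$ is $\varepsilon$-null and $\gamma$ lifts closed to $X_\varepsilon$.

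The one place that requires a little care, and which I regard as the main (if minor) obstacle, is the base-point bookkeeping: Lemma~\ref{charac lifts} and the term ``$\varepsilon$-null'' are phrased relative to the fixed base point $*$, while $\gamma$ is based at an arbitrary point $x$, and the conjugate of $\gamma$ by a path from $*$ to $x$ need not lie in $B_\varepsilon(p)$, so one cannot naively conjugate $\gamma$ into a loop at $*$ and feed it to the lemma. I would resolve this by appealing directly to base-point independence: the $\varepsilon$-cover built from the base point $x$ is isometrically covering-equivalent to $X_\varepsilon$, and whether a given path loop lifts closed is a property of the loop together with the covering space, not of the auxiliary base point used to build it. With that observation in hand the argument above goes through verbatim. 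Apart from this, the proof is the discrete counterpart of the classical fact that a loop contained in a sufficiently small (here, radius-$\varepsilon$) set lifts to a loop, and I do not anticipate further difficulties.
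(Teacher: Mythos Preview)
Your argument is correct and the core $\varepsilon$-homotopy is exactly the paper's: both use the ball center as a pivot, inserting it into the strong $\varepsilon$-chain and then deleting the intermediate vertices one by one (the paper phrases this as conjugating by the two-point chain $\{\bar{x},x\}$ and then stripping off $x_0,\dots,x_n$, which is the same sequence of basic moves). The only genuine difference is the base-point bookkeeping you flagged: the paper avoids invoking base-point independence at this stage by instead choosing a path $\lambda$ from $*$ to $x$, taking a strong $\varepsilon$-chain $\sigma$ along it, and observing that $\sigma\beta\sigma^{-1}$ is a strong $\varepsilon$-loop along $\lambda\gamma\lambda^{-1}$ that is $\varepsilon$-null, so Lemma~\ref{charac lifts} applies directly at $*$. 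Your route via the covering equivalence of Section~\ref{appendix} is also valid, but the paper's explicit conjugation is a bit more self-contained, since the formal statement that closed lifting is base-point independent (Lemma~\ref{first bp indep}) is only proved later and in fact uses this corollary implicitly through the lollipop discussion.
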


\begin{proof}
Suppose $\gamma$ lies in $B_{\varepsilon}(\bar{x})$. Choose a strong $\varepsilon$-loop along $\gamma$, say $\beta = \{x=x_0,x_1,\dots,x_n=x\}$, and let $\alpha$ be the 2-point $\varepsilon$-chain $\{\bar{x},x\}$.  Then $\alpha \beta \alpha^{-1}$ is an $\varepsilon$-loop, and note that $\beta$ is $\varepsilon$-null if and only if $\alpha \beta \alpha^{-1}$ is.  But, letting $\overset{\varepsilon}\sim$ denote the relation ``$\varepsilon$-homotopic,'' we have
\[\alpha \beta \alpha^{-1} \overset{\varepsilon}\sim \{\bar{x},x=x_0,x_1,\dots,x_{n-1},x_n=x,\bar{x}\}.\]
\noindent Since $x_i \in B_{\varepsilon}(\bar{x})$ for each $i=0,1,\dots,n$, we can successively remove $x_0$, then $x_1$, and so on, reducing $\alpha \beta \alpha^{-1}$ via $\varepsilon$-homotopy to the trivial chain $\{\bar{x},\bar{x}\}$. Thus, $\beta$ is $\varepsilon$-null.

Finally, let $\lambda$ be a path from $*$ to $x$, and choose a strong $\varepsilon$-chain, $\sigma$, along $\lambda$.  Then $\sigma \beta \sigma^{-1}$ is a strong $\varepsilon$-chain along $\lambda \gamma \lambda^{-1}$ that is $\varepsilon$-null.  By Lemma \ref{charac lifts}, this means that $\lambda \gamma \lambda^{-1}$ lifts closed to $\tilde{*} \in X_{\varepsilon}$, which, by uniqueness of path lifts, can only hold if $\gamma$ lifts closed, also.\end{proof}

\vspace{.1 in}
\noindent The author has shown that the previous corollary can be strengthened to path loops lying in balls of radius $\frac{3\varepsilon}{2}$ (\cite{W2}).  The proof is more technical, however, and we will not need that stronger result.

We include the next definition and lemma for convenient reference, since they will be used several times.  The lemma is an immediate consequence of Lemma \ref{charac lifts} and Corollary 52 in \cite{PW2}.

\begin{definition}
A lollipop at $x \in X$ is a path loop of the form $\alpha \beta \alpha^{-1}$, where $\alpha$ is a path from $x$ to a point $y$ and $\beta$ is a path loop at $y$.  If $\beta$ - which we call the head of the lollipop - lies in a ball of radius $\varepsilon$, then we call $\alpha \beta \alpha^{-1}$ an $\varepsilon$-lollipop.  If a path loop $\gamma$ is homotopic to a product of $\varepsilon$-lollipops, $\alpha_1 \beta_1 \alpha_1^{-1} \cdots \alpha_m \beta_m \alpha_m^{-1}$, we call this an $\varepsilon$-lollipop factorization of $[\gamma]$.
\end{definition}

\begin{lemma}
\label{geodesic loop reduction}
Let $X$ be a geodesic space.  If $[\gamma] \in \pi_1(X,*)$ is in $K_{\varepsilon}$, then $\gamma$ is homotopic to a finite product of $\frac{3\varepsilon}{2}$-lollipops.
\end{lemma}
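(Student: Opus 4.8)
The plan is to extract a concrete $\varepsilon$-nullhomotopy from the hypothesis and then convert its combinatorial data into a continuous lollipop factorization by replacing every $\varepsilon$-chain in sight with its broken-geodesic chording. By Lemma~\ref{charac lifts}, $[\gamma]\in K_\varepsilon$ means precisely that every strong $\varepsilon$-chain $\alpha=\{*=x_0,x_1,\dots,x_n=*\}$ along $\gamma$ (with associated partition $a=t_0<\dots<t_n=b$) is $\varepsilon$-null; fix one, and fix an $\varepsilon$-homotopy $H=\{\alpha=\gamma_0,\gamma_1,\dots,\gamma_k=\{*\}\}$ in which each $\gamma_j$ comes from $\gamma_{j-1}$ by a single basic move. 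The idea is that $\gamma$ and an $\varepsilon$-chording $c$ of $\alpha$ differ, up to homotopy, by a product of lollipops whose heads come from the strong-chain condition, while $c$ and the constant loop differ by a product of lollipops whose heads come from the basic moves of $H$; keeping track of the radii of the balls containing these heads is what produces the constant $\tfrac{3}{2}$.

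Concretely I would proceed in three steps. \emph{Step 1.} Fix an $\varepsilon$-chording $c=g_1g_2\cdots g_n$ of $\alpha$, with $g_i$ a minimal geodesic from $x_{i-1}$ to $x_i$, and write $[\gamma]=[\gamma c^{-1}]\,[c]$. \emph{Step 2} handles $[\gamma c^{-1}]$: split $\gamma c^{-1}$ over the partition in the standard way, so that it is homotopic to $\prod_{i=1}^n \lambda_i\sigma_i\lambda_i^{-1}$, where $\sigma_i$ is the loop at $x_{i-1}$ traversing $\gamma|_{[t_{i-1},t_i]}$ then $g_i^{-1}$ and $\lambda_i=g_1\cdots g_{i-1}$ runs from $*$ to $x_{i-1}$; since $\gamma([t_{i-1},t_i])\subset B_\varepsilon(x_{i-1})$ and $g_i\subset B_\varepsilon(x_{i-1})$, each $\sigma_i\subset B_\varepsilon(x_{i-1})$, so these are $\varepsilon$-lollipops (and this step uses only that $\alpha$ is a strong chain along $\gamma$, not the hypothesis). \emph{Step 3} handles $[c]$: choose $\varepsilon$-chordings $c_j$ of the $\gamma_j$ agreeing on unchanged segments, with $c_0=c$ and $c_k$ constant, and telescope, $[c]=[c_0]=\prod_{j=1}^k[c_{j-1}c_j^{-1}]$. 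Because $\gamma_{j-1}$ and $\gamma_j$ differ by one basic move, $c_{j-1}$ and $c_j$ differ only in the geodesic segments incident to the inserted or deleted vertex, and $c_{j-1}c_j^{-1}$ is homotopic to a lollipop whose head is the geodesic triangle on the three points $u,v,w$ of that move; since $d(u,v),d(v,w),d(u,w)<\varepsilon$, the sides $[u,v]$ and $[u,w]$ lie in $B_\varepsilon(u)$ and the side $[v,w]$ lies in $B_{3\varepsilon/2}(u)$ (each of its points is within $\tfrac{\varepsilon}{2}$ of $v$ or of $w$, as the two subsegments have lengths summing to $d(v,w)<\varepsilon$), so the head lies in $B_{3\varepsilon/2}(u)$. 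Concatenating Steps 2 and 3 writes $\gamma$, up to homotopy, as a finite product of $\tfrac{3\varepsilon}{2}$-lollipops.

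Two points require care. The source of the radius $\tfrac{3\varepsilon}{2}$ is the estimate in Step 3: the plain triangle inequality only confines the triangle to $B_{2\varepsilon}(u)$, and one genuinely needs the observation that a point of a geodesic splits it into subsegments of total length $<\varepsilon$ to sharpen this to $B_{3\varepsilon/2}(u)$ --- this is exactly where the constant $\tfrac{3}{2}$, familiar from $3Cr(X)=2CovSpec(X)$, enters. The main technical obstacle, though, is the bookkeeping in Step 3: choosing the $c_j$ consistently enough that consecutive ones differ only near the altered vertex, and attaching a definite stem from $*$ to that vertex so that the telescoped expression is literally a product of based lollipops rather than a product of conjugates modulo reassociation. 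A minor but real subtlety is that $\gamma$ need not be homotopic to its chording $c$ (they only have the same image under $h_\varepsilon$), which is why Step 1 keeps the correction term $[\gamma c^{-1}]$ and disposes of it separately. With this in place the lemma is ``immediate'' as claimed, and indeed Corollary~52 of \cite{PW2} packages exactly this chording computation, so one could instead simply invoke Lemma~\ref{charac lifts} followed by that corollary.
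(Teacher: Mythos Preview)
Your proposal is correct and is precisely the argument the paper has in mind: the paper does not give an independent proof but simply states that the lemma ``is an immediate consequence of Lemma~\ref{charac lifts} and Corollary~52 in \cite{PW2},'' and your Steps~1--3 are exactly the chording-and-telescoping computation that corollary packages (as you yourself observe in your final sentence). Your identification of the source of the constant $\tfrac{3}{2}$---the geodesic triangle $uvw$ from a basic move fitting in $B_{3\varepsilon/2}(u)$ via the midpoint splitting of the side $[v,w]$---is also right on the mark.
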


\noindent It follows from the uniqueness of path lifts to covering spaces that a lollipop $\alpha \beta \alpha^{-1}$ lifts closed to a regular cover if and only if $\beta$ lifts closed.  Combining this with Corollary \ref{loop in a ball}, we obtain

\begin{corollary}
\label{lollipop products}
If $\gamma = \alpha_1 \beta_1 \alpha_1^{-1} \cdots \alpha_n \beta_n \alpha_n^{-1}$ is a product of $\varepsilon$-lollipops in a geodesic space $X$, then $\gamma$ lifts closed to $X_{\varepsilon}$.
\end{corollary}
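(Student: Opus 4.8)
The plan is to reduce the statement to the elementary fact that the path loops at a fixed base point which lift closed to $X_{\varepsilon}$ form a subgroup of the fundamental group, using Corollary \ref{loop in a ball} to dispatch the \emph{heads} of the lollipops and the observation immediately preceding this corollary (that a lollipop $\alpha\beta\alpha^{-1}$ lifts closed to a regular cover if and only if its head $\beta$ does) to pass from a head to its lollipop. First I would note that, by Lemma \ref{charac lifts} together with the regularity of $\varphi_{\varepsilon}$, whether a path loop lifts closed to $X_{\varepsilon}$ depends only on its homotopy class; hence I may work with the concatenation $\gamma = \alpha_1\beta_1\alpha_1^{-1}\cdots\alpha_n\beta_n\alpha_n^{-1}$ itself, and by the definition of an $\varepsilon$-lollipop factorization every lollipop $\alpha_i\beta_i\alpha_i^{-1}$ is based at the common base point of $\gamma$, say $x$.

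Next I would treat a single index $i$. By hypothesis the head $\beta_i$ is a path loop lying in a ball of radius $\varepsilon$, so Corollary \ref{loop in a ball} gives that $\beta_i$ lifts closed to $X_{\varepsilon}$. Since $\varphi_{\varepsilon}:X_{\varepsilon}\to X$ is a regular (metric) cover, the observation recalled above applies: the $\varepsilon$-lollipop $\alpha_i\beta_i\alpha_i^{-1}$ lifts closed to $X_{\varepsilon}$ if and only if $\beta_i$ does. Consequently each $\alpha_i\beta_i\alpha_i^{-1}$ lifts closed to $X_{\varepsilon}$.

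Finally, the homotopy classes at $x$ of loops lifting closed to $X_{\varepsilon}$ constitute the covering subgroup of $\varphi_{\varepsilon}$ over $x$ (this is precisely $K_{\varepsilon}\unlhd\pi_1(X,*)$ when $x=*$, by Lemma \ref{charac lifts}), and by regularity this subgroup is independent of the point of $\varphi_{\varepsilon}^{-1}(x)$ to which one lifts. Since each $[\alpha_i\beta_i\alpha_i^{-1}]$ lies in this subgroup, so does the product $[\gamma]=[\alpha_1\beta_1\alpha_1^{-1}]\cdots[\alpha_n\beta_n\alpha_n^{-1}]$, whence $\gamma$ lifts closed to $X_{\varepsilon}$. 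The argument is short and I do not expect a genuine obstacle; the only point requiring care is the base-point bookkeeping, namely verifying that all the lollipops in a given factorization share $\gamma$'s base point so that their classes can legitimately be multiplied inside one fundamental group.
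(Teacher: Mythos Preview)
Your proposal is correct and follows essentially the same approach as the paper, which simply states that the corollary is obtained by combining Corollary~\ref{loop in a ball} with the observation that a lollipop $\alpha\beta\alpha^{-1}$ lifts closed to a regular cover if and only if its head $\beta$ does. Your version spells out the subgroup argument for the product explicitly, which the paper leaves implicit.
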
  

\begin{remark}
\label{SW connection}
Lemma \emph{\ref{geodesic loop reduction}} and the statement immediately following the proof of Corollary \emph{\ref{loop in a ball}} are the means by which one shows that $\frac{3}{2}Cr(X) = CovSpec(X)$.  In fact, those results, together, show that $K_{\varepsilon} = \pi^{3\varepsilon/2}(X,*)$, where $\pi^{\delta}(X,*) \subset \pi_1(X,*)$ is the subgroup generated by $\delta$-lollipops.  These are precisely the covering groups of the $\delta$-covers, $X^{\delta}$, used by Sormani and Wei to define their covering spectrum \emph{(}Definition 2.3, \cite{SW2}\emph{)}.  With this fact, it is straightforward to show that $X_{\varepsilon}$ is isometricaly equivalent to $X^{3\varepsilon/2}$, and the equality $\frac{3}{2}Cr(X) = CovSpec(X)$ then follows immediately.  See \cite{PW2} for details.
\end{remark}

\begin{proposition}
\label{characterize k0}
If $X$ is geodesic, then $[\gamma] \in \pi_{cl}(X,*)$ if and only if $\gamma$ lifts closed to $X_{\varepsilon}$ for all $\varepsilon > 0$.  If $X$ is also compact, then $[\gamma] \in \pi_{cl}(X,*)$ if and only if $\gamma$ lifts closed to all connected covers of $X$.
\end{proposition}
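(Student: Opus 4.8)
The plan is to bootstrap the second (compact) statement from the first one and from the domination fact recalled in Section \ref{appendix}: for compact geodesic $X$, every connected cover is covered by $X_\varepsilon$ once $\varepsilon$ is small enough. The first statement is essentially a reformulation of Lemma \ref{charac lifts}: since $\pi_{cl}(X,*)=\bigcap_{\varepsilon>0}K_\varepsilon$ and $[\gamma]\in K_\varepsilon$ precisely when $\gamma$ lifts closed to $X_\varepsilon$, a class lies in $\pi_{cl}(X,*)$ if and only if $\gamma$ lifts closed to every $X_\varepsilon$. I would dispatch this in a line and then turn to the ``connected covers'' equivalence, for which compactness is used.

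For the backward implication I would observe that each $X_\varepsilon$ is itself a connected cover of $X$ (it is path connected, since any class $[\alpha]_\varepsilon$ is joined to $\tilde *$ by the lift of an $\varepsilon$-chording of $\alpha$). Hence a loop lifting closed to every connected cover lifts closed to each $X_\varepsilon$, and the first statement finishes it. For the forward implication, given $[\gamma]\in\pi_{cl}(X,*)$ and a connected cover $f:Y\to X$, I would choose $\varepsilon>0$ small enough that there is a covering map $h:X_\varepsilon\to Y$ with $f\circ h=\varphi_\varepsilon$. Passing to induced homomorphisms on fundamental groups and using that $K_\varepsilon$ is exactly the characteristic subgroup $(\varphi_\varepsilon)_\ast\pi_1(X_\varepsilon,\tilde *)$ (Lemma \ref{charac lifts}), one gets $K_\varepsilon=f_\ast\bigl(h_\ast\pi_1(X_\varepsilon,\tilde *)\bigr)\subseteq f_\ast\pi_1\bigl(Y,h(\tilde *)\bigr)$. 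Since $\pi_{cl}(X,*)\subseteq K_\varepsilon$, the class $[\gamma]$ lies in the characteristic subgroup of $Y$ at $h(\tilde *)$, which by standard covering-space theory means exactly that $\gamma$ lifts closed at that point of $f^{-1}(*)$. Equivalently and more concretely, the lift $\tilde\gamma$ of $\gamma$ to $\tilde *\in X_\varepsilon$ is a loop because $[\gamma]\in K_\varepsilon$, so $h\circ\tilde\gamma$ is a loop in $Y$ projecting to $\gamma$.

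To upgrade ``lifts closed at one point of $f^{-1}(*)$'' to ``lifts closed at every point'' — the relevant reading when $Y$ is not a regular cover — I would invoke that the characteristic subgroups of $Y$ at different points of $f^{-1}(*)$ are conjugate in $\pi_1(X,*)$, whereas $K_\varepsilon$, being a kernel, is normal; hence $K_\varepsilon$, and a fortiori $[\gamma]$, lies in all of those conjugate subgroups simultaneously.

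The only genuine input beyond routine covering-space bookkeeping is the domination $X_\varepsilon\to Y$ for small $\varepsilon$, which is where compactness of $X$ enters (via the results recalled in Section \ref{appendix}). I expect the sole minor obstacle to be the basepoint/conjugacy housekeeping for non-regular $Y$, which is handled by the normality of $K_\varepsilon$ just noted; everything else is a direct translation between closed lifts and characteristic subgroups.
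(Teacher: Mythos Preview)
Your proposal is correct and follows essentially the same route as the paper: the first statement is immediate from Lemma \ref{charac lifts} and the definition $\pi_{cl}(X,*)=\bigcap_{\varepsilon>0}K_\varepsilon$, and the second is obtained from the first via the domination fact that every connected cover of compact geodesic $X$ is covered by some $X_\varepsilon$. The paper's proof is a two-line sketch that leaves implicit the details you spell out; in particular, your conjugacy/normality remark handling non-regular $Y$ is a useful clarification that the paper does not make explicit.
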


\begin{proof}
The first statement follows directly from Lemma \ref{charac lifts} and the definition of $\pi_{cl}(X)$.  The second statement follows from the first and the fact that, in the compact case, every connected cover of $X$ is covered by $X_{\varepsilon}$ for some sufficiently small $\varepsilon$.\end{proof}

\vspace{.1 in}
Thus, $\pi_{cl}(X)$ is precisely equal to the kernel of $\Lambda:\pi_1(X) \rightarrow \Delta(X)$, and we now see why $\pi_{cl}(X)$ is called the closed lifting group.  It is natural to wonder why $\pi_{cl}(X)$ is not simply defined as $ker \ \Lambda$.  While this would be more efficient, the homomorphisms $h_{\varepsilon}$ and the representation $\pi_{cl}(X) = \bigcap_{\varepsilon > 0} K_{\varepsilon}$ are useful by themselves (see Section 4), beyond the mere fact that $\pi_{cl}(X) = ker \ \Lambda$.  Therefore, it will be convenient to have both the algebraic and geometric interpretations of the closed lifting group.

We have carried out the above constuctions using the $\varepsilon$-covers determined by an arbitrarily chosen base point, $*$.  Since our definitions fundamentally rely upon the lifts of path loops to these covers, this raises the question of whether or not our results depend on that base point, particularly since another base point would technically induce different, albeit isometrically equivalent, covers.  This equivalence and the regularity of the $\varepsilon$-covers should make it evident that the lifts of path loops are independent of the point we choose to construct the $\varepsilon$-covers.  For the sake of completeness, however, we will formally clear up any questions about base points.  

The key result is one we already mentioned above:  if $*$ is our base point in $X$, and $\gamma$ is a path loop based at $x$, then $\gamma$ lifts closed to $X_{\varepsilon}$ if and only if every lollipop $\alpha \gamma \alpha^{-1}$ lifts closed to $X_{\varepsilon}$, where $\alpha$ is any path from $*$ to $x$.  Recall that we denote the base point of $X_{\varepsilon}$ by $\tilde{*}$, which we always take to be the identity in $\pi_{\varepsilon}(X,*) \subset X_{\varepsilon}$.  For the next two lemmas, we adopt the following notation, but we will not need it thereafter.  We denote the $\varepsilon$-covers determined by $*$ using the standard pointed notation, $(X_{\varepsilon},\tilde{*})$.

\begin{lemma}
\label{first bp indep}
Let $X$ be a geodesic space with base points $*_1$ and $*_2$.  A path loop $\gamma$ based at $x \in X$ lifts closed to $(X_{\varepsilon},\tilde{*}_1)$ if and only if $\gamma$ lifts closed to $(X_{\varepsilon},\tilde{*}_2)$.
\end{lemma}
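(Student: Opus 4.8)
The plan is to reduce the statement entirely to the base-point independence of the \emph{condition} ``$\gamma$ lifts closed,'' using the lollipop characterization that was just recalled. Fix a path $\alpha$ in $X$ from $*_1$ to $*_2$; this gives an isometric covering equivalence $\Psi:(X_\varepsilon,\tilde*_1) \to (X_\varepsilon,\tilde*_2)$ sending $[\beta]_\varepsilon \mapsto [\alpha^{-1}\beta]_\varepsilon$ (the construction recalled in Section~2 for a change of base point, here with the roles of $*$ and $*'$ as needed). The key point is that $\Psi$ is a covering equivalence of regular covers of $X$ over the identity on $X$, so it commutes with path lifting: if $\eta$ is a path in $X$ starting at $x$, and $\tilde\eta_1$, $\tilde\eta_2$ are its lifts starting at the two chosen points in $\varphi_\varepsilon^{-1}(x)$ that correspond under $\Psi$, then $\Psi \circ \tilde\eta_1 = \tilde\eta_2$, and in particular $\tilde\eta_1$ is closed if and only if $\tilde\eta_2$ is.

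The main subtlety is that ``$\gamma$ lifts closed to $X_\varepsilon$'' is a statement about lifts to points in $\varphi_\varepsilon^{-1}(x)$, and these points need not be the identity of any $\varepsilon$-group; moreover $\Psi$ a priori only lets us compare lifts at \emph{corresponding} points of the two covers, and we must know the answer is the same at \emph{all} points of a fiber. But this is exactly the regularity remark already in the excerpt (the paragraph following Lemma~\ref{charac lifts}): for a regular cover, the lifts of a path loop based at a point are either all closed or all open, independent of the chosen preimage. So first I would invoke that remark to say that ``$\gamma$ lifts closed to $(X_\varepsilon,\tilde*_i)$'' is well-defined independently of the preimage of $x$ chosen, for $i=1,2$. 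Then, picking a preimage $q \in \varphi_\varepsilon^{-1}(x)$ in the first cover and $\Psi(q)$ in the second, apply the previous paragraph: the lift of $\gamma$ at $q$ is closed iff the lift at $\Psi(q)$ is closed. Combining, $\gamma$ lifts closed to $(X_\varepsilon,\tilde*_1)$ iff it lifts closed to $(X_\varepsilon,\tilde*_2)$.

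I expect the only real friction to be bookkeeping the direction of the change-of-base-point map and verifying it is genuinely a covering equivalence over $\mathrm{id}_X$ (so that it respects $\varphi_\varepsilon$ and hence path lifting) — but this is precisely the content of the base-point-independence discussion in Section~2, so it can be cited rather than reproved. An alternative, slightly more self-contained route avoids $\Psi$ altogether: by the recalled lollipop fact, $\gamma$ lifts closed to $(X_\varepsilon,\tilde*_1)$ iff for any path $\alpha_1$ from $*_1$ to $x$ the lollipop $\alpha_1 \gamma \alpha_1^{-1}$ is $\varepsilon$-null, and likewise with $*_2$; since $\alpha_2 := \alpha^{-1}\alpha_1$ is a path from $*_2$ to $x$, one checks directly that $\alpha_1\gamma\alpha_1^{-1}$ is $\varepsilon$-null iff $\alpha_2\gamma\alpha_2^{-1}$ is (conjugating by a strong $\varepsilon$-chain along $\alpha$ and using that conjugation of an $\varepsilon$-loop preserves $\varepsilon$-nullity). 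Either way the argument is short; I would present the first version, as it most directly uses the regularity observation already highlighted in the text.
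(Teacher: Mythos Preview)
Your proposal is correct and follows essentially the same route as the paper: both arguments use the isometric covering equivalence $\Psi:(X_\varepsilon,\tilde*_1)\to(X_\varepsilon,\tilde*_2)$ over $\mathrm{id}_X$ to transfer a closed lift from one cover to the other. The only cosmetic difference is that the paper routes through the lollipop $\alpha\gamma\alpha^{-1}$ (lifting at the actual base point $\tilde*_1$, pushing through $\Psi$, then unwinding via another lollipop based at $*_2$), whereas you invoke regularity directly to work at an arbitrary preimage $q$ of $x$; these are equivalent bookkeeping choices for the same idea.
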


\begin{proof}
This follows from repeated applications of the aforementioned key result regarding lollipops.  Let $\psi:(X_{\varepsilon},\tilde{*}_1) \rightarrow (X_{\varepsilon},\tilde{*}_2)$ be an isometric covering equivalence (see Section 2) satisfying $\varphi_{\varepsilon}^{2} \circ \psi = \varphi_{\varepsilon}^{1}$, where $\varphi_{\varepsilon}^{1}:(X_{\varepsilon},\tilde{*}_1) \rightarrow (X,*_1)$ and $\varphi_{\varepsilon}^{2}:(X_{\varepsilon},\tilde{*}_2) \rightarrow (X,*_2)$ are the respective covers.  If $\gamma$ lifts closed to $(X_{\varepsilon},\tilde{*}_1)$, then $\alpha \gamma \alpha^{-1}$ lifts closed to $(X_{\varepsilon},\tilde{*}_1)$, where $\alpha$ is any path from $*_1$ to $x$.  Let $\tilde{\lambda}$ denote the lift of $\alpha \gamma \alpha^{-1}$ to $\tilde{*}_1 \in (X_{\varepsilon},\tilde{*}_1)$.  Then $\psi \circ \tilde{\lambda}$ is a path loop in $(X_{\varepsilon},\tilde{*}_2)$, and it projects to $\alpha \gamma \alpha^{-1}$ under $\varphi_{\varepsilon}^2$, because of the equality $\varphi_{\varepsilon}^{2} \circ \psi = \varphi_{\varepsilon}^{1}$.  Thus, $\alpha \gamma \alpha^{-1}$ lifts closed to $(X_{\varepsilon},\tilde{*}_2)$, which further implies that $\beta \alpha \gamma \alpha^{-1} \beta^{-1} = (\beta \alpha)\gamma(\beta \alpha)^{-1}$ lifts closed to $(X_{\varepsilon},\tilde{*}_2)$, where $\beta$ is any path from $*_2$ to $*_1$.  But this implies that $\gamma$ lifts closed to $(X_{\varepsilon},\tilde{*}_2)$.\end{proof}

\vspace{.1 in}
\noindent In particular, there is no ambiguity in stating that a path loop lifts closed to $X_{\varepsilon}$ without referencing the base point used to determine the $\varepsilon$-covers.  We will use this result without comment going forward.

Continuing the notation of the previous lemma, let $\pi_{cl}(X,*_1)$ and $\pi_{cl}(X,*_2)$ denote the closed lifting groups at $*_1$ and $*_2$, respectively.  For $\varepsilon > 0$, let $K_{\varepsilon}(*_1)$ and $K_{\varepsilon}(*_2)$ denote the respective kernels of $h_{\varepsilon}^1:\pi_1(X,*_1) \rightarrow \pi_{\varepsilon}(X,*_1)$ and $h_{\varepsilon}^2:\pi_1(X,*_2) \rightarrow \pi_{\varepsilon}(X,*_2)$.  Fix a path, $\alpha$, from $*_2$ to $*_1$, and define $F:\pi_1(X,*_1) \rightarrow \pi_1(X,*_2)$ by $F([\gamma]) = [\alpha \gamma \alpha^{-1}]$. 

\begin{lemma}
\label{rfg indep bp}
For a geodesic space, $X$, the following hold.
\begin{enumerate}
\item[1)] The restriction of $F$ to $K_{\varepsilon}(*_1)$ is an isomorphism onto $K_{\varepsilon}(*_2)$.
\item[2)] The restriction of $F$ to $\pi_{cl}(X,*_1)$ is an isomorphism onto $\pi_{cl}(X,*_2)$.
\item[3)] $\bar{\pi}_1(X,*_1)=\pi_1(X,*_1)/\pi_{cl}(X,*_1) \cong \pi_1(X,*_2)/\pi_{cl}(X,*_2)=\bar{\pi}_1(X,*_2)$.
\end{enumerate}
\end{lemma}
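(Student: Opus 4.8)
The plan is to prove the three parts in order, using part 1 to get part 2 and then part 2 to get part 3, with the lollipop-conjugation map $F$ as the central tool throughout. First I would check that $F([\gamma]) = [\alpha\gamma\alpha^{-1}]$ is a well-defined homomorphism $\pi_1(X,*_1) \to \pi_1(X,*_2)$ — this is the standard change-of-basepoint isomorphism in classical algebraic topology, with inverse given by conjugation by $\alpha^{-1}$, so it requires no real work here.

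\emph{Part 1.} I would show $F(K_{\varepsilon}(*_1)) \subseteq K_{\varepsilon}(*_2)$ and symmetrically that $F^{-1}$ carries $K_{\varepsilon}(*_2)$ into $K_{\varepsilon}(*_1)$; together with the fact that $F$ is already a bijection on all of $\pi_1$, this gives the claimed isomorphism of subgroups. The forward containment is essentially the ``key result regarding lollipops'' invoked just before Lemma~\ref{first bp indep}: if $[\gamma] \in K_{\varepsilon}(*_1)$, i.e. $\gamma$ lifts closed to $X_{\varepsilon}$ (characterized basepoint-freely via Lemma~\ref{charac lifts} and Lemma~\ref{first bp indep}), then the lollipop $\alpha\gamma\alpha^{-1}$ also lifts closed to $X_{\varepsilon}$ — since a lollipop lifts closed to a regular cover iff its head does — and hence $F([\gamma]) = [\alpha\gamma\alpha^{-1}] \in K_{\varepsilon}(*_2)$. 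The reverse containment is the same argument applied with the roles of $*_1$ and $*_2$ swapped and with the path $\alpha^{-1}$ from $*_1$ to $*_2$.

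\emph{Part 2.} Since $\pi_{cl}(X,*_i) = \bigcap_{\varepsilon > 0} K_{\varepsilon}(*_i)$ by definition, and $F$ restricts to a bijection $K_{\varepsilon}(*_1) \to K_{\varepsilon}(*_2)$ for every $\varepsilon > 0$ by part 1, it carries the intersection over $\varepsilon$ bijectively onto the intersection over $\varepsilon$; as $F$ is a group homomorphism on all of $\pi_1(X,*_1)$, its restriction to $\pi_{cl}(X,*_1)$ is a group isomorphism onto $\pi_{cl}(X,*_2)$. (Alternatively one could quote $\pi_{cl} = \ker\Lambda$ from Proposition~\ref{characterize k0}, but the intersection description is more direct here.)

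\emph{Part 3.} Now $F:\pi_1(X,*_1) \to \pi_1(X,*_2)$ is an isomorphism carrying the normal subgroup $\pi_{cl}(X,*_1)$ isomorphically onto the normal subgroup $\pi_{cl}(X,*_2)$, so it descends to an isomorphism of quotients $\bar{\pi}_1(X,*_1) = \pi_1(X,*_1)/\pi_{cl}(X,*_1) \to \pi_1(X,*_2)/\pi_{cl}(X,*_2) = \bar{\pi}_1(X,*_2)$; concretely, compose $F$ with the quotient map to $\bar{\pi}_1(X,*_2)$ and apply the generalized First Isomorphism Theorem recalled after Definition~\ref{revised fg}, noting the kernel of the composite is exactly $\pi_{cl}(X,*_1)$ by part 2. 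I do not expect a serious obstacle in any of this; the one point that needs a little care is the basepoint-independence of the phrase ``lifts closed to $X_{\varepsilon}$'' used in part 1, but that is precisely what Lemma~\ref{first bp indep} has already established, so it may be cited directly.
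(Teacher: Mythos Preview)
Your proposal is correct and follows essentially the same approach as the paper: both use Lemma~\ref{first bp indep} together with the lollipop-lifting property to prove part 1, intersect over $\varepsilon$ for part 2, and then apply the generalized First Isomorphism Theorem for part 3. The only cosmetic difference is that the paper spells out surjectivity in part 1 by explicitly writing $\gamma = \alpha^{-1}\lambda\alpha$ for a given $[\lambda] \in K_{\varepsilon}(*_2)$, whereas you invoke the symmetry of the construction via $F^{-1}$; these amount to the same thing.
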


\begin{proof}
We first note that $F$ is an isomorphism; this is standard in fundamental group arguments.  Suppose $[\gamma] \in K_{\varepsilon}(*_1)$, so that $\gamma$ is a path loop at $*_1$ that lifts closed to $(X_{\varepsilon},\tilde{*}_1)$.  By Lemma \ref{first bp indep}, $\gamma$ lifts closed to $(X_{\varepsilon},\tilde{*}_2)$, implying that $\alpha \gamma \alpha^{-1}$ lifts closed to $(X_{\varepsilon},\tilde{*}_2)$.  This shows that $F$ maps $K_{\varepsilon}(*_1)$ into $K_{\varepsilon}(*_2)$.  On the other hand, if $[\lambda] \in K_{\varepsilon}(*_2)$, let $\gamma = \alpha^{-1}\lambda \alpha$, so that $F([\gamma])=[\lambda]$.  By Lemma \ref{first bp indep}, $\lambda$ lifts closed to $(X_{\varepsilon},\tilde{*}_1)$, and it follows that $\gamma$ lifts closed to $(X_{\varepsilon},\tilde{*}_1)$. Thus, $F$ maps $K_{\varepsilon}(*_1)$ onto $K_{\varepsilon}(*_2)$, proving 1.

Next, suppose $[\gamma] \in \pi_{cl}(X,*_1)$, and let $\varepsilon > 0$ be given.  Since $[\gamma] \in K_{\varepsilon}(*_1)$, $F([\gamma]) \in K_{\varepsilon}(*_2)$ by part 1.  Since $\varepsilon$ was arbitrary, this shows that $F([\gamma]) \in \pi_{cl}(X,*_2)$, and $F$ maps $\pi_{cl}(X,*_1)$ into $\pi_{cl}(X,*_2)$.  By the same reasoning, if $[\lambda] \in \pi_{cl}(X,*_2)$, then $F^{-1}([\lambda]) \in K_{\varepsilon}(*_1)$ for all $\varepsilon > 0$.  Thus, $F^{-1}([\lambda]) \in \pi_{cl}(X,*_1)$, and $F$ maps $\pi_{cl}(X,*_1)$ onto $\pi_{cl}(X,*_2)$, proving part 2.

Finally, part 3 now follows from the homomorphism result we used earlier.  If $\bar{h}_1:\pi_1(X,*_1) \rightarrow \bar{\pi}_1(X,*_1)$ and $\bar{h}_2:\pi_1(X,*_2) \rightarrow \bar{\pi}_1(X,*_2)$ denote the respective quotient maps, then the kernel of $\bar{h}_2 \circ F: \pi_1(X,*_1) \rightarrow \bar{\pi}_1(X,*_2)$ is the subgroup $F^{-1}(\pi_{cl}(X,*_2))=\pi_{cl}(X,*_1)$.  Hence, there is a unique, surjective homomorphism $f:\bar{\pi}_1(X,*_1) \rightarrow \bar{\pi}_2(X,*_2)$ satisfying $f \circ \bar{h}_1 = \bar{h}_2 \circ F$ and defined by $f([\gamma]\pi_{cl}(X,*_1)) = \bar{h}_2(F([\gamma]))$.  If $f([\gamma]\pi_{cl}(X,*_1)) = \pi_{cl}(X,*_2) \in \bar{\pi}_1(X,*_2)$, then $F([\gamma]) \in \pi_{cl}(X,*_2)$, which means that $[\gamma] \in \pi_{cl}(X,*_1)$.  That is, $f$ is injective.\end{proof}

\vspace{.1 in}
\noindent Consequently, we will treat base points very informally.  We always technically need to choose one to determine the collection of $\varepsilon$-covers $\{X_{\varepsilon}\}$, but it will play no other significant role.

Proposition \ref{characterize k0} provides some useful intuition regarding the closed lifting and revised fundamental groups.  For instance, nontrivial elements of $\bar{\pi}_1(X)$ are obviously represented by classes of path loops at $*$ that eventually lift open to $X_{\varepsilon}$ for all sufficiently small $\varepsilon > 0$.  Going further, each distinct, nontrivial element of the revised fundamental group determines a critical value in the compact case.  Suppose $[\gamma]$, $[\lambda] \in \pi_1(X)\setminus \pi_{cl}(X)$ and $[\gamma]\pi_{cl}(X) = [\lambda]\pi_{cl}(X)$, so that there is a path loop, $\sigma$, at $*$ that lifts closed to $X_{\varepsilon}$ for all $\varepsilon$ and is such that $\gamma$ is homotopic to $\lambda \sigma$.  Since $\sigma$ always lifts closed, the endpoint of the lift of $\lambda \sigma$ will just be the endpoint of the lift of $\lambda$.  Thus, $[\gamma]=[\lambda \sigma]$ implies that $\gamma$ and $\lambda$ will both lift open or closed to any given $X_{\varepsilon}$.  So, if $\gamma$ lifts open to $X_{\varepsilon}$ but closed to $X_{\delta}$ for all $\delta > \varepsilon$, thus making $\varepsilon$ a critical value, it follows that $\lambda$ has the same property.  That is, each element of $[\gamma]\pi_{cl}(X)$ determines $\varepsilon$ as a critical value.  Formally, there is a surjective map $f:\bar{\pi}_1(X) \rightarrow Cr(X) \cup \{0\}$, taking $\pi_{cl}(X)$ to $0$ and each nontrivial element to the critical value determined by that coset.  Multiple distinct cosets, of course, can determine the same critical value, so $f$ need not be injective.

For another interpretation in light of Lemma \ref{geodesic loop reduction}, we can intuitively think of $\bar{\pi}_1(X)$ as a ``large scale fundamental group.'' By this, we mean that $\bar{\pi}_1(X)$ treats as nontrivial only those fundamental group elements $[\gamma]$ that have positive diameter in the sense that $\inf_{\lambda \in [\gamma]} diam(\lambda) > 0$.  To wit, suppose $[\gamma]\pi_{cl}(X)$ is not the identity in $\bar{\pi}_1(X)$, and let $\varepsilon$ be the largest value such that $\gamma$ lifts open to $X_{\varepsilon}$.  By Corollary \ref{lollipop products}, any lollipop factorization of $[\gamma]$ must contain at least one factor, $\alpha \beta \alpha^{-1}$, where $\beta$ is a path loop that is not homotopic to any path loop lying in a ball of radius $\varepsilon$.  In other words, there is a positive lower bound of $\varepsilon$ on the diameter of the image of any representative in $[\gamma]$.

On the other hand, obtaining a general geometric picture of the elements in $\pi_{cl}(X)$ is more difficult, but Lemma \ref{geodesic loop reduction} is useful here, also.  Suppose $[\gamma] \in \pi_{cl}(X)$ is nontrivial.  Then, by Lemma \ref{geodesic loop reduction}, $[\gamma]$ can be represented by an $\varepsilon$-lollipop factorization for any $\varepsilon > 0$.  The minimal number of lollipops in each factorization may increase as $\varepsilon$ decreases, but the heads of the lollipops become arbitrarily small.  This is a generalization of a formal \textit{small path loop}, or a path loop $\gamma$ based at a point $x$ such that every neighborhood of $x$ contains a path loop that is homotopic to $\gamma$.  Note that the word `small' here is part of the name and definition, not just an adjectival descriptor.  Nontrivial small path loops have long been known as one of the obstructions to semilocal simple connectivity, though they were formally defined in the investigation of \textit{small loop spaces} by Z. Virk in \cite{V}.  By definition, $X$ is not homotopically Hausdorff if there is at least one nontrivial small path loop in $X$.

\begin{lemma}
\label{k0 contains small loops}
If $X$ is geodesic, and if $\beta$ is a small path loop based at $x \in X$, with $\alpha$ any path from $*$ to $x$, then $[\alpha \beta \alpha^{-1}] \in \pi_{cl}(X,*)$. In particular, if $\pi_{cl}(X) =1$, then $X$ is homotopically Hausdorff.
\end{lemma}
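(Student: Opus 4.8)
The plan is to reduce the statement to Corollary~\ref{loop in a ball} via a diameter-shrinking argument. Suppose $\beta$ is a small path loop based at $x$, and fix $\alpha$ a path from $*$ to $x$. By Proposition~\ref{characterize k0}, it suffices to show that $\alpha\beta\alpha^{-1}$ lifts closed to $X_\varepsilon$ for \emph{every} $\varepsilon > 0$; then $[\alpha\beta\alpha^{-1}] \in \bigcap_{\varepsilon>0}K_\varepsilon = \pi_{cl}(X,*)$. So fix an arbitrary $\varepsilon > 0$.

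First I would use the definition of ``small'': every neighborhood of $x$ contains a path loop homotopic to $\beta$. Apply this to the neighborhood $B_\varepsilon(x)$ of $x$ to obtain a path loop $\beta'$ based at some point of $B_\varepsilon(x)$ (or based at $x$, depending on one's reading of the definition — if the homotopy is rel $x$ then $\beta'$ is based at $x$ and lies in $B_\varepsilon(x)$; in either case $\beta'$ lies inside the open ball $B_\varepsilon(x)$) with $\beta \simeq \beta'$ rel basepoint. The one subtlety is matching basepoints: if the small-loop definition only guarantees a \emph{freely} homotopic loop in the neighborhood, I would instead note that a path loop homotopic to $\beta$ rel $x$ is in particular contained, up to homotopy, in any neighborhood; concretely, since $[\beta]$ has a representative in $B_\varepsilon(x)$, the lollipop $\alpha\beta\alpha^{-1}$ is homotopic rel $*$ to $\alpha\beta'\alpha^{-1}$ where $\beta'$ is a path loop lying in $B_\varepsilon(x)$.

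Next, since $\beta'$ is a path loop lying in the open ball $B_\varepsilon(x)$ of radius $\varepsilon$, Corollary~\ref{loop in a ball} (applied with the ``ball not necessarily centered at the basepoint of the loop'' hypothesis) gives that $\beta'$ lifts closed to $X_\varepsilon$. Since a lollipop $\alpha\beta'\alpha^{-1}$ lifts closed to a regular cover if and only if its head $\beta'$ does (uniqueness of path lifts, as used just after Lemma~\ref{geodesic loop reduction}), it follows that $\alpha\beta'\alpha^{-1}$ lifts closed to $X_\varepsilon$. But $\alpha\beta\alpha^{-1} \simeq \alpha\beta'\alpha^{-1}$ rel $*$, and whether a path loop lifts closed depends only on its homotopy class (Lemma~\ref{charac lifts}: $[\gamma]$ lifts closed to $X_\varepsilon$ iff $[\gamma]\in K_\varepsilon$, a property of the class). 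Hence $\alpha\beta\alpha^{-1}$ lifts closed to $X_\varepsilon$. As $\varepsilon > 0$ was arbitrary, $[\alpha\beta\alpha^{-1}] \in \pi_{cl}(X,*)$.

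Finally, for the ``in particular'' clause: if $\pi_{cl}(X) = 1$ but $X$ were not homotopically Hausdorff, there would be a nontrivial small path loop $\beta$ at some $x$. Taking $\alpha$ to be a path from $*$ to $x$, the first part gives $[\alpha\beta\alpha^{-1}] \in \pi_{cl}(X) = 1$, so $[\alpha\beta\alpha^{-1}]$ is trivial in $\pi_1(X,*)$; since conjugation by $[\alpha]$ is an isomorphism $\pi_1(X,x)\to\pi_1(X,*)$, this forces $[\beta] = 1$ in $\pi_1(X,x)$, contradicting nontriviality of $\beta$. The main obstacle is purely the basepoint-bookkeeping in the reduction to a loop genuinely lying in $B_\varepsilon(x)$; once that is handled cleanly, everything else is a direct citation of Corollary~\ref{loop in a ball} and Lemma~\ref{charac lifts}.
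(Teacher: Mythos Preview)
Your proof is correct and follows essentially the same route as the paper: for each $\varepsilon>0$, replace $\beta$ by a homotopic representative in $B_\varepsilon(x)$, invoke Corollary~\ref{loop in a ball} to see the head lifts closed, and then use the lollipop lifting fact (the paper cites Corollary~\ref{lollipop products}, you cite the equivalent remark after Lemma~\ref{geodesic loop reduction}) to conclude $\alpha\beta\alpha^{-1}$ lifts closed. Your basepoint worry is unnecessary here---the paper explicitly states that ``homotopic'' always means rel basepoint, so the small-loop definition already gives a representative based at $x$ lying in $B_\varepsilon(x)$---but your argument handles it correctly regardless.
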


\begin{proof}
The first statement follows immediately from Corollary \ref{loop in a ball} and Lemma \ref{lollipop products}, since, by definition, there is a representative of $\beta$ based at $x$ and lying in $B_{\varepsilon}(x)$ for any $\varepsilon > 0$. If $X$ is not homotopically Hausdorff, then there is a nontrivial small path loop based at some point in $X$.  But this produces a nontrivial element of $\pi_{cl}(X)$ by the first statement.\end{proof}

\vspace{.1 in}
\noindent In particular, $\pi_{cl}(X)$ contains what Pakdaman, Torabi, and Mashayekhy call in \cite{PTM} the \textit{SG-subgroup}. This is the subgroup $\pi_1^{sg}(X) \subset \pi_1(X)$ generated by all \textit{small loop lollipops}, or path loops of the form $\alpha \beta \alpha^{-1}$, where $\alpha$ is a path beginning at $*$ and $\beta$ is a small path loop based at $\alpha(1)$.  In many cases, $\pi_1^{sg}(X)$ actually equals $\pi_{cl}(X)$ (cf. Example \ref{K example}), but this is not true in general.  The space in Example \ref{gasket example} illustrates that $\pi_{cl}(X)$ may contain elements $[\gamma]$ such that $\gamma$ is not homotopic to any finite product of small loop lollipops. We postpone the example until Section 4, since it relies on the construction of another example we give there.  It is, however, still open whether or not the converse of the second part of Lemma \ref{k0 contains small loops} holds.  Informally, is the presence of at least one nontrivial small path loop the only way in which $\pi_{cl}(X)$ can be nontrivial when $X$ is compact?  In the non-compact case, it is easy to find examples - even complete Riemannian manifolds - where $\pi_{cl}(X)$ is nontrivial while $X$ is homotopically Hausdorff.  The surface of revolution formed by rotating the graph of $f(x) = e^{-x}$ around the $x$-axis is such a space.

There is one important case, however, where these two necessarily coincide.  Cannon and Conner showed that one-dimensional spaces are automatically homotopically Hausdorff (Corollary 5.4, \cite{CC}).  Berestovskii-Plaut proved (Theorem 93, \cite{BPUU}) that when $X$ has uniform dimension at most $1$, $\Lambda$ is injective.  Since $ker \ \Lambda = \pi_{cl}(X)$, we have

\begin{lemma}
\label{k0 trivial 1d}
If $X$ is a uniformly one-dimensional geodesic space \emph{(}e.g. if $X$ is compact with topological dimension $1$\emph{)}, then $\pi_{cl}(X) = 1$ and $\bar{\pi}_1(X) = \pi_1(X)$.
\end{lemma}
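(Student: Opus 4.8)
The plan is to deduce Lemma \ref{k0 trivial 1d} directly from the two cited external results, treating it essentially as a short bookkeeping argument rather than a new proof. First I would recall that, by hypothesis, $X$ is a geodesic space of uniform dimension at most $1$; in the parenthetical case, a compact metric space of topological dimension $1$ has uniform dimension $1$ as well, so the hypothesis of Berestovskii--Plaut's Theorem 93 in \cite{BPUU} is met. That theorem asserts that the canonical homomorphism $\Lambda:\pi_1(X) \rightarrow \Delta(X)$ is injective precisely when $X$ has uniform dimension at most $1$. Hence $ker\ \Lambda$ is trivial.

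Next I would invoke Proposition \ref{characterize k0} together with the identification established in the discussion following it, namely that $\pi_{cl}(X) = ker\ \Lambda$. Combining this with the previous paragraph gives $\pi_{cl}(X,*) = 1$. The final claim, $\bar{\pi}_1(X) = \pi_1(X)$, is then immediate from Definition \ref{revised fg}: the revised fundamental group is the quotient $\pi_1(X,*)/\pi_{cl}(X,*)$, and quotienting by the trivial subgroup changes nothing, so $\bar{h}:\pi_1(X,*) \rightarrow \bar{\pi}_1(X,*)$ is an isomorphism. By Lemma \ref{rfg indep bp}, none of this depends on the choice of base point, so I can suppress it as the statement does.

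I would also note in passing the consistency check offered by Cannon--Conner's Corollary 5.4 in \cite{CC}: one-dimensional compact spaces are homotopically Hausdorff, which by Lemma \ref{k0 contains small loops} is a \emph{necessary} condition for $\pi_{cl}(X)$ to be trivial; the present lemma strengthens ``necessary'' to ``sufficient'' in the one-dimensional setting. This remark is not logically needed for the proof but clarifies why the one-dimensional case is special, and it is worth including a sentence to that effect.

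The main obstacle here is not in the argument itself, which is a two-line deduction, but in making sure the dimension hypotheses line up correctly: one must confirm that ``uniformly one-dimensional'' is exactly the hypothesis Berestovskii--Plaut use, and that ``compact with topological dimension $1$'' genuinely implies it (for compact metric spaces the covering, inductive, and uniform notions of dimension agree, so this is standard but should be stated). Once that matching is in place, everything else is formal, and the proof is essentially complete upon citing Theorem 93 of \cite{BPUU} and Proposition \ref{characterize k0}.
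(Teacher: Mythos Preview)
Your proposal is correct and matches the paper's approach exactly: the paper's ``proof'' is the sentence immediately preceding the lemma, which cites Theorem 93 of \cite{BPUU} for the injectivity of $\Lambda$ and then uses $\ker\Lambda = \pi_{cl}(X)$. One small wording issue: you say Theorem 93 asserts $\Lambda$ is injective \emph{precisely when} $X$ has uniform dimension at most $1$, but the theorem only gives the forward implication (which is all you need), so drop ``precisely.''
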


There is also a metric characterization of the closed lifting group when $X$ is compact and has a universal cover, and this characterization establishes yet another connection to the work of Sormani and Wei.  In \cite{SW4}, they defined the \textit{slipping group} of a geodesic space, $X$, with a universal cover, $\hat{X}$.  This is the subgroup $\pi_{slip}(X) \subset \pi_1(X)$ generated by elements $g \in \pi_1(X)$ satisfying $\inf \{\hat{d}(x,gx) : x \in \hat{X}\} = 0$, where $\hat{d}$ is the lifted geodesic metric on $\hat{X}$ and the notation $gx$ refers to the action on $\hat{X}$ by the deck transformation corresponding to $g$.  While the definition of $\pi_{cl}(X)$ does not require a universal cover, we can show that $\pi_{cl}(X) = \pi_{slip}(X)$ when $X$ is compact geodesic and has a universal cover.

\begin{lemma}
\label{slipping lemma}
If $X$ is a compact geodesic space with a universal cover, then $\pi_{slip}(X) = \pi_{cl}(X)$.
\end{lemma}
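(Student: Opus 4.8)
The plan is to show the two inclusions $\pi_{slip}(X) \subseteq \pi_{cl}(X)$ and $\pi_{cl}(X) \subseteq \pi_{slip}(X)$, working with the universal cover $\hat X$, which by the discussion in Section 2 is (equivalent to) the cover $X_\varepsilon$ for all sufficiently small $\varepsilon$, namely $0 < \varepsilon \le \min Cr(X)$. Since both groups are generated by explicit elements, it suffices to handle generators, and in fact it is cleanest to characterize both subgroups by the same geometric condition on the deck action.

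First I would prove $\pi_{slip}(X) \subseteq \pi_{cl}(X)$. Let $g$ be a generator of $\pi_{slip}(X)$, so $\inf\{\hat d(x, gx) : x \in \hat X\} = 0$. Fix $\varepsilon > 0$; I want to show $g \in K_\varepsilon$, i.e., $g$ lifts closed to $X_\varepsilon$. Choosing a point $x \in \hat X$ with $\hat d(x, gx) < \varepsilon$, and using that $\hat X$ is geodesic so $x$ lies on the image of some path loop, one can realize $g$ (after conjugating by a path, i.e. changing the base point in $\hat X$) by a path loop whose lift to $\hat X$ is a path from $x$ to $gx$ of length close to $\hat d(x,gx)$. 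Because the covering map $\hat X \to X$ is a local isometry and $\hat d(x,gx) < \varepsilon$, I can project and apply Corollary \ref{loop in a ball} (a path loop lying in an $\varepsilon$-ball lifts closed to $X_\varepsilon$), together with the lollipop independence used throughout Section 3, to conclude $g \in K_\varepsilon$. Since $\varepsilon$ was arbitrary, $g \in \bigcap_\varepsilon K_\varepsilon = \pi_{cl}(X)$; normality of $\pi_{cl}(X)$ and closure under products then give the inclusion for all of $\pi_{slip}(X)$.

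For the reverse inclusion $\pi_{cl}(X) \subseteq \pi_{slip}(X)$, let $[\gamma] \in \pi_{cl}(X)$. I would use Lemma \ref{geodesic loop reduction}: for every $\varepsilon > 0$, $\gamma$ is homotopic to a finite product of $\tfrac{3\varepsilon}{2}$-lollipops $\alpha_1\beta_1\alpha_1^{-1}\cdots\alpha_m\beta_m\alpha_m^{-1}$. The key point is that a lollipop whose head $\beta_i$ lies in a ball of radius $\tfrac{3\varepsilon}{2}$ contributes a deck transformation that moves the endpoint of the lift of $\alpha_i$ by at most (something comparable to) $\varepsilon$ in $\hat d$, because the lift of $\beta_i$ stays in a ball of that radius in $\hat X$ (using that $\hat X \to X$ is a local isometry, at least once $\varepsilon$ is small relative to the injectivity-type scale $\min Cr(X)$). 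Thus the deck transformation $g$ corresponding to $[\gamma]$ satisfies $\hat d(p_0, g p_0) \le C\varepsilon m$ at a suitable point $p_0$ — but $m$ may grow with $\varepsilon$, so I cannot directly bound $\hat d(p_0, gp_0)$. The honest route is instead to argue at the level of generators of $\pi_{cl}(X)$ differently, or to first reduce to showing $\pi_{cl}(X) \subseteq \pi_{slip}(X)$ via the already-established fact $\pi_{cl}(X) = \ker\Lambda$ and a translation-length argument: since $g \in \pi_{cl}(X)$ implies $g$ acts trivially on the inverse limit $\tilde X$, and $\hat X$ maps onto a cofinal piece of that system, the translation length $|g| \defeq \inf_{x}\hat d(x,gx)$ must vanish — any $\varepsilon$-lollipop factorization exhibits, by the same local-isometry argument applied in $X_\varepsilon \simeq \hat X$, an orbit displacement below $\varepsilon$ at the point where the factorization is ``opened up,'' and one takes $\varepsilon \to 0$.

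The main obstacle is precisely this translation-length estimate: controlling $\inf_x \hat d(x, gx)$ from above when $g \in \pi_{cl}(X)$, since the natural lollipop factorizations have uncontrolled length $m$. I expect the resolution to be the observation that $g \in K_\varepsilon$ means $g$ is trivial in $\pi_\varepsilon(X)$, hence acts as the identity on $X_\varepsilon$; under the covering equivalence $\hat X \simeq X_\varepsilon$ valid for $\varepsilon \le \min Cr(X)$ this would force $g$ to be trivial in the deck group and contradict nontriviality unless $g$ is trivial — so in fact one should argue that $g \in \pi_{cl}(X)$ forces, for $\varepsilon$ just above the relevant scale, that the orbit displacement is small, and then push the geometry of the $\varepsilon$-cover (where displacement $< \varepsilon$ forces triviality, as recalled in Section 2) to conclude $|g| = 0$ directly. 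I would organize the final write-up around the clean equivalence: for $X$ with universal cover $\hat X$, $g \in \pi_{cl}(X) \iff g \in K_\varepsilon$ for all small $\varepsilon \iff \inf_x \hat d(x,gx) = 0 \iff g \in \pi_{slip}(X)$, using Corollary \ref{loop in a ball} and Lemma \ref{charac lifts} in the geometric steps and the generation-by-generators remark to pass between subgroups and their generators.
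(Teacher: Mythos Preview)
Your proposal circles around the right ideas but misses the one observation that makes the proof immediate, and in the process you get genuinely confused at the key step.

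The paper's proof is two lines once you invoke Lemma \ref{univ cover first eq}: since $Cr(X)$ is finite, fix any $\varepsilon \le \min Cr(X)$; then $\hat X = X_\varepsilon$, its deck group is $\pi_\varepsilon(X)$, and crucially $\pi_{cl}(X) = K_\varepsilon$ (not just $\pi_{cl}(X) \subseteq K_\varepsilon$). Now both inclusions are trivial. If $[\gamma] \in \pi_{cl}(X) = K_\varepsilon$, then $h_\varepsilon([\gamma])$ is the identity in $\pi_\varepsilon(X)$, so $[\gamma]$ acts on $\hat X$ as the identity map; hence $\hat d(x,gx) = 0$ for every $x$, and $[\gamma] \in \pi_{slip}(X)$. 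Conversely, if $g$ is a generator of $\pi_{slip}(X)$ with $\inf_x \hat d(x,gx) = 0$, then $h_\varepsilon(g)$ cannot be a nontrivial element of $\pi_\varepsilon(X)$, because the $\varepsilon$-action is discrete (Section 2: any nontrivial element moves every point by at least $\varepsilon$). Hence $g \in K_\varepsilon = \pi_{cl}(X)$, and since $\pi_{cl}(X)$ is a subgroup, all of $\pi_{slip}(X)$ lies in it.

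Your difficulty with $\pi_{cl}(X) \subseteq \pi_{slip}(X)$ stems from not using $\pi_{cl}(X) = K_\varepsilon$. You try to bound the translation length via lollipop factorizations, correctly notice that $m$ is uncontrolled, and then, when you finally write ``$g \in K_\varepsilon$ means $g$ is trivial in $\pi_\varepsilon(X)$, hence acts as the identity on $X_\varepsilon$,'' you interpret this as a \emph{contradiction} (``would force $g$ to be trivial in the deck group and contradict nontriviality''). That is backwards: being trivial in the deck group is exactly what you want --- it gives displacement zero at every point, hence membership in $\pi_{slip}(X)$. An element of $\pi_1(X)$ can be nontrivial while its image in the deck group is trivial; that is precisely what the covering group $K_\varepsilon$ consists of. Your approach to $\pi_{slip}(X) \subseteq \pi_{cl}(X)$ via Corollary \ref{loop in a ball} and short representatives could be made to work, but the discreteness of the $\varepsilon$-action gives it in one line.
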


\begin{proof}
Since a universal cover exists, $Cr(X)$ is finite.  Thus, we can choose $\varepsilon$ small enough - less than $\min Cr(X)$ - so that $X_{\varepsilon}$ is our universal cover and its deck group is $\pi_{\varepsilon}(X) \cong \pi_1(X)/K_{\varepsilon}$.  In fact, the homomorphism $h_{\varepsilon}:\pi_1(X) \rightarrow \pi_{\varepsilon}(X)$ is precisely the map that takes $[\gamma] \in \pi_1(X)$ to its corresponding deck transformation.  Moreover, Lemma \ref{univ cover first eq} will show that $\pi_{cl}(X) = K_{\varepsilon}$ in this case.

Let $[\gamma] \in \pi_1(X)$ be given, and let $[\alpha]_{\varepsilon} = h_{\varepsilon}([\gamma])$.  Suppose $[\gamma] \in \pi_{cl}(X) = K_{\varepsilon}$, so that $[\alpha]_{\varepsilon}$ is trivial.  Then $[\gamma]$ corresponds to the identity map of $X_{\varepsilon}$ and is, therefore, in $\pi_{slip}(X)$.  On the other hand, suppose $[\gamma] \in \pi_{slip}(X)$.  If $[\gamma]$ were not in $\pi_{cl}(X) = K_{\varepsilon}$, then $[\alpha]_{\varepsilon}$ would be nontrivial in $\pi_{\varepsilon}(X)$.  But the fact that $\pi_{\varepsilon}(X)$ acts discretely on $X_{\varepsilon}$ would then imply that $\hat{d}([\alpha]_{\varepsilon}[\beta]_{\varepsilon},[\beta]_{\varepsilon}) \geq \varepsilon$ for all $[\beta]_{\varepsilon} \in X_{\varepsilon}$.  This would contradict that $[\gamma] \in \pi_{slip}(X)$.\end{proof}

Next, we determine the relationship between $\bar{\pi}_1(X)$ and $\Delta(X)$.   Applying the same homomorphism theorem we used to define $\bar{h}_{\varepsilon}$, we see that there is a unique homomorphism $\bar{h}_{\Delta}:\bar{\pi}_1(X) \rightarrow \Delta(X)$ such that the following diagram commutes.
\begin{equation}
\label{comm diagram delta}
\begin{diagram}
\pi_1(X) & \rTo_{\Lambda} & \Delta(X) \\
\dTo_{\bar{h}} & \ruTo^{\bar{h}_{\Delta}} &  \\
 \bar{\pi}_1(X) &
\end{diagram}
\end{equation}
\noindent In this case, $\bar{h}_{\Delta}$ need not be surjective, because $\Lambda$ may not be. However, since $ker \ \Lambda = \pi_{cl}(X)$ instead of just containing it, $\bar{h}_{\Delta}$ is injective.  Thus, $\bar{h}_{\Delta}$ is an isomorphism if and only if it is surjective, which holds if and only if $\Lambda$ is surjective.  Recalling that $\Lambda(\pi_1(X))$ is a dense normal subgroup of $\Delta(X)$ with the inverse limit topology and that $\Delta(X) \cong \breve{\pi}_1(X)$, we have the following.

\begin{corollary}
\label{rfg embeds ufg}
If $X$ is a geodesic space, then the revised fundamental group isomorphically injects into the uniform fundamental group as a dense normal subgroup. Furthermore, $\bar{\pi}_1(X)$ isomorphically injects into $\breve{\pi}_1(X)$.
\end{corollary}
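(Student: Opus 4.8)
The plan is to synthesize the observations already assembled around commutative diagram (\ref{comm diagram delta}); this corollary is essentially a repackaging of that discussion, so I would keep the argument short. The central object is the homomorphism $\bar{h}_{\Delta}:\bar{\pi}_1(X)\rightarrow\Delta(X)$ produced by the generalized First Isomorphism Theorem applied to $\Lambda:\pi_1(X)\rightarrow\Delta(X)$ and the normal subgroup $\pi_{cl}(X)$, and characterized by $\bar{h}_{\Delta}\circ\bar{h}=\Lambda$. First I would record injectivity: by Proposition \ref{characterize k0} (together with the identification of $\ker\Lambda$ recalled in Section \ref{appendix}) we have $\ker\Lambda=\pi_{cl}(X)$ \emph{exactly}, not merely $\ker\Lambda\supseteq\pi_{cl}(X)$. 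Hence if $\bar{h}_{\Delta}([\gamma]\pi_{cl}(X))$ is trivial in $\Delta(X)$, then $[\gamma]\in\ker\Lambda=\pi_{cl}(X)$, so $[\gamma]\pi_{cl}(X)$ is the identity of $\bar{\pi}_1(X)$; thus $\bar{h}_{\Delta}$ has trivial kernel.

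Next I would identify the image of $\bar{h}_{\Delta}$. Since $\bar{h}:\pi_1(X)\rightarrow\bar{\pi}_1(X)$ is surjective and $\bar{h}_{\Delta}\circ\bar{h}=\Lambda$, we get $\bar{h}_{\Delta}(\bar{\pi}_1(X))=\bar{h}_{\Delta}(\bar{h}(\pi_1(X)))=\Lambda(\pi_1(X))$. By the facts quoted from \cite{BPUU}, $\Lambda(\pi_1(X))$ is a dense, normal subgroup of $\Delta(X)$ in the inverse limit topology (which coincides with the subspace topology from $\tilde{X}$). Therefore $\bar{h}_{\Delta}$ is an injective homomorphism of $\bar{\pi}_1(X)$ onto the dense normal subgroup $\Lambda(\pi_1(X))\unlhd\Delta(X)$, which is precisely the assertion that the revised fundamental group isomorphically injects into the uniform fundamental group as a dense normal subgroup.

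For the final sentence I would compose with the isomorphism $\Delta(X)\cong\breve{\pi}_1(X)$ of Brodskiy--Dydak--Labuz--Mitra (Corollary 6.5, \cite{BD1}): the composite $\bar{\pi}_1(X)\xrightarrow{\bar{h}_{\Delta}}\Delta(X)\xrightarrow{\cong}\breve{\pi}_1(X)$ is an injective homomorphism, giving the claimed embedding of $\bar{\pi}_1(X)$ into the first shape group. Since each step is an immediate consequence of results already established, there is no real obstacle here; the only point demanding a moment's care is the first one---that $\bar{h}_{\Delta}$ is genuinely \emph{injective} rather than merely well defined---and this rests entirely on having the equality, not just the containment, $\ker\Lambda=\pi_{cl}(X)$ from Proposition \ref{characterize k0}.
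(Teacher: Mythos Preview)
Your proof is correct and follows essentially the same approach as the paper: the paper's argument is simply the discussion surrounding diagram (\ref{comm diagram delta}), noting that $\bar{h}_{\Delta}$ is injective precisely because $\ker\Lambda=\pi_{cl}(X)$ (not merely $\supseteq$), that its image $\Lambda(\pi_1(X))$ is dense and normal in $\Delta(X)$ by \cite{BPUU}, and that $\Delta(X)\cong\breve{\pi}_1(X)$ by \cite{BD1}. You have written this out a bit more explicitly, but there is no substantive difference.
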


We next use the characterization of $\pi_{cl}(X)$ to establish analogs in the revised case of certain classical properties related to the fundamental group, starting with the semilocally simply connected definition.  Recall that a path connected, locally path connected space $X$ is semilocally simply connected at $x \in X$ if and only if there is some neighborhood $U$ of $x$ such that the homomorphism $i_{\ast}:\pi_1(U,x) \rightarrow \pi_1(X,x)$ induced by the inclusion $i:U \hookrightarrow X$ is trivial.  If $X$ has a simply connected universal cover, this is equivalent to saying that every path loop based at $x$ and contained in $U$ lifts closed to the universal cover and, thus, to every other cover of $X$.  With this motivation, we define the following.

\begin{definition}
\label{semi rconn}
Let $X$ be a geodesic space with base point $*$, and let $\{X_{\varepsilon}\}$ be the $\varepsilon$-covers determined by $*$.  $X$ is semilocally $r$-simply connected at $x \in X$ - or semilocally simply connected with respect to the revised fundamental group at $x$ - if there is an open ball $U$ centered at $x$ such that every path loop contained in $U$ and based at $x$ lifts closed to $X_{\varepsilon}$ for all $\varepsilon$.  $X$ is semilocally $r$-simply connected if this property holds at each $x \in X$.  If $\bar{\pi}_1(X)$ is trivial for some \emph{(}hence, every\emph{)} base point $* \in X$, we say that $X$ is $r$-simply connected.
\end{definition}

\noindent  The algebraic formulation of semilocal simple connectivity carries over to this definition, as well.

\begin{lemma}
\label{equiv rconn}
Let $X$ be a geodesic space.  For any $x \in X$, let $\pi_{cl}(X,x)$ denote the closed lifting group in $\pi_1(X,x)$, and let $i:U \hookrightarrow X$ denote the inclusion map of a subset $U \subset X$.  The following are equivalent.
\begin{enumerate}
\item[1)] $X$ is semilocally $r$-simply connected at $x$.
\item[2)] There is an open ball $U$ centered at $x$ such that the homomorphism $i_{\ast}:\pi_1(U,x) \rightarrow \pi_1(X,x)$ has image in $\pi_{cl}(X,x)$.
\item[3)] There is an open ball $U$ centered at $x$ such that $\bar{h} \circ i_{\ast}:\pi_1(U,x) \rightarrow \bar{\pi}_1(X,x)$ is trivial.
\end{enumerate}
\noindent Moreover, if $\{X_{\varepsilon}\}_{\varepsilon > 0}$ are the $\varepsilon$-covers determined by any fixed base point, $* \in X$, then $X$ is $r$-simply connected if and only if every path loop in $X$ lifts closed to $X_{\varepsilon}$ for all $\varepsilon$.
\end{lemma}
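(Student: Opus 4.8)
The plan is to reduce every claim to Proposition \ref{characterize k0}, which converts the phrase ``lifts closed to $X_\varepsilon$ for all $\varepsilon$'' into membership in $\pi_{cl}$, combined with the base-point independence established in Lemmas \ref{first bp indep} and \ref{rfg indep bp} so that we may work with loops and $\varepsilon$-covers based at the point $x$ itself. First I would record two elementary facts for repeated use. \textbf{(a)} By Lemma \ref{first bp indep}, ``$\gamma$ lifts closed to $X_\varepsilon$'' is unambiguous for a path loop $\gamma$ based at $x$, independent of the base point used to build the $\varepsilon$-covers; applying Proposition \ref{characterize k0} with base point $x$ therefore gives $[\gamma]\in\pi_{cl}(X,x)$ if and only if $\gamma$ lifts closed to $X_\varepsilon$ for every $\varepsilon>0$. \textbf{(b)} Whether a path loop lifts closed to a given cover depends only on its fixed-endpoint homotopy class (homotopy lifting property), so if $[\gamma]_X=[\delta]_X$ then $\gamma$ and $\delta$ have identical lifting behavior to each $X_\varepsilon$.

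For $(1)\Leftrightarrow(2)$: assume $(1)$ with ball $U$. A typical element of $i_\ast(\pi_1(U,x))$ is $[\gamma]_X$ for some path loop $\gamma$ in $U$ based at $x$; by $(1)$, $\gamma$ lifts closed to every $X_\varepsilon$, so $[\gamma]_X\in\pi_{cl}(X,x)$ by fact \textbf{(a)}, whence $i_\ast(\pi_1(U,x))\subseteq\pi_{cl}(X,x)$. Conversely, assume $(2)$ with ball $U$ and let $\gamma$ be any path loop in $U$ based at $x$; then $[\gamma]_X\in i_\ast(\pi_1(U,x))\subseteq\pi_{cl}(X,x)$, so fact \textbf{(a)} gives that $\gamma$ lifts closed to $X_\varepsilon$ for all $\varepsilon$, which is $(1)$. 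The equivalence $(2)\Leftrightarrow(3)$ is immediate from the definition of $\bar h$: since $\bar h:\pi_1(X,x)\to\bar\pi_1(X,x)$ is the quotient homomorphism with kernel $\pi_{cl}(X,x)$, the composite $\bar h\circ i_\ast$ is trivial precisely when $i_\ast(\pi_1(U,x))\subseteq\pi_{cl}(X,x)$, which is $(2)$.

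For the ``moreover'' statement, $X$ is $r$-simply connected if and only if $\bar\pi_1(X,y)$ is trivial for some, hence every, $y\in X$, i.e. $\pi_{cl}(X,y)=\pi_1(X,y)$ for every $y$. If this holds, then any path loop $\gamma$ based at any $y$ has $[\gamma]\in\pi_{cl}(X,y)$, so by fact \textbf{(a)} it lifts closed to $X_\varepsilon$ for all $\varepsilon$; conversely, if every path loop of $X$ lifts closed to every $X_\varepsilon$, then for each $y$ and each path loop $\gamma$ at $y$, fact \textbf{(a)} gives $[\gamma]\in\pi_{cl}(X,y)$, forcing $\pi_{cl}(X,y)=\pi_1(X,y)$ for all $y$, i.e. $\bar\pi_1(X)$ trivial. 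There is no serious obstacle in this lemma---the argument is bookkeeping on top of Proposition \ref{characterize k0}---but two points deserve care: invoking base-point independence so that $\pi_{cl}(X,x)$ and the $\varepsilon$-covers can be referred to the common base point $x$, and observing that an element of $i_\ast(\pi_1(U,x))$ need not be represented by a loop literally lying in $U$, so the reduction genuinely requires the homotopy-invariance of lifting behavior in fact \textbf{(b)} rather than a single application of Corollary \ref{loop in a ball}.
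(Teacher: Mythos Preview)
Your proof is correct and follows essentially the same route as the paper: the equivalence of (1), (2), (3) is unwound directly from the definitions together with the base-point independence lemma, and the ``moreover'' statement is reduced to Proposition~\ref{characterize k0}. The only cosmetic difference is that for the ``moreover'' direction the paper fixes a single base point $*$ and handles a loop $\gamma$ at $x\neq *$ by conjugating to the lollipop $\alpha\gamma\alpha^{-1}$, whereas you invoke base-point independence to work directly at each $y$; these are equivalent moves.

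One small correction to your closing commentary: every element of $i_\ast(\pi_1(U,x))$ \emph{is} represented by a loop literally lying in $U$, by definition of $i_\ast$, so fact \textbf{(b)} is not actually needed there---the argument for $(1)\Rightarrow(2)$ goes through with fact \textbf{(a)} alone.
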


\begin{proof}
That $1$, $2$, and $3$ are equivalent follows clearly from the definitions and Lemma \ref{first bp indep}.  If every path loop in $X$ lifts closed to $X_{\varepsilon}$ for all $\varepsilon$, then we immediately have that $\pi_1(X,*) = \pi_{cl}(X,*)$ and $\bar{\pi}_1(X,*)$ is trivial.  Conversely, if $\bar{\pi}_1(X,*)$ is trivial, then every path loop at $*$ lifts closed to $X_{\varepsilon}$ for all $\varepsilon > 0$.  If $\gamma$ is a path loop based at $x \neq *$, then the path loop $\alpha \gamma \alpha^{-1}$ - where $\alpha$ is a fixed path from $*$ to $x$ - will lift closed to $X_{\varepsilon}$ for all $\varepsilon$.  Thus, $\gamma$ will also.\end{proof}

\vspace{.1 in}
\noindent In the next section, we show that a universal cover of a compact geodesic space $X$ is $r$-simply connected and that for such a cover to exist it is necesary and sufficient that $X$ be semilocally $r$-simply connected.

Next, let $X$ and $Y$ be geodesic spaces, and let $f:(X,x) \rightarrow (Y,y)$ be a pointed, \emph{uniformly} continuous function.  For this particular group of results, we avoid using $*$ as the base point, since it will have another usage.  Given that the Berestovskii-Plaut construction exploits the uniform structure of a metric space, it is necessary to work with uniformly continuous functions.  If $X$ is compact, then this condition, of course, follows without assumption.  There is an induced homomorphism $f_{\ast}:\pi_1(X,x) \rightarrow \pi_1(Y,y)$ defined by $[\gamma] \mapsto [f \circ \gamma]$.  Let $\bar{h}_X:\pi_1(X,x) \rightarrow \bar{\pi}_1(X,x)$ and $\bar{h}_Y:\pi_1(Y,y) \rightarrow \bar{\pi}_1(Y,y)$ denote the corresponding quotient maps.

Now, suppose $[\gamma] \in \pi_{cl}(X,x)$, and let $\varepsilon > 0$ be given.  Since $f$ is uniformly continuous, there is some $\delta > 0$ such that $d(x_1,x_2) < \delta \Rightarrow d(f(x_1),f(x_2)) < \varepsilon$.  Moreover, $\gamma$ lifts closed to $X_{\delta/3}$ by assumption.  Thus, there is a representative of $[\gamma]$, say $\gamma'$, that is a product of $\frac{\delta}{2}$-lollipops, $\alpha_1 \beta_1 \alpha_1^{-1} \cdots \alpha_k \beta_k \alpha_k^{-1}$.  Let $\lambda = f \circ \gamma' = \omega_1 \sigma_1 \omega_1^{-1} \cdots \omega_k \sigma_k \omega_k^{-1}$, where $\omega_i = f \circ \alpha_i$ and $\sigma_i = f \circ \beta_i$ for $i=1,\dots,k$.  Then $\lambda$ is a lollipop factorization of $[\lambda] = [f \circ \gamma] \in \pi_1(Y,y)$.  Fix any $i=1,\dots,k$, and let $\sigma_i(t)$ be any point on $\sigma_i$.  Since $\beta_i$ lies in a ball of radius $\frac{\delta}{2}$, we have
\[d(\beta_i(0),\beta_i(t)) < \delta \ \Rightarrow \ d(\sigma_i(0),\sigma_i(t)) = d\Bigl(f\bigl(\beta_i(0)\bigr),f\bigl(\beta_i(t)\bigr)\Bigr) < \varepsilon. \]
\noindent So, each $\sigma_i$ lies in $B_{\varepsilon}(\sigma_i(0))$, and $\omega_1\sigma_1\omega_1^{-1}\cdots\omega_k \sigma_k \omega_k^{-1}$ is an $\varepsilon$-lollipop factorization of $[f\circ \gamma]$.  By Corollary \ref{lollipop products}, $\lambda$ lifts closed to $Y_{\varepsilon}$.  Finally, since $\varepsilon$ was arbitrary, we can conclude that $[\lambda] = [f \circ \gamma] \in \pi_{cl}(Y,y)$.  In other words, we have shown that $f_{\ast}$ takes $\pi_{cl}(X,x)$ into $\pi_{cl}(Y,y)$, or that $\pi_{cl}(X,x) \subseteq ker \ (\bar{h}_Y \circ f_{\ast})$.  Applying our usual homomorphism result, we have shown the following.

\begin{lemma}
\label{induced hom}
Let $f:(X,x) \rightarrow (Y,y)$ be a uniformly continuous function between geodesic spaces \emph{(}e.g. $X$ compact and $f$ continuous\emph{)}.  Then there is a unique induced homomorphism $f_{\sharp}:\bar{\pi}_1(X,x) \rightarrow \bar{\pi}_1(Y,y)$ that is defined by 
\[f_{\sharp}([\gamma]\pi_{cl}(X,x))= \bigl( f_{\sharp} \circ \bar{h}_X\bigr)([\gamma]) \defeq \bigl(\bar{h}_Y \circ f_{\ast}\bigr)([\gamma]) =[f \circ \gamma]\pi_{cl}(Y,y)\]
\noindent and is such that the following diagram commutes.
\begin{equation}
\label{induced}
\begin{diagram}
\pi_{1}(X,x) & \rTo_{f_{\ast}} & \pi_{1}(Y,y) \\
\dTo^{\bar{h}_X} & & \dTo_{\bar{h}_Y} &  \\
\bar{\pi}_1(X,x) & \rTo^{f_{\sharp}} & \bar{\pi}_1(Y,y)
\end{diagram}
\end{equation}
\end{lemma}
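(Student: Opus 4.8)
The plan is to reduce everything to the generalized First Isomorphism Theorem recalled before diagram~(\ref{comm diagram 2}), so the only real content is the claim that $f_{\ast}$ carries $\pi_{cl}(X,x)$ into $\pi_{cl}(Y,y)$ --- equivalently, that $\pi_{cl}(X,x) \subseteq \ker(\bar{h}_Y \circ f_{\ast})$. Once that inclusion is in hand, the homomorphism theorem immediately yields a unique homomorphism $f_{\sharp}\colon \bar{\pi}_1(X,x) \to \bar{\pi}_1(Y,y)$ with $f_{\sharp} \circ \bar{h}_X = \bar{h}_Y \circ f_{\ast}$, which is exactly the commutativity of diagram~(\ref{induced}); evaluating on a representative then gives $f_{\sharp}([\gamma]\pi_{cl}(X,x)) = [f\circ\gamma]\pi_{cl}(Y,y)$, the stated formula, and uniqueness of $f_{\sharp}$ is part of that same theorem.

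To prove the inclusion, I would fix $[\gamma] \in \pi_{cl}(X,x)$ and an arbitrary $\varepsilon > 0$, and use uniform continuity of $f$ to choose $\delta > 0$ with $d(x_1,x_2) < \delta \Rightarrow d(f(x_1),f(x_2)) < \varepsilon$. Since $[\gamma] \in \pi_{cl}(X,x) \subseteq K_{\delta/3}$, the loop $\gamma$ lifts closed to $X_{\delta/3}$, so Lemma~\ref{geodesic loop reduction} provides a representative $\gamma'$ of $[\gamma]$ that is a product of $\tfrac{\delta}{2}$-lollipops $\alpha_1\beta_1\alpha_1^{-1}\cdots\alpha_k\beta_k\alpha_k^{-1}$. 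Pushing forward, $f\circ\gamma' = \omega_1\sigma_1\omega_1^{-1}\cdots\omega_k\sigma_k\omega_k^{-1}$ with $\omega_i = f\circ\alpha_i$ and $\sigma_i = f\circ\beta_i$; because each head $\beta_i$ lies in a $\tfrac{\delta}{2}$-ball, the choice of $\delta$ forces each $\sigma_i$ into $B_{\varepsilon}(\sigma_i(0))$. Hence $f\circ\gamma'$ is an $\varepsilon$-lollipop factorization of $[f\circ\gamma]$, so by Corollary~\ref{lollipop products} the loop $f\circ\gamma'$ --- and therefore $f\circ\gamma$ --- lifts closed to $Y_{\varepsilon}$, i.e. $[f\circ\gamma] \in K_{\varepsilon}$ in $\pi_1(Y,y)$. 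As $\varepsilon$ was arbitrary, $[f\circ\gamma] \in \bigcap_{\varepsilon > 0} K_{\varepsilon} = \pi_{cl}(Y,y)$ by Proposition~\ref{characterize k0}.

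The main obstacle is exactly this inclusion: a uniformly continuous map is not a covering map and in general does not respect path lifts, so one cannot directly ``lift $f\circ\gamma$'' to compare endpoints. The lollipop reduction of Lemma~\ref{geodesic loop reduction} is what makes the closed-lifting property transportable, since it recasts membership in $K_{\varepsilon}$ as a statement about path loops confined to small balls --- a feature that uniform continuity does preserve, with the loss of scale controlled by the modulus $\delta \mapsto \varepsilon$. Everything after the inclusion is formal: $\pi_{cl}(X,x) \subseteq \ker(\bar{h}_Y \circ f_{\ast})$ plus the homomorphism theorem give existence and uniqueness of $f_{\sharp}$, and the defining formula in the statement follows by chasing a single representative around diagram~(\ref{induced}).
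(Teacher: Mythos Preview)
Your proposal is correct and follows essentially the same approach as the paper: both reduce to showing $f_{\ast}(\pi_{cl}(X,x)) \subseteq \pi_{cl}(Y,y)$ via the $\tfrac{\delta}{2}$-lollipop factorization from Lemma~\ref{geodesic loop reduction} (applied after lifting closed to $X_{\delta/3}$), push the heads forward under $f$ into $\varepsilon$-balls using uniform continuity, invoke Corollary~\ref{lollipop products}, and then apply the generalized First Isomorphism Theorem to produce $f_{\sharp}$. The constants, lemmas, and overall structure match the paper's argument exactly.
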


\vspace{.1 in}
\begin{corollary}
\label{homotopy equiv}
Assume the hypotheses of the preceding lemma.  If $f$ is also a homotopy equivalence with a uniformly continuous homotopy inverse $g:Y \rightarrow X$, then $f_{\sharp}$ is an isomorphism.  That is, uniformly homotopy equivalent geodesic spaces - in particular, homotopy equivalent, compact geodesic spaces - have isomorphic revised fundamental groups.
\end{corollary}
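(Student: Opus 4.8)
The plan is to deduce Corollary \ref{homotopy equiv} formally from Lemma \ref{induced hom} together with the functoriality of the operation $f \mapsto f_{\sharp}$, exactly as one proves that a homotopy equivalence induces an isomorphism on ordinary fundamental groups. First I would record the functoriality facts: for the identity map $\mathrm{id}_X:(X,x)\to(X,x)$ one has $(\mathrm{id}_X)_{\ast} = \mathrm{id}_{\pi_1(X,x)}$, and hence by the uniqueness clause of Lemma \ref{induced hom} (the commuting square \eqref{induced} determines $f_{\sharp}$ uniquely) $(\mathrm{id}_X)_{\sharp} = \mathrm{id}_{\bar{\pi}_1(X,x)}$; and for a composable pair of pointed, uniformly continuous maps $f:(X,x)\to(Y,y)$ and $k:(Y,y)\to(Z,z)$ one has $(k\circ f)_{\ast} = k_{\ast}\circ f_{\ast}$, so pasting the two squares \eqref{induced} for $f$ and $k$ gives a commuting square for $k\circ f$ with bottom arrow $k_{\sharp}\circ f_{\sharp}$, whence uniqueness forces $(k\circ f)_{\sharp} = k_{\sharp}\circ f_{\sharp}$. (Note $k\circ f$ is again uniformly continuous, so Lemma \ref{induced hom} applies.)

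Next I would handle the base-point bookkeeping, which is the one genuinely nontrivial wrinkle. A homotopy equivalence $f:X\to Y$ with homotopy inverse $g:Y\to X$ satisfies $g\circ f \simeq \mathrm{id}_X$ and $f\circ g \simeq \mathrm{id}_Y$, but these homotopies are free, not based, so $g\circ f$ need not fix $x$ on the nose; one only knows $(g\circ f)(x) = x'$ for some $x'$ joined to $x$ by a path $\alpha$ (the track of the homotopy), and the standard fact is that $(g\circ f)_{\ast}$ equals the change-of-base-point isomorphism $\pi_1(X,x)\to\pi_1(X,x')$ followed by conjugation, composed back — more precisely $(g\circ f)_{\ast} = \alpha_{\#}\circ(\mathrm{id})$, an isomorphism of $\pi_1(X,x)$ onto $\pi_1(X,x')$ up to the canonical identification. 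By part 3 of Lemma \ref{rfg indep bp}, the base-point change isomorphism $F([\gamma]) = [\alpha\gamma\alpha^{-1}]$ carries $\pi_{cl}(X,x)$ isomorphically onto $\pi_{cl}(X,x')$ and so descends to an isomorphism $\bar{\pi}_1(X,x)\cong\bar{\pi}_1(X,x')$; moreover this descended map is exactly the induced map of the (uniformly continuous, since $X$ is compact or uniformly so) ``conjugate by $\alpha$'' operation, and one checks it commutes with the $\bar{h}$'s in the manner of \eqref{induced}. Thus, passing everything to the revised groups, $(g\circ f)_{\sharp} = g_{\sharp}\circ f_{\sharp}$ is an isomorphism $\bar{\pi}_1(X,x)\to\bar{\pi}_1(X,x)$ — an automorphism, in fact equal to conjugation by the class of $\alpha$, hence certainly bijective. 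Symmetrically, $(f\circ g)_{\sharp} = f_{\sharp}\circ g_{\sharp}$ is an automorphism of $\bar{\pi}_1(Y,y)$.

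From $g_{\sharp}\circ f_{\sharp}$ bijective we get $f_{\sharp}$ injective, and from $f_{\sharp}\circ g_{\sharp}$ bijective we get $f_{\sharp}$ surjective; hence $f_{\sharp}$ is an isomorphism, which is the assertion. The ``uniformly homotopy equivalent'' version is literally the same argument (the homotopies and homotopy inverse are already assumed uniformly continuous), and the ``homotopy equivalent compact geodesic spaces'' version follows because on a compact metric space every continuous map — in particular every map in sight here — is automatically uniformly continuous, so Lemma \ref{induced hom} always applies.

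I expect the main obstacle to be the base-point issue in the previous paragraph: one must confirm that the free homotopy $g\circ f\simeq\mathrm{id}_X$ induces, after quotienting by $\pi_{cl}$, an honest \emph{automorphism} of $\bar{\pi}_1(X,x)$ rather than merely an isomorphism onto a conjugate copy living at a different base point, and that this is compatible with the composition law for $(\cdot)_{\sharp}$. This is where Lemma \ref{rfg indep bp} (part 3, and its explicit description of the isomorphism via $F$) and the uniqueness clause of Lemma \ref{induced hom} do the real work; everything else is the formal two-out-of-three deduction. A tidy alternative that sidesteps most of the base-point fuss: first prove a ``revised fundamental group'' homotopy-invariance lemma directly for based homotopies and then invoke the standard reduction of a free homotopy equivalence to a based one up to change of base point, citing Lemma \ref{rfg indep bp} only once to identify the change-of-base-point map on revised groups.
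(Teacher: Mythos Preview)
Your approach is correct in outline and close in spirit to the paper's, but the paper organizes the argument more economically and thereby sidesteps the very base-point tangle you flag as the main obstacle. The paper gets surjectivity of $f_{\sharp}$ for free from the commuting square \eqref{induced}: since $f$ is a homotopy equivalence, the classical $f_{\ast}$ is already an isomorphism, and then $f_{\sharp}\circ \bar{h}_X = \bar{h}_Y\circ f_{\ast}$ with $\bar{h}_Y$ surjective forces $f_{\sharp}$ onto. Only injectivity requires bringing in $g$, and the paper does that by a direct element chase through $g_{\sharp}\circ f_{\sharp}$ together with Lemma \ref{rfg indep bp}, exactly as you propose.

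Your symmetric route---deduce injectivity from $g_{\sharp}\circ f_{\sharp}$ bijective and surjectivity from $f_{\sharp}\circ g_{\sharp}$ bijective---has a small gap as written: the map in the second composite is not the same $f_{\sharp}$. Since $g$ sends $y$ to $x'=g(y)$, the composite $f\circ g$ is based at $y\mapsto f(x')$, so functoriality gives $(f\circ g)_{\sharp}=f'_{\sharp}\circ g_{\sharp}$ with $f'_{\sharp}:\bar{\pi}_1(X,x')\to\bar{\pi}_1(Y,f(x'))$, not your original $f_{\sharp}:\bar{\pi}_1(X,x)\to\bar{\pi}_1(Y,y)$. The standard repair is to argue instead that $g_{\sharp}$ is injective (from $f'_{\sharp}\circ g_{\sharp}$ bijective) and surjective (from $g_{\sharp}\circ f_{\sharp}$ bijective), hence bijective, and then cancel $g_{\sharp}$ in $g_{\sharp}\circ f_{\sharp}$. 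You clearly anticipate this issue; the paper's trick of reading surjectivity off the diagram simply avoids having to untangle it at all.
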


\begin{proof}
If $f$ is a homotopy equivalence, then $f_{\ast}$ is an isomorphism, and the above diagram immediately shows that $f_{\sharp}$ is surjective.  If $g(y)=x'$ and $g_{\ast}:\pi_1(Y,y) \rightarrow \pi_1(X,x')$ is the induced isomorphism, it follows that there is a path $\alpha$ from $x'$ to $x$ such that $(g_{\ast} \circ f_{\ast})([\gamma]) = [\alpha \gamma \alpha^{-1}]$ for any $[\gamma] \in \pi_1(X,x)$.  This is just a consequence of the classical proof that homotopy equivalence induces an isomorphism of fundamental groups.  Next, let $\bar{h}_X:\pi_1(X,x) \rightarrow \bar{\pi}_1(X,x)$ and $\bar{h}'_X:\pi_1(X,x') \rightarrow \bar{\pi}_1(X,x')$ denote the quotient maps at the respective base points in $X$, and let $g_{\sharp}:\bar{\pi}_1(Y,y) \rightarrow \bar{\pi}_1(X,x')$ be the homomorphism induced by $g$. Note that $\bar{h}'_X \circ g_{\ast} \circ f_{\ast} = g_{\sharp} \circ f_{\sharp} \circ \bar{h}_X$, since
\[(g_{\sharp} \circ f_{\sharp} \circ \bar{h}_X)([\gamma])=g_{\sharp}([f\circ \gamma]\pi_{cl}(Y,y)) =[g\circ f \circ \gamma]\pi_{cl}(X,x')  \]
\[(\bar{h}'_X \circ g_{\ast} \circ f_{\ast})([\gamma]) = \bar{h}'_X([g \circ f \circ \gamma])=[g \circ f \circ \gamma]\pi_{cl}(X,x').\]
\noindent  Finally, suppose $[\gamma] \in \pi_1(X,x)$ and $f_{\sharp}([\gamma]\pi_{cl}(X,x)) = (f_{\sharp} \circ \bar{h}_X)([\gamma]) = \pi_{cl}(Y,y) \in \bar{\pi}_1(Y,y)$.  Then $(g_{\sharp} \circ f_{\sharp} \circ \bar{h}_X)([\gamma]) = \pi_{cl}(X,x') \in \bar{\pi}_1(X,x')$, implying that $(\bar{h}'_X \circ g_{\ast} \circ f_{\ast})([\gamma]) = \pi_{cl}(X,x')$.  But this implies that $g_{\ast}(f_{\ast}([\gamma])) \in \pi_{cl}(X,x')\subset \pi_1(X,x')$, from which we have $[\alpha \gamma \alpha^{-1}] \in \pi_{cl}(X,x')$.  By Lemma \ref{rfg indep bp}, it follows that $[\gamma] \in \pi_{cl}(X,x)$, or $[\gamma]\pi_{cl}(X,x) = \pi_{cl}(X,x)$. Hence, $f_{\sharp}$ is injective.\end{proof}

\vspace{.1 in}
\section{Universal Covers and Group Geometry}

Since $\{K_{\varepsilon}\}_{\varepsilon > 0}$ is a nested decreasing set, there is a well-defined notion of $K_{\varepsilon}$ being \textit{eventually constant}. This just means there is a $\delta$ so that $K_{\varepsilon} = K_{\varepsilon'}$ for all $\varepsilon$, $\varepsilon' \leq \delta$.  Equivalently, there is some $\delta > 0$ such that $\pi_{cl}(X) = \bigcap_{\varepsilon > 0} K_{\varepsilon} = K_{\delta}$.  This turns out to be an equivalent way of saying that the critical spectrum is finite.

\begin{lemma}
\label{univ cover first eq}
If $X$ is a compact geodesic space, then the following are equivalent.
\begin{enumerate}
\item[1)] $Cr(X)$ is finite.
\item[2)] $\{K_{\varepsilon}\}_{\varepsilon > 0}$ is eventually constant. Specifically, $\pi_{cl}(X) = K_{\varepsilon}$ for $0 < \varepsilon \leq \min Cr(X)$.
\item[3)] $\bar{h}_{\varepsilon}:\bar{\pi}_1(X) \rightarrow \pi_{\varepsilon}(X)$ is an isomorphism for all $0<\varepsilon \leq \min Cr(X)$.
\item[4)] $\bar{h}_{\varepsilon}:\bar{\pi}_1(X) \rightarrow \pi_{\varepsilon}(X)$ is an isomorphism for some $\varepsilon$.
\end{enumerate}
\end{lemma}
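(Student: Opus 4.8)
The plan is to run the cycle $1 \Rightarrow 2$, $2 \Rightarrow 1$, $2 \Rightarrow 3$, $3 \Rightarrow 4$, $4 \Rightarrow 2$, which yields all the equivalences. First I would dispose of the degenerate case $Cr(X) = \emptyset$: then $X_{\varepsilon} = X$ and $\pi_{\varepsilon}(X)$ is trivial for every $\varepsilon > 0$, so $K_{\varepsilon} = \pi_1(X)$ for all $\varepsilon$, whence $\pi_{cl}(X) = \pi_1(X)$ and $\bar{\pi}_1(X)$ is trivial, and all four statements hold under the reading ``$0 < \varepsilon \le \min Cr(X)$'' $=$ ``all $\varepsilon > 0$.'' So from now on assume $Cr(X) \neq \emptyset$, and once $Cr(X)$ is known to be finite write $\varepsilon_0 \defeq \min Cr(X) > 0$. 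Throughout I would use that $\ker \bar{h}_{\varepsilon} = \bar{h}(K_{\varepsilon})$ is trivial precisely when $K_{\varepsilon} = \pi_{cl}(X)$ (since $\pi_{cl}(X) \subseteq K_{\varepsilon}$ always and $\ker \bar{h} = \pi_{cl}(X)$), so that, $\bar{h}_{\varepsilon}$ being surjective, ``$\bar{h}_{\varepsilon}$ is an isomorphism'' is literally the same as ``$K_{\varepsilon} = \pi_{cl}(X)$.''

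The one genuinely new ingredient is a preliminary observation: \emph{each critical value forces a strict drop in the nested family $\{K_{\varepsilon}\}$}, i.e.\ if $c \in Cr(X)$ then $K_c \subsetneq K_{\delta}$ for every $\delta > c$. To see this, recall that $c$ critical means there is a nontrivial $[\alpha]_c \in \pi_c(X)$ lying in $\ker \Phi_{\delta c}$ for all $\delta > c$; since $X$ is geodesic, $h_c$ is surjective, so choose $[\sigma] \in \pi_1(X)$ with $h_c([\sigma]) = [\alpha]_c$. Then $[\sigma] \notin K_c$, while for $\delta > c$ commutativity of (\ref{comm diagram 1}) gives $h_{\delta}([\sigma]) = \Phi_{\delta c}(h_c([\sigma])) = \Phi_{\delta c}([\alpha]_c)$, which is trivial, so $[\sigma] \in K_{\delta}$; as $K_c \subseteq K_{\delta}$ always, the inclusion is proper.

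Now the implications. For $1 \Rightarrow 2$, the stabilization fact recalled in Section \ref{appendix} (applicable since $Cr(X)$ is finite) says $\Phi_{\varepsilon \delta}$ is an isomorphism for $0 < \delta < \varepsilon \le \varepsilon_0$, and then $K_{\varepsilon} = \ker h_{\varepsilon} = \ker(\Phi_{\varepsilon \delta} \circ h_{\delta}) = \ker h_{\delta} = K_{\delta}$, the middle equality coming from (\ref{comm diagram 1}) and the third from injectivity of $\Phi_{\varepsilon\delta}$; so $K_{\varepsilon}$ is constant on $(0, \varepsilon_0]$ and, being the stable value of a decreasing family, equals $\bigcap_{\varepsilon > 0} K_{\varepsilon} = \pi_{cl}(X)$. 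For $2 \Rightarrow 1$: if $\pi_{cl}(X) = K_{\delta}$ for some $\delta$ but $Cr(X)$ were infinite, then $Cr(X)$ is a strictly decreasing null sequence, hence contains some $c < \delta$, and the drop observation gives $K_c \subsetneq K_{\delta}$, contradicting $\pi_{cl}(X) \subseteq K_c \subseteq K_{\delta} = \pi_{cl}(X)$. For $2 \Rightarrow 3$: by $2 \Rightarrow 1$ the value $\varepsilon_0 = \min Cr(X)$ is available, and the computation in $1 \Rightarrow 2$ shows $K_{\varepsilon} = \pi_{cl}(X)$ for all $0 < \varepsilon \le \varepsilon_0$, so $\bar{h}_{\varepsilon}$ is an isomorphism there. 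The implication $3 \Rightarrow 4$ is immediate ($(0, \varepsilon_0]$ being nonempty), and $4 \Rightarrow 2$: if $\bar{h}_{\varepsilon_1}$ is an isomorphism for some $\varepsilon_1$, then $K_{\varepsilon_1} = \pi_{cl}(X)$, which is exactly the eventual-constancy assertion 2.

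I expect the only real content to be the drop observation of the second paragraph together with its pairing, in $2 \Rightarrow 1$, with the sandwich $\pi_{cl}(X) \subseteq K_c \subseteq K_{\delta}$; everything else is bookkeeping with the commuting square (\ref{comm diagram 1}) and the already-recorded surjectivity of $h_{\varepsilon}$ and $\bar{h}_{\varepsilon}$. The main thing to handle carefully is organizational rather than mathematical: keeping the status of $\min Cr(X)$ straight (it is only available after finiteness has been derived, which is why the cycle is arranged the way it is) and treating $Cr(X) = \emptyset$ separately at the outset.
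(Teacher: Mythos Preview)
Your proof is correct. The organization differs from the paper's, which runs the bare cycle $1 \Rightarrow 2 \Rightarrow 3 \Rightarrow 4 \Rightarrow 1$ rather than your $1 \Leftrightarrow 2$ together with $2 \Rightarrow 3 \Rightarrow 4 \Rightarrow 2$, and the substantive step is handled differently. For the passage back to finiteness, the paper proves $4 \Rightarrow 1$ by a diagram chase at the level of the groups $\pi_{\varepsilon}(X)$: from $\bar{h}_{\varepsilon}$ an isomorphism and the square with $\Phi_{\varepsilon\delta}$, one forces each $\Phi_{\varepsilon\delta}$ (hence each $\Phi_{\delta\tau}$) to be an isomorphism for $0<\tau<\delta\le\varepsilon$, so there are no critical values below $\varepsilon$. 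You instead work entirely inside $\pi_1(X)$ via the kernels: your ``drop observation'' (a critical value $c$ forces $K_c \subsetneq K_{\delta}$ for $\delta>c$) is proved directly from the definition of a critical value and surjectivity of $h_c$, and then combined with the sandwich $\pi_{cl}(X)\subseteq K_c \subseteq K_{\delta}$. This is arguably a little more self-contained than the paper's $1 \Rightarrow 2$, which cites the external fact (Lemma 21 in \cite{PWetal}) that non-injectivity of $\Phi_{\varepsilon\delta}$ produces a critical value; your observation is essentially the contrapositive reformulated at the $K_{\varepsilon}$ level. You also treat the degenerate case $Cr(X)=\emptyset$ explicitly, which the paper leaves implicit.
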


\begin{proof}
$(1 \Rightarrow 2)$ Let $\varepsilon'= \min Cr(X)$. Lemma 21 in \cite{PWetal} shows that if $\Phi_{\varepsilon \delta}$ is non-injective for $\delta < \varepsilon$, then there is a critical value in $[\delta,\varepsilon)$. So, if $\delta < \varepsilon \leq \varepsilon'$, and if we had $K_{\delta} \subsetneq K_{\varepsilon}$, it would follow from commutative diagram (\ref{comm diagram 1}) that $\Phi_{\varepsilon \delta}$ is non-injective, giving a critical value below $\varepsilon'$.  This is a contradiction.

$(2 \Rightarrow 3)$ Fix $0 < \varepsilon \leq \min Cr(X)$, and suppose $\bar{h}_{\varepsilon}([\gamma]\pi_{cl}(X)) = \bar{h}_{\varepsilon}([\gamma]K_{\varepsilon}) = \tilde{*} \in \pi_{\varepsilon}(X)$.  Then $h_{\varepsilon}([\gamma]) = \tilde{*}$, and $[\gamma]\in K_{\varepsilon}=\pi_{cl}(X)$. So, $ker \ \bar{h}_{\varepsilon}$ is trivial.  The fact that $3$ implies $4$ is obvious.

$(4 \Rightarrow 1)$ Suppose $\bar{h}_{\varepsilon}$ is an isomorphism for some $\varepsilon$, and let $0 < \delta <\varepsilon$ be given. As with diagram (\ref{comm diagram 1}), the following also commutes.
\begin{diagram}
\pi_{\delta}(X) & \rTo_{\Phi_{\varepsilon \delta}} & \pi_{\varepsilon}(X) \\
\uTo^{\bar{h}_{\delta}} & & \uTo_{\bar{h}_{\varepsilon}} &  \\
\bar{\pi}_1(X) & \rTo^{\iota} & \bar{\pi}_1(X)
\end{diagram}
\noindent Since $\bar{h}_{\varepsilon} \circ \iota$ is an isomorphism and $\bar{h}_{\delta}$ and $\Phi_{\varepsilon \delta}$ are surjective, this forces each of the latter maps to be injective, also.  Finally, if $0 < \tau < \delta \leq \varepsilon$, then $\Phi_{\varepsilon \tau} =\Phi_{\varepsilon \delta} \circ \Phi_{\delta \tau} \Rightarrow \Phi_{\delta \tau} = \Phi_{\varepsilon \delta}^{-1} \circ \Phi_{\varepsilon \tau}$, and the right hand side is an isomorphism.  Thus, $\Phi_{\delta \tau}$ is an isomorphism for all $0 < \tau < \delta \leq \varepsilon$, showing that there are no critical values below $\varepsilon$.\end{proof}

\vspace{.1 in}
To prove Theorem \ref{main theorem 1}, we need a technical result giving sufficient conditions for $X$ to be semilocally $r$-simply connected.  The lemma relies on a result of Cannon and Conner regarding an Artinian property of fundamental groups.  For reference purposes, we state the portion of that theorem that is needed here as a lemma, specializing it to metric spaces.

\begin{lemma}[\textbf{Theorem 4.4, \cite{CC}}]
\label{cc lemma}
Let $X$ be a metric space, $x$ a base point, and $f:\pi_1(X,x) \rightarrow L$ a homomorphism to a group, $L$.  For $n=1,2,\dots$, let $B_n$ be the open ball $B_{1/n}(x)$, and let $G_n$ be the image of the natural map $\pi_1(B_n,x) \rightarrow \pi_1(X,x)$.  If $L$ is countable, then the sequence $f(G_1) \supseteq f(G_2) \supseteq \cdots$ is eventually constant.
\end{lemma}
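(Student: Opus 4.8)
The plan is to argue by contradiction via a standard infinite-concatenation argument in the style of Cannon--Conner: if a countable group could separate the classes of sufficiently many ``independent'' small loops, one would produce uncountably many distinct values inside a countable group. Assume the decreasing sequence $f(G_1) \supseteq f(G_2) \supseteq \cdots$ is not eventually constant. Since it is decreasing, we may pass to indices $n_1 < n_2 < \cdots$ with $f(G_{n_1}) \supsetneq f(G_{n_2}) \supsetneq \cdots$. For each $k$, pick an element of $f(G_{n_k}) \setminus f(G_{n_{k+1}})$ and write it as $f([\beta_k])$ with $[\beta_k] \in G_{n_k}$; since $[\beta_k]$ lies in the image of $\pi_1(B_{1/n_k},x) \to \pi_1(X,x)$, we may take $\beta_k$ to be a loop based at $x$ whose image is contained in $B_{1/n_k}(x)$.

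Next I would form, for each infinite $S = \{k_1 < k_2 < \cdots\} \subseteq \mathbb{N}$, the loop $\gamma_S$ that runs $\beta_{k_i}$ (suitably reparametrized) over the subinterval $[1 - 2^{-(i-1)}, 1 - 2^{-i}]$ of $[0,1]$ for each $i$, with $\gamma_S(1) = x$. The one point to verify is continuity at $t = 1$: for $t$ near $1$ the point $\gamma_S(t)$ lies in $\beta_{k_i}([0,1]) \subseteq B_{1/n_{k_i}}(x)$ with $k_i \to \infty$, so $\gamma_S(t) \to x$ — this is exactly where the hypothesis that the balls $B_n = B_{1/n}(x)$ shrink to $x$ is used. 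Thus each $[\gamma_S] \in \pi_1(X,x)$ is defined, and $f$ sends all $2^{\aleph_0}$ of the classes $[\gamma_S]$ into the countable group $L$. By the pigeonhole principle there are distinct infinite sets $S \neq T$ with $f([\gamma_S]) = f([\gamma_T])$.

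To extract the contradiction, let $j$ be the least element of the symmetric difference $S \triangle T$, relabeled so that $j \in S$. Because $S$ and $T$ agree below $j$, the loops $\gamma_S$ and $\gamma_T$ are, up to reparametrization, $\delta \beta_j \rho_S$ and $\delta \rho_T$, where $\delta$ is the finite concatenation of the $\beta_k$ with $k < j$ in $S$ (equivalently in $T$), and $\rho_S$, resp.\ $\rho_T$, is the concatenation of the $\beta_k$ with $k > j$ in $S$, resp.\ in $T$. Each such $\beta_k$ has image in $B_{1/n_k}(x) \subseteq B_{1/n_{j+1}}(x)$, so $\rho_S$ and $\rho_T$ are loops in $B_{1/n_{j+1}}(x)$, whence $[\rho_S], [\rho_T] \in G_{n_{j+1}}$. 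Applying $f$ to $[\gamma_S] = [\delta][\beta_j][\rho_S]$ and $[\gamma_T] = [\delta][\rho_T]$ and cancelling $f([\delta])$ gives $f([\beta_j]) = f([\rho_T])\, f([\rho_S])^{-1} \in f(G_{n_{j+1}})$, since $f(G_{n_{j+1}})$ is a subgroup of $L$. This contradicts the choice of $\beta_j$, so $f(G_1) \supseteq f(G_2) \supseteq \cdots$ must be eventually constant.

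The step I expect to require the most care is the interface between the combinatorics and the topology. Concretely: (i) one must make the reparametrization of the infinite concatenations precise enough to guarantee continuity at the limit point, noting that this uses nothing beyond $B_n = B_{1/n}(x)$ and $n_k \to \infty$; and (ii) after splitting at the first index of disagreement, one must confirm that both tails $\rho_S$ and $\rho_T$ stay inside the single ball $B_{1/n_{j+1}}(x)$, so that their classes genuinely lie in $G_{n_{j+1}}$ — this containment is precisely what ties the pigeonhole conclusion back to the groups in the statement. It is worth noting that we never need the assignment $S \mapsto [\gamma_S]$ to be injective; the argument only uses that a countable group cannot distinguish uncountably many of its elements.
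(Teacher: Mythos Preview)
The paper does not prove this lemma: it is quoted verbatim as Theorem~4.4 of Cannon--Conner \cite{CC} and used as a black box in the proof of Proposition~\ref{local lift property}. Your argument is correct and is essentially the Cannon--Conner proof --- the infinite-concatenation/pigeonhole trick you describe is exactly how the result is established in \cite{CC}, and your handling of the two delicate points (continuity of $\gamma_S$ at $t=1$, and the containment of both tails $\rho_S,\rho_T$ in $B_{1/n_{j+1}}(x)$) is sound.
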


\begin{proposition}
\label{local lift property}
Let $X$ be a geodesic space, and suppose $\bar{\pi}_1(X)$ is countable.  Then for each $x \in X$ there is an open ball centered at $x$, $B_{r_x}(x)$, with the following property: if $\gamma$ is a path loop in $B_{r_x}(x)$, not necessarily based at $x$, then it lifts closed to $X_{\varepsilon}$ for all $\varepsilon > 0$.  In particular, if $\bar{\pi}_1(X)$ is countable, then $X$ is semilocally $r$-simply connected.
\end{proposition}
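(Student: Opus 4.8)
The plan is to apply the Cannon--Conner Artinian property (Lemma~\ref{cc lemma}) to the quotient map $\bar h\colon\pi_1(X,x)\to\bar\pi_1(X,x)$, which lands in a countable group by hypothesis, and then to upgrade the resulting stabilization statement to a \emph{triviality} statement using the fact that loops confined to small balls die in all $\varepsilon$-covers (Corollary~\ref{loop in a ball}). Once that upgrade is in place, moving the basepoint around inside the ball by a geodesic and a lollipop handles loops not based at $x$, and the case of loops based at $x$ is precisely semilocal $r$-simple connectivity (Definition~\ref{semi rconn}).

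In detail, fix $x\in X$, set $B_n\defeq B_{1/n}(x)$, and let $G_n\subseteq\pi_1(X,x)$ be the image of the inclusion-induced map $\pi_1(B_n,x)\to\pi_1(X,x)$. Since $\bar\pi_1(X,x)$ is countable (base-point independence of the revised fundamental group is Lemma~\ref{rfg indep bp}), Lemma~\ref{cc lemma} applied to $f=\bar h$ produces an $N$ with $\bar h(G_n)=\bar h(G_N)$ for all $n\ge N$. I would then show that $\bar h(G_N)$ is trivial. Let $[\gamma]\in G_N$ and fix $\varepsilon>0$; choose $n\ge N$ with $1/n<\varepsilon$. Since $\bar h([\gamma])\in\bar h(G_N)=\bar h(G_n)$, there is a path loop $\gamma_n$ in $B_n$ based at $x$ with $\bar h([\gamma_n])=\bar h([\gamma])$. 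By Corollary~\ref{loop in a ball}, $[\gamma_n]$ lies in $K_{1/n}(x)\subseteq K_\varepsilon(x)$ (the groups $K_\delta$ being nested), while $\bar h([\gamma_n])=\bar h([\gamma])$ gives $[\gamma]^{-1}[\gamma_n]\in\ker\bar h=\pi_{cl}(X,x)\subseteq K_\varepsilon(x)$; hence $[\gamma]\in K_\varepsilon(x)$. As $\varepsilon>0$ was arbitrary, $[\gamma]\in\bigcap_{\varepsilon>0}K_\varepsilon(x)=\pi_{cl}(X,x)$, so $\bar h([\gamma])$ is trivial.

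Now put $r_x\defeq 1/N$. Let $\gamma$ be a path loop in $B_{r_x}(x)$ based at a point $y$, and let $\alpha$ be a minimal geodesic from $x$ to $y$; being minimizing, $\alpha$ lies in $B_{r_x}(x)$, so $\alpha\gamma\alpha^{-1}$ is a path loop in $B_{r_x}(x)$ based at $x$ and therefore $[\alpha\gamma\alpha^{-1}]\in G_N$. By the preceding step, $[\alpha\gamma\alpha^{-1}]\in\pi_{cl}(X,x)$, so by Proposition~\ref{characterize k0} the loop $\alpha\gamma\alpha^{-1}$ lifts closed to $X_\varepsilon$ for every $\varepsilon>0$; since a lollipop lifts closed to a regular cover exactly when its head does, $\gamma$ lifts closed to $X_\varepsilon$ for every $\varepsilon>0$. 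Specializing to $y=x$ shows $X$ is semilocally $r$-simply connected at $x$ (Definition~\ref{semi rconn}), and since $x\in X$ was arbitrary, $X$ is semilocally $r$-simply connected.

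I expect the only real obstacle to be the step that $\bar h(G_N)$ is trivial rather than merely eventually constant. Lemma~\ref{cc lemma} by itself yields only stabilization of the images $\bar h(G_n)$; it is the geometric input of Corollary~\ref{loop in a ball} --- that a path loop inside a ball of radius $1/n$ represents an element of $K_{1/n}$ --- that forces the stabilized subgroup into $\bigcap_{\varepsilon>0}K_\varepsilon=\pi_{cl}(X)$, i.e. to vanish in $\bar\pi_1(X)$. The remaining ingredients (base-point independence of $\bar\pi_1$, the geodesic staying in the ball, and the lollipop reduction) are routine and already available in the material above.
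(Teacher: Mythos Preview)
Your proof is correct and follows essentially the same approach as the paper: apply Cannon--Conner's Artinian lemma to $\bar h$, use Corollary~\ref{loop in a ball} to force the stabilized image into $\pi_{cl}(X)$, and handle off-center loops by a minimal-geodesic lollipop. Your presentation is arguably cleaner in that you explicitly isolate the statement ``$\bar h(G_N)$ is trivial'' as an intermediate step, whereas the paper runs the same argument loop-by-loop to conclude directly that each $\gamma$ lifts closed to every $X_{1/n}$; the logical content is identical.
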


\begin{proof}
In keeping with our notation, we will denote the base point by $*$ instead of $x$.  The countability of $\bar{\pi}_1(X)$ is independent of the base point by Lemma \ref{rfg indep bp}.  We apply Lemma \ref{cc lemma} with $L=\bar{\pi}_1(X,*)$, $f = \bar{h}:\pi_1(X,*) \rightarrow \bar{\pi}_1(X,*)$, and $G_n$ the image of $\pi_1(B_n,*) \rightarrow \pi_1(X,*)$, from which it follows that $\bar{h}(G_1) \supseteq \bar{h}(G_2) \supseteq \cdots$ is eventually constant.

We first show that $\bar{h}(G_n) = \{g\pi_{cl}(X) \: : \: g \ \text{has a representative in }B_n \ \text{based at }*\}$ for each $n$.  If $g \in \pi_1(X,*)$ and has a representative in $B_n$ based at $*$, then $g$ is in the image of the map $\pi_1(B_n,*) \rightarrow \pi_1(X,*)$.  That is, $g \in G_n$, implying that $g\pi_{cl}(X)=\bar{h}(g) \in \bar{h}(G_n)$.  Conversely, let $g\pi_{cl}(X) \in \bar{h}(G_n)$ be given, so that $g\pi_{cl}(X) = \bar{h}(g')$ for some $g' \in G_n$.  Then $g\pi_{cl}(X) = g'\pi_{cl}(X)$, and $g'$ has a representative in $B_n$ based at $*$, proving the equality.

Now, since the nested sequence $\{\bar{h}(G_n)\}$ is eventually constant, we can choose $m \in \mathbb N$ large enough so that $\bar{h}(G_m) = \bigcap_n \bar{h}(G_n)$.  We claim that $B_m = B_{1/m}(*)$ is the desired ball.

First, suppose $\gamma$ is a path loop based at $*$ and lying in $B_m$, and let $g = [\gamma] \in \pi_1(X,*)$.  Then $\bar{h}(g) = g\pi_{cl}(X) \in \bar{h}(G_m)$.  Let $n$ be fixed but arbitrary, so that $\bar{h}(g) \in \bar{h}(G_n)$. This implies that $g\pi_{cl}(X) = g'\pi_{cl}(X)$, where $g'$ has a representative in $B_n$ based at $*$, and this further implies that $g=g'l$ for some $l \in \pi_{cl}(X)$.  This means that $\gamma$ is homotopic to a product $\alpha \beta$, where $\alpha$ is a representative of $g'$ in $B_n$ based at $*$ and $\beta \in l$.  But $\beta$ lifts closed to $X_{\varepsilon}$ for all $\varepsilon > 0$.  Moreover, since $\alpha$ is a path loop at $*$ lying in $B_{1/n}(*)$, $\alpha$ will lift closed to $X_{1/n}$ by Lemma \ref{loop in a ball}.  Hence, $\gamma$ lifts closed to $X_{1/n}$, and this holds for all $n$.  Since the $\varepsilon$-covers are monotone (i.e. $\delta < \varepsilon$ implies $X_{\delta}$ covers $X_{\varepsilon}$), we see that $\gamma$ lifts closed to $X_{\varepsilon}$ for all $\varepsilon > 0$.

This proves the result for path loops based at $*$ in $B_m$.  To finish the proof, let $\gamma$ be any path loop in $B_m$, not necessarily based at $*$.  Let $\lambda$ be a minimal geodesic from $*$ to the initial/terminal point of $\gamma$, and consider the $\frac{1}{m}$-lollipop $\lambda \gamma \lambda^{-1}$.  The geodesic $\lambda$ must lie in $B_m(*)$, so the whole path loop $\lambda \gamma \lambda^{-1}$ does, as well.  By the previous conclusion, $\lambda \gamma \lambda^{-1}$ lifts closed to $X_{\varepsilon}$ for each $\varepsilon > 0$, which implies that $\gamma$ does, also.  Finally, $*$ was arbitrary, so, letting $*$ range over all of $X$, we obtain the desired result.\end{proof}

\begin{corollary}
\label{local lift property2}
If $X$ is a compact geodesic space with a countable revised fundamental group, $\bar{\pi}_1(X)$, then there is a positive real number $r$ such that the following property holds:  if $\gamma$ is any path loop in $X$ that is contained in an open ball of radius $r$, then $\gamma$ lifts closed to $X_{\varepsilon}$ for all $\varepsilon > 0$.  In particular, if $X$ is compact and $\bar{\pi}_1(X)$ is countable, then $X$ is uniformly semilocally $r$-simply connected.
\end{corollary}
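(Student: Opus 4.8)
The plan is to obtain the uniform radius $r$ from Proposition \ref{local lift property} by a straightforward compactness argument; no new ideas beyond that proposition are needed, only a finite-subcover packaging of its pointwise conclusion.

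First I would apply Proposition \ref{local lift property}: since $\bar{\pi}_1(X)$ is countable, to each $x \in X$ it assigns an open ball $B_{r_x}(x)$ with the property that \emph{every} path loop lying in $B_{r_x}(x)$, with any base point, lifts closed to $X_{\varepsilon}$ for all $\varepsilon > 0$. The halved balls $\{B_{r_x/2}(x)\}_{x \in X}$ cover the compact space $X$, so finitely many of them, say $B_{r_{x_1}/2}(x_1),\dots,B_{r_{x_k}/2}(x_k)$, already do. I then set $r \defeq \tfrac12 \min_{1 \le i \le k} r_{x_i} > 0$.

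To check that this $r$ works, let $\gamma$ be a path loop contained in some open ball $B_r(\bar x)$. Pick $i$ with $\bar x \in B_{r_{x_i}/2}(x_i)$; then for any $y \in B_r(\bar x)$ the triangle inequality gives $d(y,x_i) \le d(y,\bar x) + d(\bar x,x_i) < r + \tfrac12 r_{x_i} \le r_{x_i}$, so $B_r(\bar x) \subseteq B_{r_{x_i}}(x_i)$. Hence $\gamma$ lies in $B_{r_{x_i}}(x_i)$ and, by Proposition \ref{local lift property}, lifts closed to $X_{\varepsilon}$ for all $\varepsilon > 0$. For the last sentence of the corollary, observe that for each fixed $x \in X$ the ball $B_r(x)$ then satisfies the requirement of Definition \ref{semi rconn} at $x$, and since $r$ is the same for every $x$, this is exactly uniform semilocal $r$-simple connectivity.

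I do not anticipate any real obstacle: the substance is entirely contained in Proposition \ref{local lift property}, and what remains is the routine observation that compactness turns its locally varying radii into one global radius. The only point requiring a little care is the triangle-inequality bookkeeping with the factor $\tfrac12$ — equivalently, one could instead quote the Lebesgue number lemma for the cover $\{B_{r_x}(x)\}_{x \in X}$ — which guarantees that an arbitrarily centered radius-$r$ ball is swallowed by one of the finitely many chosen balls $B_{r_{x_i}}(x_i)$.
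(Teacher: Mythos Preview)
Your proof is correct and is essentially the same as the paper's: both derive the uniform radius from Proposition~\ref{local lift property} via a compactness argument, with the paper invoking the Lebesgue number lemma directly while you carry out the equivalent halved-ball finite-subcover argument by hand (and even note the Lebesgue alternative yourself).
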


\begin{proof}
For each $x \in X$, let $r_x > 0$ be as in the previous proposition, so that any path loop lying in $B_{r_x}(x)$ lifts closed to $X_{\varepsilon}$ for all $\varepsilon > 0$.  Choose a positive number $r$ so that $2r$ is less than a Lebesgue number for the covering $\{B_{r_x}(x)\}$.  Then any ball of radius $r$ lies inside some element of this covering.  Thus, if $\gamma$ is any path loop in $X$ that lies in a ball of radius $r$, it will lift closed to $X_{\varepsilon}$ for all $\varepsilon > 0$.\end{proof}

\vspace{.1 in} Building on the theme of characterizing local properties of $X$ in terms of lifts of path loops, we can use this idea to classify the topological singularities in a compact geodesic space.

\begin{definition}
\label{regular singular point}
A point $x$ in a geodesic space $X$ is regular if $X$ is semilocally simply connected at $x$, in the traditional sense.  A point that is not regular is called singular.  A singular point $x$ is degenerate if every neighborhood of $x$ contains a nontrivial path loop based at $x$ that lifts closed to $X_{\varepsilon}$ for all $\varepsilon > 0$.  A singular point will be called sequentially singular if there is a sequence of path loops based at $x$, say $\gamma_n$, and a strictly decreasing sequence of positive numbers, $r_n \searrow 0$, such that for each $n$ $\gamma_n$ lies in $B_{r_n}(x)$ \emph{(}hence, lifts closed to $X_{r_n}$\emph{)} but lifts open to $X_{r_{n+1}}$.
\end{definition}

\noindent This definition does, indeed, capture all singularities of $X$.

\begin{lemma}
If $X$ is geodesic and $x \in X$ is singular, then it is either degenerate or sequentially singular.
\end{lemma}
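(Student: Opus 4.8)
The plan is to prove the contrapositive within the class of singular points: assuming $x$ is singular but \emph{not} degenerate, I will produce the two sequences witnessing that $x$ is sequentially singular.

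First I would unwind the two hypotheses. Since $x$ is not degenerate, there is a neighborhood $U_0$ of $x$ that contains no nontrivial path loop based at $x$ lifting closed to $X_\varepsilon$ for all $\varepsilon > 0$; equivalently, every path loop in $U_0$ based at $x$ that is nontrivial in $\pi_1(X,x)$ must lift open to $X_\varepsilon$ for at least one $\varepsilon > 0$. Since $x$ is singular, $X$ is not semilocally simply connected at $x$, so for every $r > 0$ the inclusion-induced map $\pi_1(B_r(x),x) \to \pi_1(X,x)$ is nontrivial; that is, every ball $B_r(x)$ contains a path loop based at $x$ that is not null-homotopic in $X$.

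Next I would construct $\{r_n\}$ and $\{\gamma_n\}$ recursively. Pick $r_1 > 0$ with $B_{r_1}(x) \subseteq U_0$. Given $r_n$ with $B_{r_n}(x) \subseteq U_0$, choose (by singularity) a path loop $\gamma_n$ based at $x$, contained in $B_{r_n}(x)$, and nontrivial in $\pi_1(X,x)$. By Corollary \ref{loop in a ball}, $\gamma_n$ lifts closed to $X_{r_n}$, i.e. $[\gamma_n] \in K_{r_n}$. Because $\gamma_n$ lies in $U_0$ and is nontrivial, non-degeneracy forces $\gamma_n$ to lift open to some $X_{\varepsilon_n}$ with $\varepsilon_n > 0$, which by Lemma \ref{charac lifts} says $[\gamma_n] \notin K_{\varepsilon_n}$. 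Set $r_{n+1} \defeq \min\{\varepsilon_n,\ r_n/2\}$. Since $r_{n+1} \leq \varepsilon_n$ we have $K_{r_{n+1}} \subseteq K_{\varepsilon_n}$, hence $[\gamma_n] \notin K_{r_{n+1}}$, so by Lemma \ref{charac lifts} again $\gamma_n$ lifts open to $X_{r_{n+1}}$. Also $B_{r_{n+1}}(x) \subseteq B_{r_n}(x) \subseteq U_0$, so the recursion continues. Finally, $r_{n+1} \leq r_n/2$ forces $r_n \searrow 0$, and by construction each $\gamma_n$ lies in $B_{r_n}(x)$ (hence lifts closed to $X_{r_n}$) and lifts open to $X_{r_{n+1}}$; thus $x$ is sequentially singular.

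The one point needing care — and really the crux — is the passage from ``$\gamma_n$ lifts open to $X_{\varepsilon_n}$'' to ``$\gamma_n$ lifts open to $X_{r_{n+1}}$'' for the chosen smaller index. This is not a genuine obstacle but rather the key observation: Lemma \ref{charac lifts} identifies ``lifts closed to $X_\varepsilon$'' with ``$[\gamma] \in K_\varepsilon$,'' and the family $\{K_\varepsilon\}_{\varepsilon > 0}$ is decreasing, so a loop that lifts open to some $\varepsilon$-cover lifts open to all smaller ones. Everything else is bookkeeping: extracting the right neighborhood $U_0$ from the failure of degeneracy, and using singularity at each stage to keep the chosen loops nontrivial.
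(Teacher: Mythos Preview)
Your proof is correct and follows essentially the same strategy as the paper's: assume $x$ is singular but not degenerate, use non-degeneracy to find a ball in which every nontrivial loop eventually lifts open, and then recursively build $\{r_n\}$ and $\{\gamma_n\}$ by alternately invoking singularity (to get a nontrivial loop in the current ball) and non-degeneracy (to get an $\varepsilon_n$ where it lifts open, then shrink below it). The only cosmetic differences are that the paper bounds $r_n < 1/n$ rather than $r_{n+1} \le r_n/2$ to force $r_n \searrow 0$, and the paper phrases the choice of $\varepsilon_n$ as the \emph{largest} value at which $\gamma_n$ lifts open, whereas you only take some such value --- your version is actually slightly cleaner since it avoids any implicit appeal to the existence of a maximum.
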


\begin{proof}
If $x$ is degenerate, then we are done.  So, we assume that $x$ is not degenerate, and we will show that it is sequentially singular.  Since $x$ is not degenerate, there is some $r_1 > 0$ such that every nontrivial path loop based at $x$ and lying in $B_{r_1}(x)$ - and there is at least one, since $x$ is singular - eventually lifts open to $X_{\varepsilon}$ for some $\varepsilon$.  We may assume without loss of generality that $r_1 < 1$.  Choose any such path loop, label it $\gamma_1$, and let $\varepsilon_1$ be the largest value such that $\gamma_1$ lifts open to $X_{\varepsilon_1}$.  Note that we must have $\varepsilon_1 < r_1$, since the fact that $\gamma_1$ lies in $B_{r_1}(x)$ implies that it lifts closed to $X_{r_1}$.

Next, choose $r_2$ such that $0 < r_2 < \min \bigl\{\frac{1}{2},\varepsilon_1\bigr\} < r_1$.  There is a nontrivial path loop, $\gamma_2$, based at $x$ and lying in $B_{r_2}(x)$, and we let $\varepsilon_2$ be the largest value such that $\gamma_2$ lifts open to $X_{\varepsilon_2}$.  As before, we must have $\varepsilon_2 < r_2$, and note that $\gamma_1$ lifts open to $X_{r_2}$.  We continue  this process, inductively, choosing $0 < r_n < \min \bigl\{\frac{1}{n},\varepsilon_{n-1}\bigr\} < r_{n-1}$ at each step.  Then $\gamma_n$ will lie in $B_{r_n}(x)$ but lift open to $X_{r_{n+1}}$, and $r_n \searrow 0$ since $r_n < \frac{1}{n}$.  Thus, $x$ is sequentially singular.\end{proof}

Of course, a point may be both degenerate and sequentially singular.  Note that if $x$ is a point at which $X$ is not homotopically Hausdorff, so that there is a nontrivial small path loop based at $x$, then $x$ is degenerate by Corollary \ref{loop in a ball}. The reason for the name `sequentially singular' is that such points produce a sequence of critical values converging to $0$.  They can be thought of as topological singularities of the type in the Hawaiian earring, but only with regard to the lift properties of the path loops making up the sequence $\{\gamma_n\}$. It is not necessarily true, for instance, that the presence of a sequentially singular point implies that the Hawaiian earring fundamental group isomorphically embeds into $\pi_1(X)$; see Example \ref{infinite torus}.  The correct comparison is made more precise in the following lemma.

\begin{lemma}
\label{sequentially singular}
If $X$ is geodesic and $x\in X$ is a sequentially singular point, then $Cr(X)$ is infinite and $\inf Cr(X) = 0$.  Conversely, if $X$ is compact geodesic and $Cr(X)$ is infinite, then there is a sequentially singular point in $X$.
\end{lemma}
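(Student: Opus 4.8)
The plan is to prove the two implications separately.

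\textbf{Forward direction.} Suppose $x$ is sequentially singular, with witnessing path loops $\gamma_n$ based at $x$ and radii $r_n \searrow 0$ such that $\gamma_n \subseteq B_{r_n}(x)$ --- so $\gamma_n$ lifts closed to $X_{r_n}$ by Corollary \ref{loop in a ball} --- while $\gamma_n$ lifts open to $X_{r_{n+1}}$. Fix a path $\lambda$ from $*$ to $x$ and set $g_n = [\lambda \gamma_n \lambda^{-1}] \in \pi_1(X,*)$. By Lemma \ref{charac lifts} together with the criterion that a lollipop lifts closed to a regular cover iff its head does, $g_n \in K_{r_n}$ but $g_n \notin K_{r_{n+1}}$. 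Commutative diagram (\ref{comm diagram 1}) then shows $h_{r_{n+1}}(g_n)$ is a nontrivial element of $ker \ \Phi_{r_n,r_{n+1}}$, so $\Phi_{r_n,r_{n+1}}$ is non-injective. Invoking the result of \cite{PWetal} recalled in the proof of Lemma \ref{univ cover first eq} --- non-injectivity of $\Phi_{\varepsilon\delta}$ for $\delta<\varepsilon$ forces a critical value in $[\delta,\varepsilon)$ --- we obtain a critical value $c_n \in [r_{n+1}, r_n)$. Since $\{r_n\}$ is strictly decreasing, these half-open intervals are pairwise disjoint, so the $c_n$ are distinct and $Cr(X)$ is infinite; and $r_{n+1} \le c_n < r_n \to 0$ gives $\inf Cr(X) = 0$.

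\textbf{Converse.} I would argue the contrapositive: assuming $X$ is compact geodesic with no sequentially singular point, show $Cr(X)$ is finite. The first step is a local claim: for each $x$ there is $s_x > 0$ such that every path loop based at $x$ inside $B_{s_x}(x)$ lifts closed to $X_\varepsilon$ for all $\varepsilon$. If this failed at $x$, then for every $s>0$ there would be a nontrivial loop at $x$ in $B_s(x)$ lifting open to some $X_\varepsilon$; building radii $r_1 > r_2 > \cdots \searrow 0$ inductively with $r_1 < 1$ and $r_{n+1} = \min\{\tfrac{1}{n+1}, \varepsilon_n\}$ --- where $\varepsilon_n < r_n$ is a value to which the chosen loop $\gamma_n \subseteq B_{r_n}(x)$ lifts open, the strict inequality forced by Corollary \ref{loop in a ball} --- and using the nesting $K_{r_{n+1}} \subseteq K_{\varepsilon_n}$ to conclude that $\gamma_n$ also lifts open to $X_{r_{n+1}}$, we would realize $x$ as sequentially singular, a contradiction. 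This is exactly the construction used in the classification lemma above.

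\textbf{Globalization.} Using compactness, cover $X$ by finitely many balls $B_{s_{x_i}}(x_i)$, let $\lambda_0$ be a Lebesgue number of this finite cover, and fix $\ell > 0$ small enough that every ball of radius $2\ell$ has diameter below $\lambda_0$. Then every path loop contained in a ball of radius $\ell$, based anywhere, lifts closed to all $X_\varepsilon$: if $\beta$ is based at $w$ and lies in $B_\ell(z)$, then $\beta \subseteq B_{2\ell}(w) \subseteq B_{s_{x_i}}(x_i)$ for some $i$ with $w \in B_{s_{x_i}}(x_i)$, and conjugating $\beta$ by a minimal geodesic from $x_i$ to $w$ (which stays inside $B_{s_{x_i}}(x_i)$) produces a loop at $x_i$ in $B_{s_{x_i}}(x_i)$, which lifts closed everywhere by the local claim, whence $\beta$ does too. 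Now suppose toward a contradiction that some critical value $c < \tfrac{2}{3}\ell$ exists. By the discussion following Lemma \ref{charac lifts}, there is a path loop $\eta$ at $*$ lifting open to $X_c$ but closed to $X_\delta$ for all $\delta > c$. Choosing $\varepsilon$ with $c < \varepsilon < \tfrac{2}{3}\ell$ gives $[\eta] \in K_\varepsilon$, so by Lemma \ref{geodesic loop reduction} $\eta$ is homotopic to a finite product of $\tfrac{3}{2}\varepsilon$-lollipops, each head contained in a ball of radius $\tfrac{3}{2}\varepsilon < \ell$. Each such head lifts closed to $X_c$ by the preceding, hence so does each lollipop factor, hence so does $\eta$ (since $K_c$ is a subgroup) --- contradicting that $\eta$ lifts open to $X_c$. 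Thus $Cr(X)$ contains no critical value below $\tfrac{2}{3}\ell$; being a discrete subset of the compact interval $[\tfrac{2}{3}\ell, diam(X)]$, it is finite.

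\textbf{Main obstacle.} The forward direction is essentially formal given the bonding-map characterization of critical values. The real work is the converse, and within it the localization: converting the existence of arbitrarily small critical values into a single point near which loops degenerate in the precise sequential pattern of Definition \ref{regular singular point}. The lollipop factorization of Lemma \ref{geodesic loop reduction} supplies loops of controlled size, the Lebesgue number supplies a uniform scale across the compact space, and the elementary but essential observation that ``lifts closed to $X_\varepsilon$'' defines a subgroup is what forces at least one lollipop head of an open-lifting loop to itself lift open; these three ingredients together close the argument.
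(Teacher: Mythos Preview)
Your forward direction is essentially identical to the paper's: both extract a critical value in each interval $[r_{n+1},r_n)$ from the non-injectivity of the bonding map, yielding infinitely many critical values accumulating at $0$.

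Your converse, however, takes a genuinely different route. The paper argues \emph{directly}: given the decreasing sequence $\varepsilon_n \searrow 0$ of critical values, it uses Lemma \ref{geodesic loop reduction} to produce, for each $n$, a small loop $\beta_n$ (lying in a ball of radius $2\varepsilon_n$, based at some point $x_n$) that lifts open to $X_{\varepsilon_n}$; compactness then supplies a limit point $x$ of the $x_n$, and conjugating the $\beta_n$ by geodesics from $x$ to $x_n$ yields the witnessing sequence at $x$. You instead argue the \emph{contrapositive}: assuming no sequentially singular points, you first establish pointwise that every $x$ has a ball in which all loops lift closed to every $X_\varepsilon$ (your ``local claim,'' which is exactly semilocal $r$-simple connectedness at $x$), then globalize via a Lebesgue number and use the lollipop factorization to rule out small critical values. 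Both approaches lean on compactness and Lemma \ref{geodesic loop reduction}, but yours uses compactness to uniformize a local property, while the paper's uses it to locate an accumulation point. A pleasant byproduct of your route is that it gives a direct, self-contained proof of the implication $5 \Rightarrow 6$ in Theorem \ref{main theorem 1}, which the paper reaches only circuitously through $5 \Rightarrow 2 \Rightarrow 3 \Rightarrow 6$ and the Cannon--Conner Artinian result (Lemma \ref{cc lemma}). One minor tightening: in your final sentence you should invoke that $Cr(X)$ is \emph{closed} as well as discrete in $\mathbb R_+$ (stated in Section \ref{appendix}) to conclude finiteness on the compact interval.
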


\begin{proof}
The first statement simply follows from the definition of sequentially singular.  There is a critical value in the interval $[r_2,r_1)$, detecting where $\gamma_1$ is unraveled.  Likewise, there is a critical value in $[r_3,r_2)$, detecting where $\gamma_2$ is unraveled.  Continuing, since $r_n \searrow 0$, we obtain a strictly decreasing sequence of critical values converging to $0$.

For the second statement, assume $X$ is compact.  Since $Cr(X)$ is closed, discrete, and bounded above in $\mathbb R_+$, the fact that it is infinite means that it must be a sequence of critical values strictly decreasing to $0$.  Let $\varepsilon_n$ be this sequence.  Using the fact that critical values detect where path loops are unraveled, by successively applying Lemma \ref{geodesic loop reduction}, we can inductively construct a sequence of path loops, $\beta_n$, based at points, $x_n$, such that each $\beta_n$ lies in a ball of radius $2\varepsilon_n$ but lifts open to $X_{\varepsilon_n}$.  Since $X$ is compact, we can choose a convergent subsequence of $\{x_n\}$ so that $d(x_n,x)$ is strictly decreasing and satisfies $d(x_n,x) < \frac{1}{n}$.  By reindexing if necessary, we will simply assume that $x_n \rightarrow x$ and has this property.  For each $n$, let $\alpha_n$ be a minimal geodesic from $x$ to $x_n$, and let $\gamma_n = \alpha_n \beta_n \alpha_n^{-1}$.  Then $\gamma_n$ lies in $B_{4\varepsilon_n + 1/n}(x)$ but lifts open to $X_{\varepsilon_n}$.  Choose a subsequence $\{n_k\} \subset \mathbb N$ so that 
\[4\varepsilon_{n_1} + \frac{1}{n_1} > \varepsilon_{n_1} > 4\varepsilon_{n_2} + \frac{1}{n_2} > \varepsilon_{n_2} > 4\varepsilon_{n_3} + \frac{1}{n_3} > \varepsilon_{n_3} > \cdots,\]
\noindent and let $r_k = 4\varepsilon_{n_k} + \frac{1}{n_k}$. Then $\gamma_{n_k}$ lies in $B_{r_k}(x)$ but lifts open to $X_{r_{k+1}}$.\end{proof}

\vspace{.1 in} 
\noindent In light of the fact that a compact geodesic space has a finite critical spectrum if and only if it has a universal cover, the previous result tells us that if $X$ has a universal cover, then its singular points, if any, are all degenerate and non-sequentially singular.  Thus, the degenerate points, in this case, are the only obstructions to $X$ being semilocally simply connected.

\begin{lemma}
\label{cover properties}
If a compact geodesic space, $X$, has a universal cover, $\hat{X}$, then the following hold: \emph{1)} $\hat{X}$ is $r$-simply connected; \emph{2)} its covering group is $\pi_{cl}(X)$; \emph{3)} its deck group is $\bar{\pi}_1(X)$; \emph{4)} and its singular points, if any, are degenerate and non-sequentially singular.
\end{lemma}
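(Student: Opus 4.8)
\textit{Plan.} The plan is to reduce the statement to the identification of $\hat{X}$ with an $\varepsilon$-cover of $X$, and to isolate the one genuinely new assertion, that $\hat{X}$ is $r$-simply connected, as the single nontrivial point. Since $X$ has a universal cover, $Cr(X)$ is finite (the known equivalence $1\Leftrightarrow 2$ of Theorem \ref{main theorem 1}); fix $\varepsilon_0>0$ with $\varepsilon_0\leq\min Cr(X)$ (any $\varepsilon_0>0$ if $Cr(X)=\emptyset$). By the discussion in Section \ref{appendix}, $\varphi_{\varepsilon_0}:X_{\varepsilon_0}\rightarrow X$ is itself a traditional universal cover, with deck group $\pi_{\varepsilon_0}(X)$, so there is a covering equivalence $\Psi:X_{\varepsilon_0}\rightarrow\hat{X}$ with $f\circ\Psi=\varphi_{\varepsilon_0}$, where $f:\hat{X}\rightarrow X$ is the given covering. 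Transporting along $\Psi$ yields parts 2 and 3 at once: the covering group of $\hat{X}$ equals that of $X_{\varepsilon_0}$, which is $K_{\varepsilon_0}$ by Lemma \ref{charac lifts}, and $K_{\varepsilon_0}=\pi_{cl}(X)$ by part 2 of Lemma \ref{univ cover first eq}; likewise the deck group of $\hat{X}$ equals $\pi_{\varepsilon_0}(X)$, which is isomorphic to $\bar{\pi}_1(X)$ via $\bar{h}_{\varepsilon_0}$ by part 3 of Lemma \ref{univ cover first eq}. Part 4 is equally quick: finiteness of $Cr(X)$ forces $X$ to have no sequentially singular point (the contrapositive of the first statement of Lemma \ref{sequentially singular}, using that $X$ is compact geodesic), and so, by the preceding lemma stating that a singular point is degenerate or sequentially singular, each singular point of $X$ is degenerate and vacuously non-sequentially singular.

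For part 1 the plan is to prove that $\hat{X}$ admits no nontrivial connected cover; once that is known, each $\varepsilon$-cover $(\hat{X})_{\varepsilon}\rightarrow\hat{X}$ (being a connected cover of the geodesic space $\hat{X}$) is trivial, so every path loop in $\hat{X}$ lifts closed to $(\hat{X})_{\varepsilon}$ for every $\varepsilon>0$, and by the last statement of Lemma \ref{equiv rconn} this is exactly the assertion that $\hat{X}$ is $r$-simply connected. So let $p:Z\rightarrow\hat{X}$ be a connected cover, and fix a base point $\hat{*}\in\hat{X}$ with $f(\hat{*})=*$. Then $f\circ p:Z\rightarrow X$ is again a connected cover of $X$ (a composition of metric coverings between geodesic spaces), so by the universal property of $\hat{X}$ there is a cover $q:\hat{X}\rightarrow Z$ with $f\circ p\circ q=f$; set $\psi=p\circ q:\hat{X}\rightarrow\hat{X}$, a covering map with $f\circ\psi=f$. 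Since $\hat{X}\simeq X_{\varepsilon_0}$ is a \emph{regular} cover of $X$, its deck group acts transitively on $f^{-1}(*)$, so we may pick a deck transformation $\sigma$ of $f$ with $\sigma(\psi(\hat{*}))=\hat{*}$; then $\sigma\circ\psi$ is a lift of $f:\hat{X}\rightarrow X$ through $f$ fixing $\hat{*}$, so by uniqueness of lifts on the connected space $\hat{X}$ we get $\sigma\circ\psi=\mathrm{id}_{\hat{X}}$, i.e. $\psi=\sigma^{-1}$ is a homeomorphism. Consequently $q$ is injective; a surjective injective covering map is a homeomorphism; and then $p=\psi\circ q^{-1}$ is a homeomorphism, so the cover $p$ is trivial.

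The step I expect to be the main obstacle is the claim that the self-cover $\psi$ of $\hat{X}$ lying over $X$ must be a deck transformation: this depends on $\hat{X}\simeq X_{\varepsilon_0}$ being a \emph{regular} cover (which is available from Section \ref{appendix}) together with uniqueness of lifts through covering maps, and it is the only place where the possible non-compactness of $\hat{X}$ could intervene. A secondary point that needs care is the assertion that $f\circ p$ is itself a cover of $X$ in the sense used in this paper; this is routine for geodesic (hence locally path connected) spaces, and in any case can be bypassed by running the same regularity argument against the $\varepsilon$-covers $(\hat{X})_{\varepsilon}$ directly instead of against an arbitrary cover $Z$. Everything else is bookkeeping around the equivalence $\Psi$ and standard properties of $\varepsilon$-covers.
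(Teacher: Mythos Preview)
Your handling of parts 2 and 3 matches the paper's exactly. For part 4 there is a small slip: the ``its'' in the statement refers to $\hat X$, not to $X$ (the paper's own proof establishes that $X_{\varepsilon_0}$ has no sequentially singular points), whereas your argument addresses the singular points of $X$. This is harmless, since your proof of part 1 already shows that every loop in $\hat X$ lifts closed to every $(\hat X)_\varepsilon$, which immediately rules out sequentially singular points in $\hat X$; you should just say so explicitly.

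Your route to part 1 is genuinely different from the paper's. The paper argues directly with discrete homotopy: given a loop $\tilde{\gamma}$ in $X_{\varepsilon_0}$ that hypothetically lifts open to some $(X_{\varepsilon_0})_{\tau}$, it projects $\tilde{\gamma}$ to $\gamma$ in $X$, observes $[\gamma]\in K_{\varepsilon_0}=\pi_{cl}(X)$, factors $\gamma$ into $\eta$-lollipops for $\eta<\min(\tau,\varepsilon_0/2)$ via Lemma~\ref{geodesic loop reduction}, and lifts that factorization back up using that $\varphi_{\varepsilon_0}$ is an isometry on $\varepsilon_0/2$-balls, contradicting Corollary~\ref{lollipop products}. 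Your argument is instead categorical: you show $\hat X$ admits no nontrivial connected cover by combining the universal property of $\hat X$ with the regularity of $\varphi_{\varepsilon_0}$ and uniqueness of lifts, whence each $(\hat X)_\varepsilon\to\hat X$ is trivial. Your approach is conceptually cleaner and explains why the result is automatic (a categorical universal cover has no nontrivial covers), while the paper's approach stays entirely inside the lollipop machinery and sidesteps any composition-of-covers issues. The one point you flag---that $f\circ p$ must itself be a cover of $X$---is the only real subtlety; your proposed bypass of running the argument only for $p:(\hat X)_\varepsilon\to\hat X$ is sound, since then both $f$ and $p$ are uniform local isometries and balls of radius $\min(\varepsilon_0,\varepsilon)/2$ in $X$ are evenly covered by the composition.
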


\begin{proof}
We fix a base point $*$.  Since $X$ has a universal cover, $Cr(X)$ is finite, and we can take $\hat{X}$ to be $X_{\varepsilon}$ for any $0 < \varepsilon \leq \min Cr(X)$.  The covering group of $X_{\varepsilon}$ is $K_{\varepsilon}$, which equals $\pi_{cl}(X)$ by Lemma \ref{univ cover first eq}.  The deck group of $X_{\varepsilon}$ is $\pi_{\varepsilon}(X)$, and recall that the groups $\pi_{\delta}(X)$ are all isomorphic for $0 < \delta \leq \min Cr(X)$.  Since $\bar{\pi}_1(X) = \pi_1(X)/\pi_{cl}(X) = \pi_1(X)/K_{\varepsilon} \cong \pi_{\varepsilon}(X)$, $\bar{\pi}_1(X)$ is the deck group of the universal cover.

For parts 1 and 4, we take, as usual, $\tilde{*} \in X_{\varepsilon}$ as our base point in the universal cover.  To prove that $X_{\varepsilon}$ is $r$-simply connected, we only need to show that $\pi_1(X_{\varepsilon},\tilde{*}) \subset \pi_{cl}(X_{\varepsilon},\tilde{*})$, where $\pi_{cl}(X_{\varepsilon})$ is the closed lifting group of the universal cover at $\tilde{*}$.  Recall that none of the fundamental constructions in Section 3 required compactness, so they apply equally well to the universal cover.  Suppose there is a path loop $\tilde{\gamma}$ at $\tilde{*} \in X_{\varepsilon}$ that lifts closed to $(X_{\varepsilon})_{\delta}$ but open to $(X_{\varepsilon})_{\tau}$ for some $0 < \tau < \delta$.  Let $\gamma = \varphi_{\varepsilon}(\tilde{\gamma})$, so that $[\gamma] \in \pi_{cl}(X)$.  Choose $\eta < \min \{\tau, \varepsilon/2\}$.  Since $\gamma$ lifts closed to $X_{2\eta/3}$, it is homotopic to a product of $\eta$-lollipops, $\alpha_1 \beta_1 \alpha_1^{-1} \cdots \alpha_k \beta_k \alpha_k^{-1}$, with each $\beta_i$ lying in a ball of radius $\eta$.  Let $\tilde{\gamma}'$ denote the lift of this product to $\tilde{*}$, so that $\tilde{\gamma}$ and $\tilde{\gamma}'$ are homotopic.  The cover $\varphi_{\varepsilon}:X_{\varepsilon} \rightarrow X$ is an isometry on $\frac{\varepsilon}{2}$-balls (Theorem 2.2.5, \cite{W2}), so $\tilde{\gamma}'$ is not only closed but the lift of each $\beta_i$ lies in a ball of radius $\eta < \tau$.  In other words, $\tilde{\gamma}$ is homotopic to a product of $\tau$-lollipops, which, by Corollary \ref{lollipop products}, contradicts that $\tilde{\gamma}$ lifts open to $(X_{\varepsilon})_{\tau}$.  Hence, every path loop at $\tilde{*}$ lifts closed to $(X_{\varepsilon})_{\delta}$ for all $\delta >0$, which means that $\pi_1(X_{\varepsilon},\tilde{*}) = \pi_{cl}(X_{\varepsilon}, \tilde{*})$ and that $X_{\varepsilon}$ has no sequentially singular points.\end{proof}

\vspace{.1 in}
We can now prove our primary theorem.

\begin{proof}[\textbf{Proof of Theorem \emph{\ref{main theorem 1}}}]
We have already remarked in the introduction that $1 \Leftrightarrow 2$. That $2 \Rightarrow 3iii$ follows from Lemma \ref{univ cover first eq} and the fact (see Section 2) that the $\varepsilon$-groups are always finitely presented.  The equivalence of $2$ and $5$ follows from Lemma \ref{sequentially singular}.  

To see that $3i$ implies $2$, assume $\bar{\pi}_1(X)$ is countable, and suppose, toward a contradiction, that $Cr(X)$ is not finite.  Let $r>0$ be as in the conclusion of Corollary \ref{local lift property2}, so that any path loop lying in a ball of radius $r$ lifts closed to every $X_{\varepsilon}$.  If $Cr(X)$ is not finite, then there is a critical value, $\varepsilon$, such that $\varepsilon < \frac{2r}{3}$.  This means that we can find a path loop at $*$ that lifts open to $X_{\varepsilon}$ but closed to $X_{\delta}$ for all $\delta > \varepsilon$.  Choose $\delta$ so that $\varepsilon < \delta < \frac{2r}{3}$.  Then Lemma \ref{geodesic loop reduction} shows that $\gamma$ is path homotopic to a product of $\frac{3\delta}{2}$-lollipops based at $*$, say $\alpha_1 \beta_1 \alpha_1^{-1} \alpha_2 \beta_2 \alpha_2^{-1} \cdots \alpha_n \beta_n \alpha_n^{-1}$. Since $\frac{3\delta}{2} < r$, each such $\beta_i$ lifts closed to $X_{\tau}$ for all $\tau > 0$, which means that $\gamma$ lifts closed to $X_{\varepsilon}$, a contradiction.  Hence, $Cr(X)$ is finite.  This shows the equivalence of $1$, $2$, $3i$, $3ii$, $3iii$, and $5$.

The implication $3i \Rightarrow 6$ follows from Corollary \ref{local lift property2}.  On the other hand, if $X$ is semilocally $r$-simply connected, then we can use compactness and the fact that geodesic balls are path connected to find a uniform positive radius, $r$, so that any path loop lying in a ball of radius $r$ lifts closed to $X_{\varepsilon}$ for all $\varepsilon$.  Then the argument in the previous paragraph goes through without change to show that $X$ has no critical values less than $\frac{2r}{3}$.  Thus, $6$ implies $2$.

Next, note that $4i$ implies $3i$, since $\bar{h}_{\Delta}:\bar{\pi}_1(X) \rightarrow \Delta(X)$ is injective.  Hence, $4i$ implies $2$. Conversely, suppose $2$ holds, so that, in particular, $\bar{\pi}_1(X)$ is finitely presented.  The finiteness of $Cr(X)$ implies that the $\varepsilon$-covers stabilize and that the UU-cover is homeomorphic to $X_{\varepsilon}$ for sufficiently small $\varepsilon$.  Hence, the UU-cover is path connected.  This means that $\Lambda:\pi_1(X) \rightarrow \Delta(X)$ is surjective, and, thus, $\bar{h}_{\Delta}$ is an isomorphism.  So $\Delta(X)$ is finitely presented, and $2 \Rightarrow 4iii \Rightarrow 4ii \Rightarrow 4i$.  

Finally, we showed $\bar{\pi}_1(X) \cong \Delta(X)$ in preceding argument.  The previous lemma establishes the rest of the conclusions in the last statement of Theorem \ref{main theorem 1}.\end{proof}

\begin{corollary}
\label{full clgroup}
If $X$ is a compact geodesic space, then the following are equivalent.
\begin{enumerate}
\item[1)] $X$ is its own universal cover.
\item[2)] $\pi_{cl}(X) = \pi_1(X)$.
\item[3)] $Cr(X) = \emptyset$.
\end{enumerate}
\end{corollary}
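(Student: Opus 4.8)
The plan is to reduce all three conditions to a single intermediate statement, call it $(\star)$: for every $\varepsilon>0$ the endpoint map $\varphi_\varepsilon\colon X_\varepsilon\to X$ is a homeomorphism, equivalently $\pi_\varepsilon(X)$ is trivial. Once $(\star)$ is isolated, each of $1\Leftrightarrow(\star)$, $2\Leftrightarrow(\star)$, $3\Leftrightarrow(\star)$ is short and uses almost nothing beyond what is already established.

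For $2\Leftrightarrow(\star)$ I would argue purely algebraically. The groups $K_\varepsilon=\ker h_\varepsilon$ are nested, decreasing, and contained in $\pi_1(X)$, with $\bigcap_{\varepsilon>0}K_\varepsilon=\pi_{cl}(X)$; hence $\pi_{cl}(X)=\pi_1(X)$ holds if and only if $K_\varepsilon=\pi_1(X)$ for every $\varepsilon$. Since $h_\varepsilon\colon\pi_1(X)\to\pi_\varepsilon(X)$ is surjective when $X$ is geodesic, the condition $\ker h_\varepsilon=\pi_1(X)$ is exactly the triviality of $\pi_\varepsilon(X)$. This gives $2\Leftrightarrow(\star)$ with no appeal to Theorem~\ref{main theorem 1}. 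For $3\Leftrightarrow(\star)$: if $(\star)$ holds there is no nontrivial $\varepsilon$-loop at $*$ for any $\varepsilon$, so no value is critical and $Cr(X)=\emptyset$; conversely, for $\varepsilon>diam(X)$ every $\varepsilon$-chain from $*$ collapses by basic moves to a two-point chain, so $\pi_\varepsilon(X)$ is already trivial there, while for $\delta<\varepsilon$ with $[\delta,\varepsilon)\cap Cr(X)=\emptyset$ the bonding homomorphism $\Phi_{\varepsilon\delta}$ is injective (Lemma~21 of \cite{PWetal}, as used in the proof of Lemma~\ref{univ cover first eq}) and also surjective, hence an isomorphism --- so $Cr(X)=\emptyset$ propagates triviality of $\pi_\varepsilon(X)$ down to all $\varepsilon$.

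For $1\Leftrightarrow(\star)$ I would use the fact, recalled in Section~2, that every connected cover of a compact geodesic space is covered by some $X_\varepsilon$. If $(\star)$ holds, then for any connected cover $g\colon Z\to X$ one obtains, after composing the domination map $X_\varepsilon\to Z$ with the homeomorphism $\varphi_\varepsilon^{-1}$, a covering $s\colon X\to Z$ with $g\circ s=\mathrm{id}_X$; this forces $g$ to be a homeomorphism, so the only connected cover of $X$ is $\mathrm{id}_X$ up to equivalence, and therefore $\mathrm{id}_X$ is a categorical universal cover. Conversely, if $X$ is its own universal cover, applying the categorical property to the cover $\varphi_\varepsilon$ yields a covering $h\colon X\to X_\varepsilon$ with $\varphi_\varepsilon\circ h=\mathrm{id}_X$; since $h$ is injective it is an open embedding, and $h(X)$ is also closed in the connected metric space $X_\varepsilon$ because $X$ is compact, so $h(X)=X_\varepsilon$, $h$ is a homeomorphism, and $\varphi_\varepsilon=h^{-1}$ is too. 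Hence $\pi_\varepsilon(X)=1$ for all $\varepsilon$, i.e. $(\star)$.

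The only point needing genuine care --- and it occurs twice --- is the elementary topological observation that a covering map of a compact space that admits a continuous section must be a homeomorphism; here compactness of $X$ and connectedness of the $\varepsilon$-covers do the work, since the image of the section is a nonempty clopen subset of the connected total space. I would record this as a one-line preliminary. Everything else is bookkeeping with the identities $K_\varepsilon=\ker h_\varepsilon$, the surjectivity of $h_\varepsilon$, the representation $\pi_{cl}(X)=\bigcap_{\varepsilon>0}K_\varepsilon$, and the behaviour of the bonding maps away from critical values.
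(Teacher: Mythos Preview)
Your argument is correct, and it takes a genuinely different route from the paper's. The paper proves the cycle $1\Rightarrow 2\Rightarrow 3\Rightarrow 1$: for $1\Rightarrow 2$ it observes that any loop lifts closed to the universal cover (trivially, since it is $X$) and hence to every $X_\varepsilon$; for $2\Rightarrow 3$ it notes that a critical value would require some loop to lift open to some $X_\varepsilon$; and for $3\Rightarrow 1$ it invokes Theorem~\ref{main theorem 1} to conclude that a universal cover exists, then uses the stabilization of the $\varepsilon$-covers together with $X_\varepsilon=X$ for $\varepsilon>diam(X)$ to identify the universal cover with $X$.

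By contrast, you hub everything through the intermediate condition $(\star)$ that each $\pi_\varepsilon(X)$ is trivial, and you deliberately avoid Theorem~\ref{main theorem 1}. Your $2\Leftrightarrow(\star)$ is the same idea as the paper's $2\Rightarrow 3$ cast algebraically via the surjectivity of $h_\varepsilon$; your $3\Leftrightarrow(\star)$ replaces the appeal to Theorem~\ref{main theorem 1} by the elementary propagation argument using Lemma~21 of \cite{PWetal}; and your $1\Leftrightarrow(\star)$ is a direct covering-space argument (section plus clopen image in a connected total space) rather than a consequence of the ambient theory. What your approach buys is independence from the main theorem, so the corollary could stand earlier or on its own; what the paper's approach buys is brevity, since Theorem~\ref{main theorem 1} is already in hand at that point. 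Both are clean; the only place you should be explicit is that $X_\varepsilon$ (and the intermediate $Z$) is connected and Hausdorff so the clopen-image argument applies, which you do note.
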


\begin{proof}
Suppose $X$ is its own universal cover, and let $\gamma$ be any path loop at $*$.  Then, since $\gamma$ obviously lifts closed to the universal cover, it lifts closed to $X_{\varepsilon}$ for every $\varepsilon > 0$ by the universal covering property.  Hence, $[\gamma] \in \pi_{cl}(X)$, and this shows that $1 \Rightarrow 2$.  If $2$ holds, then every path loop at $*$ lifts closed to every $X_{\varepsilon}$.  A critical value would indicate that some path loop, however, eventually lifts open to some $X_{\varepsilon}$.  Thus, $2$ implies $3$.  Finally, suppose
$3$ holds.  Then, not only does $X$ have a universal cover by Theorem \ref{main theorem 1}, but the $\varepsilon$-covers never change as $\varepsilon$ decreases and are, thus, all equivalent.  Since $X_{\varepsilon} = X$ for $\varepsilon > diam(X)$, it follows that the universal cover, which is just $X_{\varepsilon}$ for small enough $\varepsilon$, is simply $X$.\end{proof}

\begin{proposition}
\label{charac simply conn covers}
Let $X$ be a compact geodesic space.  Then $X$ has a simply connected cover if and only if $Cr(X)$ is finite and $\pi_{cl}(X) = 1$.  Moreover, for these conditions to hold it is necessary that $\pi_1(X)$, $\bar{\pi}_1(X)$, and $\Delta(X)$ be mutually isomorphic.
\end{proposition}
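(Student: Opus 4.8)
The plan is to derive the proposition almost entirely from Theorem \ref{main theorem 1} and Lemma \ref{cover properties}, with classical covering space theory supplying the one conceptual bridge: a simply connected cover is precisely a categorical universal cover whose covering group is trivial. The computational engine is the standard fact that a covering map $f:Y\to X$ induces an injection $f_{\ast}:\pi_1(Y)\hookrightarrow\pi_1(X)$ (by lifting of paths and path homotopies) whose image is the covering group when $f$ is regular; hence, for a regular cover, simple connectivity of the cover is equivalent to triviality of its covering group. I would state this observation first and then use it in both directions.

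For the forward implication, suppose $p:Y\to X$ is a simply connected cover. Since $Y$ is geodesic, hence connected and locally path connected, and $p_{\ast}(\pi_1(Y))$ is trivial, the classical lifting criterion produces for every cover $g:Z\to X$ a continuous map $h:Y\to Z$ with $g\circ h=p$, and $h$ is itself a cover because $p$ and $g$ are covering maps and $Z$ is locally path connected. Thus $Y$ is a categorical universal cover in the sense of Section 1, so Theorem \ref{main theorem 1} gives that $Cr(X)$ is finite. Applying Lemma \ref{cover properties} to the universal cover $Y$, its covering group is $\pi_{cl}(X)$; but $p_{\ast}$ injects $\pi_1(Y)=1$ onto that covering group, whence $\pi_{cl}(X)=1$.

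For the reverse implication, assume $Cr(X)$ is finite and $\pi_{cl}(X)=1$. Theorem \ref{main theorem 1} yields a universal cover $\hat{X}$, and Lemma \ref{cover properties} identifies its covering group with $\pi_{cl}(X)=1$; since the covering projection injects $\pi_1(\hat{X})$ onto this covering group, $\pi_1(\hat{X})=1$, so $\hat{X}$ is the desired simply connected cover. For the final assertion, these hypotheses give $\bar{\pi}_1(X)=\pi_1(X)/\pi_{cl}(X)=\pi_1(X)$, while the concluding line of Theorem \ref{main theorem 1} (available since $Cr(X)$ is finite) gives $\bar{\pi}_1(X)\cong\Delta(X)$; composing these isomorphisms yields $\pi_1(X)\cong\bar{\pi}_1(X)\cong\Delta(X)$.

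The only step I expect to attract scrutiny is the assertion that a simply connected cover is automatically a categorical universal cover, since a compact geodesic space need not be semilocally simply connected a priori. This rests solely on the classical lifting criterion for maps out of a connected, locally path connected space — applied here to the covering map out of the simply connected $Y$, for which the $\pi_1$ hypothesis of the criterion is vacuous — together with the routine fact that a lift of one covering map through another is again a covering map; alternatively, one may observe that the mere existence of a simply connected cover already forces $X$ to be classically semilocally simply connected, after which the statement is entirely standard. Everything else is a bookkeeping assembly of results already established in the excerpt.
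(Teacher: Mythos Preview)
Your proof is correct, but the forward direction takes a genuinely different route from the paper's. The paper invokes an external result (Theorem 27 of \cite{PW1}) stating that if $X$ has a simply connected cover then $h_{\varepsilon}:\pi_1(X)\to\pi_{\varepsilon}(X)$ is an isomorphism for all sufficiently small $\varepsilon$; this immediately gives that the kernels $K_{\varepsilon}$ are eventually trivial, hence $Cr(X)$ is finite and $\pi_{cl}(X)=1$ in one stroke. You instead argue via the classical lifting criterion that a simply connected cover is categorically universal, then feed this into Theorem~\ref{main theorem 1} and Lemma~\ref{cover properties} to extract the two conclusions separately. Your route is more self-contained within the paper's internal framework, at the cost of the covering-theoretic verification you flag (which is fine, and your alternative observation that the existence of a simply connected cover forces semilocal simple connectivity is the cleanest way to handle it). The reverse direction and the final isomorphism statement are essentially the same in both arguments, though the paper spells out the $\bar{\pi}_1(X)\cong\Delta(X)$ step via path connectedness of the UU-cover and surjectivity of $\Lambda$, whereas you simply cite the concluding line of Theorem~\ref{main theorem 1}.
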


\begin{proof}
In our present language, Plaut and the author showed (Theorem 27, \cite{PW1}) that if $X$ has a simply connected cover then $h_{\varepsilon}:\pi_1(X) \rightarrow \pi_{\varepsilon}(X)$ is an isomorphism for some - equivalently, all - sufficiently small $\varepsilon$.  So, if $X$ has a simply connected cover, then $Cr(X)$ is finite by Theorem \ref{main theorem 1}, and $\{K_{\varepsilon}\}_{\varepsilon > 0}$ is not only eventually constant but eventually trivial.  If $Cr(X)$ is finite and $\pi_{cl}(X) = 1$, then the covering groups $K_{\varepsilon}$ are eventually trivial, meaning that $X_{\varepsilon}$ is simply connected for all sufficiently small $\varepsilon$.

If the conditions in the first statement hold, then we have already noted that the UU-cover is path connected.  Since $ker \ \Lambda = \pi_{cl}(X)$, it follows that $\Lambda$ is an isomorphism, implying that $\bar{h}_{\Delta}$ is, as well.  The fact that $\bar{h}$ is injective follows because $\pi_{cl}(X) = 1$.\end{proof}

\vspace{.1 in}
Since we have shown that $\pi_{cl}(X) = 1$ for every compact, one-dimensional geodesic space, the previous result implies the following.

\begin{corollary}
\label{1d covers}
Let $X$ be a compact, one-dimensional geodesic space.  Then $X$ has a universal cover if and only if it has a simply connected cover.
\end{corollary}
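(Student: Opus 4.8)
The plan is to deduce the corollary directly from three results already established, with no new geometric work; all of the substance is carried by Lemma \ref{k0 trivial 1d}. First I would dispatch the reverse implication: if $X$ has a simply connected cover, then Proposition \ref{charac simply conn covers} tells us that $Cr(X)$ is finite (it also gives $\pi_{cl}(X)=1$, but finiteness is all I need here), and hence $X$ has a universal cover by Theorem \ref{main theorem 1}. One could instead argue that a simply connected cover is always a categorical universal cover via the usual lifting criterion, but routing through Proposition \ref{charac simply conn covers} keeps the argument entirely inside the framework already set up in the paper.

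For the forward implication, suppose $X$ has a universal cover. Then Theorem \ref{main theorem 1} gives that $Cr(X)$ is finite. Since $X$ is compact and one-dimensional, Lemma \ref{k0 trivial 1d} yields $\pi_{cl}(X)=1$. Thus both hypotheses of Proposition \ref{charac simply conn covers} are met, so $X$ has a simply connected cover; moreover that proposition records that in this case $\pi_1(X)\cong\bar{\pi}_1(X)\cong\Delta(X)$, so the simply connected cover is in fact the universal cover, and the two notions collapse.

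I do not anticipate any genuine obstacle. The only nontrivial input is the vanishing of $\pi_{cl}(X)$ for compact one-dimensional geodesic spaces, and this has already been proved in Lemma \ref{k0 trivial 1d} by combining Cannon-Conner's theorem that one-dimensional spaces are homotopically Hausdorff with Berestovskii-Plaut's result that $\Lambda$ is injective whenever the uniform dimension is at most $1$. The one point worth making explicit in the writeup is that the ``one-dimensional'' hypothesis refers to topological dimension, which for a compact metric space supplies exactly the uniform dimension bound needed to invoke Lemma \ref{k0 trivial 1d}.
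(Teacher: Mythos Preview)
Your proposal is correct and matches the paper's approach exactly: the paper simply observes that Lemma~\ref{k0 trivial 1d} gives $\pi_{cl}(X)=1$ in the one-dimensional case, whereupon Proposition~\ref{charac simply conn covers} (together with Theorem~\ref{main theorem 1}) yields the equivalence. Your write-up is more explicit about the two directions, but the logical content is identical.
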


The following examples illustrate some local topological conditions that can prevent or allow the existence of a universal cover.  In particular, we point out that the isomorphism condition $\pi_1(X) \cong \bar{\pi}_1(X) \cong \Delta(X)$ that is necessary in Proposition \ref{charac simply conn covers} is not sufficient.  Example \ref{infinite torus} will show that these groups can be isomorphic even when the space has no simply connected or even universal cover.

\begin{example}
\label{HE example}
Let $H$ denote the geodesic Hawaiian earring.  This is an example of a space for which $\pi_1(H) \cong \bar{\pi}_1(H) \ncong \Delta(H)$, so that the necessary conditions at the end of Theorem \emph{\ref{main theorem 1}} are not satisfied.  Note that $H$ has no degenerate points, but the singular point where the circles are joined is sequentially singular.  Berestovskii-Plaut showed in \emph{\cite{BPUU}} that
\[\Delta(H) \cong \lim_{\leftarrow} F_n,\]
\noindent where $F_n$ is the rank $n$ free group and the groups form an inverse system via bonding homomorphisms $p_{nm}:F_m \rightarrow F_n$, $m > n$, that collapse the last $m-n$ generators to the identity and map the others to themselves.  Morgan and Morrison showed in \cite{MM} that $\pi_1(H)$ isomorphically embeds into this inverse limit of free groups, but its image is not the whole group \emph{(}de Smit, \cite{deS}\emph{)}.  In fact, an element of the inverse limit that is not be represented by a path loop in $H$ can be formed by a sequence of the commutators $[C_1,C_n]$, $n =1,2,\dots$  Thus, the UU-cover of the Hawaiian earring is not path connected and $\pi_1(H) \ncong \Delta(H)$.  On the other hand, since $H$ is one-dimensional, we have $\pi_{cl}(H) = 1$ and $\bar{\pi}_1(H) = \pi_1(H)$. $\blacksquare$
\end{example}

\begin{example}
\label{K example}
This space is a variant of one that is commonly used as a non-locally path connected example in more general topological settings (cf. \cite{FRVZ}).  The modification we use was introduced in \cite{SW1} by Sormani and Wei, and details may be found there. Define the following subsets of $\mathbb R^3$.
\[Z_1 = \bigl\{\bigl(x,y,\sin(1/y)\bigr) \ : \ 0 < y \leq 1/\pi, \ y \geq |x|\bigr\}\]  
\[Z_2 = \bigl\{(x,y,z) \ : \ y = |x|, \ -1/\pi \leq x \leq 1/\pi, \ -1 \leq z \leq 1\bigr\}\]
\noindent We then let $Z = Z_1 \cup Z_2$.  $Z$ is a topologist's sine curve ``wedge'' - or sine surface - with planar sides that come together at a vertical line segment at the apex of the wedge, namely $A \defeq \{(0,0,z) \ : \ -1 \leq z \leq 1\}$. With the subspace metric from $\mathbb R^3$, $Z$ is compact, connected, and locally path connected.  Obviously, any two points in $Z$ can be joined by a curve in $Z$ that is rectifiable with respect to the subspace metric, so we endow $Z$ with the corresponding induced geodesic metric.  With this geodesic metric, $Z$ is bi-Lipschitz equivalent to $Z$ with the subspace metric.  We call $A$ the \textit{apex of $Z$}.  Note that $Z \setminus A$ is semilocally simply connected, though not uniformly so.  We take as our base point $*$ the origin, which is the midpoint of $A$.

It is shown in \cite{SW1} that $Z$ is not homotopically Hausdorff - so $\pi_{cl}(Z)$ is not trivial -  and that $Z$ can be expressed as the Gromov-Hausdorff limit of a sequence of simply connected spaces, $Z_n$.  The latter property means that $Cr(Z) = \emptyset$, since a critical value of $Z$ would have to be a limit of critical values of the sequence spaces $Z_n$ (Theorem 8.4 in \cite{SW2}).  So, $Z$ is its own universal cover, and $\pi_1(Z) \ncong \bar{\pi}_1(Z) \cong \Delta(Z)$.  Note that $Z$ has no sequentially singular points, but it contains a continuum of degenerate points.

To obtain a similar example but with nontrivial covers, we can simply glue a circle of circumference $1$ - or any compact, semi-locally simply connected, non-simply connected space - to $Z$ at the base point $*$.  Let $Y=S^1 \vee Z$ be this wedge space.  Then the universal cover \emph{(}and the UU-cover\emph{)} of $Y$ is the real line with copies of $Z$ attached at the integers.  This space is path connected, so $\Lambda$ is surjective and $\bar{h}_{\Delta}:\bar{\pi}_1(Y) \rightarrow \Delta(Y)$ is an isomorphism.  The closed lifting group of $Y$ is still clearly nontrivial, but now it is not the whole group $\pi_1(Y)$.  Any nontrivial path loop at $*$ that stays in and traverses the circle in either direction will be unraveled at $\varepsilon = \frac{1}{3}$.  Thus, $\pi_1(Y) \ncong \bar{\pi}_1(Y)$.  Note that $Y$, like $Z$, has degenerate points but no sequentially singular points. $\blacksquare$\end{example}

The next example is of a space that has no simply connected or even universal cover but is such that all three groups $\pi_1(X)$, $\bar{\pi}_1(X)$, and $\Delta(X)$ are isomorphic and \textit{every} point is a sequentially singular point.  The space in this example is compact but infinite dimensional.

\begin{example}
\label{infinite torus}
Let $T^{\infty}$ be the infinite torus, metrized as a compact geodesic space.  Specifically, we let $T^{\infty}$ be the standard metric product \emph{(}i.e. $d = \sqrt{\sum_{n=1}^{\infty} d_n^2}$\emph{)} of the geodesic circles $S_n^1$, where $S_n^1$ has circumference $\frac{3}{2^n}$.  We fix base points $*_n \in S_n^1$ and choose $(*_1,*_2,\dots)$ as our base point in $T^{\infty}$.  Then $Cr(T^{\infty}) = \bigl\{\frac{1}{2^n}\bigr\}_{n=1}^{\infty}$, so it has no universal cover.  In fact, by the homogeneity of the space, every point in $T^{\infty}$ is a sequentially singular point.

It is straightforward to see that $\pi_{cl}(X) = 1$.  Indeed, every circle $S_n^1$ in the product eventually lifts open to $T_{\varepsilon}^{\infty}$ for small enough $\varepsilon$, and any nontrivial path loop in $T^{\infty}$ will have some nonzero power of such a circle as at least one of its factors.  Thus, $\Pi_{n=1}^{\infty} \mathbb Z \cong \pi_1(T^{\infty}) \cong \bar{\pi}_1(T^{\infty})$.  Note that $\Pi_{n=1}^{\infty} \mathbb Z$ is the direct product, not the direct sum.

It remains to compute $\Delta(T^{\infty})$, but this is straightforward.  Let $\varepsilon_n = \frac{1}{2^n}$, $n = 1, 2, \dots$  For any $\varepsilon_{n+1} < \varepsilon \leq \varepsilon_n$, the cover $T_{\varepsilon}^{\infty}$ simply unravels the first $n$ circles in the product, leaving the others unchanged.  Intuitively, the other circles are too small to be noticed by this $\varepsilon$-cover, and it views the space - in a coarse or large scale sense - as simply an $n$-dimensional torus.  Consequently, the $\varepsilon$-groups are easy to determine, also.  For $\varepsilon_{n+1} < \varepsilon \leq \varepsilon_n$, $\pi_{\varepsilon}(T^{\infty})$ is the rank $n$ free abelian group, $\mathbb Z^n$.  As $\varepsilon$ decreases and reaches a new critical value, the $\varepsilon$-groups simply pick up a new generator.  In other words, the bonding homomorphism $\Phi_{\varepsilon_n \varepsilon_{m}}:T_{\varepsilon_{m}}^{\infty} \rightarrow T_{\varepsilon_n}^{\infty}$, for $m > n$, are just the projection homomorphisms $\mathbb Z^m \rightarrow \mathbb Z^n$ that take the first $n$ generators to themselves and collapse the last $m-n$ generators to the identity.

Thus, $\Delta(T^{\infty})$ is isomorphic to the inverse limit of finite rank free abelian groups, $\lim_{\leftarrow} \mathbb Z^m$, with the bonding maps as just described.  This inverse limit, however, satisfies
\[\lim_{\leftarrow} \mathbb Z^m \cong \prod_{m=1}^{\infty} \mathbb Z.\]
\noindent So, we see that all three groups $\pi_1(T^{\infty})$, $\bar{\pi}_1(T^{\infty})$, and $\Delta(T^{\infty})$ are isomorphic. $\blacksquare$  
\end{example}

\begin{example}
\label{gasket example}
In this example, we show that $\pi_{cl}(X)$ need not equal $\pi_1^{sg}(X)$.  While the construction is technical, the basic idea is simple.  We take the standard Sierpinski gasket formed by an equilateral triangle, and we fill in the holes of the gasket with spaces similar to the space $Z$ from Example \ref{K example}.  Then the path loop that traverses the outer triangle of the gasket represents an element of $\pi_{cl}(X)$, but it cannot be expressed as a finite product of small loop lollipops.  We construct $X$ as a Hausdorff limit of compact subspaces of $\mathbb R^3$. The limit is a Peano continuum, yielding an equivalent geodesic metric by the Bing-Moise Theorem. Details on Gromov-Hausdorff limits may be found in \cite{BBI}.

To begin, we modify the space $Z$ from from Example \ref{K example} by reducing the diameter in the $y$-direction and expanding the angle between the rectangular sides from $90$ degrees to $120$.  Define $Z'$ to be the subset of $\mathbb R^3$ formed by the union of the following sets.
\[Z'_1 = \bigl\{\bigl(x,y,\sin(1/y)\bigr) : 0 < y \leq 1/3\pi, \ y \geq |x|/\sqrt{3}\bigr\}\]
\[Z'_2=\bigl\{(x,y,z) : y = |x|/\sqrt{3}, \ -1/\pi\sqrt{3} \leq x \leq 1/\pi\sqrt{3}, \ -1 \leq z \leq 1\bigr\}\]
\noindent Now, glue together three isometric copies of $Z'$ along their planar sides, so that the apexes of the copies are identified with each other.  This forms a subspace of $\mathbb R^3$ with an outer boundary curve given by an equilateral triangle of side length $\frac{2}{\pi\sqrt{3}} < \frac{1}{2}$ and with a vertical segment of height $2$ at the center that mimics the behavior of the apex in $Z$.

Next, attach a flat, equilateral triangular annulus to the outer boundary curve of this space, expanding the outer boundary curve to an equilateral triangle of side length $\frac{1}{2}$.  Call the resulting space $Z''$.  Let $S_2$ be the second stage graph approximation of the standard Sierpinski gasket formed from an equilateral triangle of side length $1$. That is, we let $T$ be the equilateral triangle of side length $1$, take a homothetic copy $T' = \frac{1}{2}T$, and then attach $T'$ to $T$ so that the vertices of $T'$ are at the midpoints of the sides of $T$.  Finally, we insert $Z''$ into the copy of $T'$ within $S_2$, identifying the outer boundary of $Z''$ with $T'$.  We will denote the resulting space by $X_1$, since it will be the first stage in a Gromov-Hausdorff sequence.

We endow $X_1$ with the subspace metric from $\mathbb R^3$.  Since the central vertical segment resulting from the sine surface portions has height $2$ - but height $1$ above and below the plane in which the outer boundary triangle lies - it is easy to verify that every point on the attached copy of $Z''$ is within a distance $\frac{\sqrt{13}}{2\sqrt{3}}$ of a point on the outermost triangular edge of $X_1$.  The rest of the construction now follows as in the standard gasket construction.  Let $p_1$, $p_2$, $p_3$ be the outer three vertices of $X_1$, starting with the lower left and moving  around counterclockwise.  Define contraction mappings $f_i(u) = \frac{1}{2}(u - p_i) + p_i$.  We set 
\[X_2  = X_1 \cup \Biggl(\bigcup_{i=1,2,3} f_i(X_1)\Biggr).\] 
\noindent That is, $X_2$ is constructed by taking three homothetic copies of $X_1$, rescaled by $\frac{1}{2}$, and attaching them to the holes of $X_1$, so that the boundaries of the contracted copies are identified with the boundaries of the holes.  This also rescales the height of the original, central vertical segment in $X_1$ by $\frac{1}{2}$, giving it a height of $1$.  Giving $X_2$ the Euclidean subspace metric, we see that $X_1$ isometrically embeds into $X_2$, since $X_1$ is not metrically altered by adding in the new pieces.  On the other hand, by simply rescaling by half the distance computations from $X_1$, we see that every point on $X_2$ is within a distance $\frac{\sqrt{13}}{2^2 \sqrt{3}}$ of a point of $X_1$.  Thus, $d_{GH}(X_1,X_2) < \frac{\sqrt{13}}{2^2 \sqrt{3}}$.

Continuing, we let $X_3 = X_2 \cup \bigl(\bigcup_{i,j=1,2,3} (f_i \circ f_j)(X_1)\bigr)$, which attaches homothetic copies of $X_1$, rescaled by $\frac{1}{4}$, to the holes in $X_2$.  The central vertical segments of the attached copies now have height $\frac{1}{2}$, so that every point of $X_3$ is within a distance $\frac{\sqrt{13}}{2^3 \sqrt{3}}$ of a point of $X_2$.  Since $X_2$ isometrically embeds into $X_3$, we have $d_{GH}(X_2,X_3) < \frac{\sqrt{13}}{2^3 \sqrt{3}}$.

Obviously, we can continue this process inductively, obtaining a sequence of compact metric spaces $\{X_n\}$ such that $d_{GH}(X_{n-1},X_n) < \frac{\sqrt{13}}{2^n \sqrt{3}}$ for $n \geq 2$.  This means that the resulting sequence is Cauchy, and since all of the spaces are metric subspaces of $\mathbb R^3$, $\{X_n\}$ is actually just a Cauchy sequence in the space of compact subsets of $\mathbb R^3$ with the Hausdorff metric.  That space is complete, which means $X_n$ converges to a compact metric space $X$.  It is evident that this limit space is simply the Sierpinski gasket with the holes filled in with appropriately rescaled copies of $Z''$.  Consequently, it admits a compatible geodesic metric by the Bing-Moise Theorem.  Moreover, like the original gasket, $X$ has a self-similarity property, namely that $X = \bigcup_{1\leq i \leq 3} f_i(X)$.  Note, also, that $Cr(X) = \emptyset$, since - by filling in the holes - the maximum critical value of $X_n$ converges to $0$ as $n \rightarrow \infty$.  Thus, $X$ is its own universal cover.

Choose the lower left vertex $p_1$ as the base point, and let $\gamma$ be the simple path loop that traverses the outer triangular boundary of $X$ one time in the counterclockwise direction.  Note that $\gamma$ is not a small path loop.  We claim that $[\gamma] \in \pi_{cl}(X)$ but is not in $\pi_1^{sg}(X)$.  This is intuitively evident, since $\gamma$ ``surrounds'' infinitely many separated degenerate points; the only formal obstacle is the bookkeeping.  Thus, for this example only, we will adopt the following simplified notation.  When a path loop $\sigma$ can be expressed as a product of lollipops,  $[\sigma] = [\alpha_1 \beta_1 \alpha_1^{-1} \cdots \alpha_k \beta_k \alpha_k^{-1}]$, we will ignore the adjoing paths, since they will not be essential in this particular example, and simply write $\sigma \sim \beta_1 \cdots \beta_k$.

We will denote a word of length $n$ in the set of symbols $\Sigma(3)\defeq \{1,2,3\}$ by $\omega = \{\omega_1,\dots,\omega_n\}$, where $\omega_i \in \Sigma(3)$ for each $i$.  For each such word $\omega = \{\omega_1,\dots,\omega_n\}$, define $\beta_{\omega}^{(n)} \defeq f_{\omega} \circ \gamma$, where $f_{\omega}$ is shorthand for $f_{\omega_1} \circ \cdots \circ f_{\omega_n}$.  For words of small length, we will often suppress the brace notation for $\omega=\{\omega_1,\dots,\omega_n\}$ when it is written as a subscript; thus, $f_{1,2}$ will denote $f_1 \circ f_2$, and so on.  Note that $\beta_{1}^{(1)} \defeq f_1 \circ \gamma$, $\beta_2^{(1)} \defeq f_2 \circ \gamma$, and $\beta_3^{(1)} \defeq f_3 \circ \gamma$ are just the three outer triangular path loops of $S_2$ considered as curves in $X$, starting with the lower left triangle and moving counterclockwise around $X$.  Now, let $\lambda^{(0)}$ denote the counterclockwise, triangular path loop determined by $T'$ in $S_2$, again considered as a path loop in $X$.  Then define $\lambda_{\omega}^{(n)} = f_{\omega} \circ \lambda^{(0)}$ for any $n \geq 1$ and word $\omega$ of length $n$.

It is known and easy to verify that, for the gasket, $\gamma$ is homotopic to a product of four lollipops, $\gamma \sim \beta_1^{(1)} \beta_2^{(1)} \lambda^{(0)} \beta_3^{(1)}$.  The same result holds for $X$, since the homotopy between $\gamma$ and this product of lollipops lies in $S_2$ and, thus, is not affected by inserting $Z''$ into the central hole of $S_2$.  By construction, the lollipop with head $\lambda^{(0)}$ is a small loop lollipop.  The other three are not, since $\beta_1^{(1)}$, $\beta_2^{(1)}$, and $\beta_3^{(1)}$ are images of $\gamma$ under $f_1$, $f_2$, and $f_3$ and, therefore, enclose homothetic copies of $X$.  For these other three, however, we can use the self-similarity and inductively apply the same homotopy decomposition we applied to $\gamma$, breaking up each $\beta_i^{(1)}$, $i=1,2,3$, into a product of four more smaller lollipops that mimic the original decomposition.  That is, one will be a small loop lollipop, and the other three that are not small can be further decomposed, yielding an inductive process.  

Consider $\beta_1^{(1)}$, for instance.  The homotopy used to decompose $\gamma$ can be transferred via $f_1$ to a homotopy in $f_1(X)$ that decomposes $\beta_1^{(1)}$ into a product of four lollipops
\[\beta_1^{(1)} \sim \beta_{1,1}^{(2)} \beta_{1,2}^{(2)} \lambda_1^{(1)} \beta_{1,3}^{(2)} =(f_1 \circ f_1 \circ \gamma) (f_1 \circ f_2 \circ \gamma) (f_1 \circ \lambda^{(0)}) (f_1 \circ f_3 \circ \gamma) .\]
\noindent Likewise, we have
\[\beta_2^{(1)} \sim \beta_{2,1}^{(2)} \beta_{2,2}^{(2)} \lambda_2^{(1)} \beta_{2,3}^{(2)} =(f_2 \circ f_1 \circ \gamma) (f_2 \circ f_2 \circ \gamma) (f_2 \circ \lambda^{(0)}) (f_2 \circ f_3 \circ \gamma)\]
\[\beta_3^{(1)} \sim \beta_{3,1}^{(2)} \beta_{3,2}^{(2)} \lambda_3^{(1)} \beta_{3,3}^{(2)} =(f_3 \circ f_1 \circ \gamma) (f_3 \circ f_2 \circ \gamma) (f_3 \circ \lambda^{(0)}) (f_3 \circ f_3 \circ \gamma) .\]
\noindent Note that the small loop lollipops, $\lambda_j^{(1)}$, are indexed differently, since they originate from $T'$ and not from $\gamma$.  We see, then, that
\[\gamma \sim \Bigl(\beta_{1,1}^{(2)} \beta_{1,2}^{(2)} \lambda_1^{(1)} \beta_{1,3}^{(2)}\Bigr)\Bigl(\beta_{2,1}^{(2)} \beta_{2,2}^{(2)} \lambda_2^{(1)} \beta_{2,3}^{(2)}\Bigr)\lambda^{(0)} \Bigl(\beta_{3,1}^{(2)} \beta_{3,2}^{(2)} \lambda_3^{(1)} \beta_{3,3}^{(2)}\Bigr).\]
\noindent Then, we continue the decomposition process inductively at the next level.  We leave the small loop lollipops as they are.  Each $\beta_{i,j}^{(2)}$, however, is decomposed into a product $\beta_{i,j,1}^{(3)} \beta_{i,j,2}^{(3)} \lambda_{i,j}^{(2)} \beta_{i,j,3}^{(3)}$, where $\lambda_{i,j}^{(2)}$ is small and the other three path loops can be further decomposed.

Since the triangles making up the heads of the lollipops are either small path loops or become arbitrarily small in diameter as $n \rightarrow \infty$, the end result of the above decomposition process is that $\gamma$ can be expressed as a product of $\varepsilon$-lollipops for any $\varepsilon > 0$.  Thus, $[\gamma] \in \pi_{cl}(X)$.  However, $\gamma$ cannot be expressed as a finite product of small loop lollipops.  In fact, as the decomposition process above shows, any finite product of lollipops representing $[\gamma]$ will contain at least one that is not small but can be further decomposed using the self-similarity of $X$.  Hence, $[\gamma]$ is not in $\pi_1^{sg}(X)$.$\blacksquare$
\end{example}

\vspace{.1 in}
\section{Universal Covers and Group Topology}

If $I=[0,1] \subset \mathbb R$, the pointed, continuous mapping space 
\[Hom\bigl((I,\{0,1\}),(X,*)\bigr)=\{f:I \rightarrow X \ | \ f \ cts., \ f(0)=f(1)=*\}\]
\noindent is given the uniform topology, and the \textit{topological fundamental group} is $\pi_1(X,*)$ topologized as the quotient of that space under the homotopy equivalence relation.  It is denoted by $\pi_1^{top}(X,*)$, and we refer to its topology as the \textit{uniform quotient topology}.  We will let $\bar{\pi}_1^{top}(X,*)$ denote the revised fundamental group with the quotient topology inherited from $\pi_1^{top}(X,*)$, and we call this the \textit{topological revised fundamental group}.  As usual, we often suppress the base point $*$.

We first show that path loops that are sufficiently uniformly close, in terms of a given $\varepsilon > 0$, represent the same element of $\pi_1(X)/K_{\varepsilon}$.

\begin{lemma}
\label{close path lifts}
Let $X$ be a geodesic space with base point $*$, and suppose $\gamma$ and $\lambda$ are path loops at $*$.  If $\sup_{0\leq t \leq 1} d(\gamma(t),\lambda(t)) < \frac{\varepsilon}{3}$, then the lifts of $\gamma$ and $\lambda$ to $X_{\varepsilon}$ are either both closed or both open, and $[\gamma]K_{\varepsilon} = [\lambda]K_{\varepsilon}$.
\end{lemma}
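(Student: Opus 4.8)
The plan is to build, from one sufficiently fine partition of $[0,1]$, strong $\varepsilon$-chains along $\gamma$ and along $\lambda$ that have the same number of points and the same endpoints, to compare them with Lemma \ref{close chain lemma}, and then to translate the conclusion back through the $\varepsilon$-homomorphism $h_{\varepsilon}$.

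First I would put $\eta \defeq \sup_{0\le t\le 1} d(\gamma(t),\lambda(t))$, so $\eta < \varepsilon/3$ by hypothesis. Using uniform continuity of $\gamma$ and $\lambda$ on the compact interval $[0,1]$, I would choose $\delta>0$ so that $|s-t|<\delta$ forces both $d(\gamma(s),\gamma(t))<\varepsilon/3$ and $d(\lambda(s),\lambda(t))<\varepsilon/3$. Fix a partition $0=t_0<t_1<\cdots<t_n=1$ of mesh less than $\delta$, and set $\alpha=\{\gamma(t_0),\dots,\gamma(t_n)\}$ and $\beta=\{\lambda(t_0),\dots,\lambda(t_n)\}$. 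The mesh condition makes $\alpha$ a strong $\varepsilon$-chain along $\gamma$ --- for $s\in[t_{i-1},t_i]$ one has $d(\gamma(s),\gamma(t_{i-1}))<\varepsilon/3<\varepsilon$ and $d(\gamma(s),\gamma(t_i))<\varepsilon/3<\varepsilon$ --- and likewise $\beta$ a strong $\varepsilon$-chain along $\lambda$. Both chains have $n+1$ points, and since $\gamma$ and $\lambda$ are loops at $*$ both run from $*$ to $*$; thus $\alpha$ and $\beta$ have the same endpoints. By Definition \ref{ehomomorph}, together with the fact that any two strong $\varepsilon$-chains along a given path are $\varepsilon$-homotopic, $[\alpha]_{\varepsilon}=h_{\varepsilon}([\gamma])$ and $[\beta]_{\varepsilon}=h_{\varepsilon}([\lambda])$ in $\pi_{\varepsilon}(X)$.

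Next I would run the estimate that the constant $\varepsilon/3$ is designed for. Since $d(\gamma(t_{i-1}),\gamma(t_i))<\varepsilon/3$ for each $i$, Definition \ref{chain gap def} gives $E(\alpha)>2\varepsilon/3$, while $\mathcal D(\alpha,\beta)=\max_i d(\gamma(t_i),\lambda(t_i))\le\eta<\varepsilon/3<\tfrac12 E(\alpha)$. Lemma \ref{close chain lemma} then yields that $\beta$ is $\varepsilon$-homotopic to $\alpha$, so $h_{\varepsilon}([\gamma])=[\alpha]_{\varepsilon}=[\beta]_{\varepsilon}=h_{\varepsilon}([\lambda])$. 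Because $K_{\varepsilon}=ker \ h_{\varepsilon}$, this says precisely $[\gamma]K_{\varepsilon}=[\lambda]K_{\varepsilon}$. Finally, by Lemma \ref{charac lifts} a path loop lifts closed to $X_{\varepsilon}$ if and only if its $h_{\varepsilon}$-image is the identity $\tilde{*}$; since $h_{\varepsilon}([\gamma])=h_{\varepsilon}([\lambda])$, either both images equal $\tilde{*}$, in which case both lifts are closed, or neither does, in which case both lifts are open.

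There is no deep obstacle here; the only point needing care is that one partition must simultaneously be a strong-chain partition for both $\gamma$ and $\lambda$ while keeping the consecutive $\gamma$-distances below $\varepsilon/3$, which is exactly what forces $\mathcal D(\alpha,\beta)<\tfrac12 E(\alpha)$ and hence lets Lemma \ref{close chain lemma} apply. The threshold $\varepsilon/3$ in the statement is the precise value that makes this inequality go through.
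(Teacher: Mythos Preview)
Your proof is correct and follows essentially the same approach as the paper: both arguments build chains $\alpha$ and $\beta$ from a common partition, verify $E(\alpha)>2\varepsilon/3$ and $\mathcal D(\alpha,\beta)<\varepsilon/3$, and invoke Lemma~\ref{close chain lemma}. The only minor difference is that the paper starts from a strong $\varepsilon/3$-chain along $\gamma$ and then checks via the triangle inequality that the induced chain $\beta$ is a strong $\varepsilon$-chain along $\lambda$, whereas you use uniform continuity of both paths at once to guarantee this from the outset; this is a cosmetic variation, not a different strategy.
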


\begin{proof}
Choose a strong $\frac{\varepsilon}{3}$-loop along $\gamma$, say $\alpha = \{*=\gamma(0),\gamma(t_1),\dots,\gamma(t_n)=*\}$.  Define a chain loop $\beta =\{*=\lambda(0),\lambda(t_1),\dots,\lambda(t_n)=*\}$ along $\lambda$.  Then $\mathcal D(\alpha,\beta) < \frac{\varepsilon}{3}$, and $E(\alpha) > \frac{2\varepsilon}{3}$.  It follows that $\mathcal D(\alpha,\beta) < \frac{1}{2}E(\alpha)$, and, by Lemma \ref{close chain lemma}, $\beta$ is an $\varepsilon$-loop that is $\varepsilon$-homotopic to $\alpha$.  Furthermore, $\beta$ is a strong $\varepsilon$-loop along $\lambda$.
In fact, if $i\in \{1,\dots,n\}$ and $t_{i-1} \leq t \leq t_i$, then
\[d(\lambda(t_{i-1}),\lambda(t))\leq d(\lambda(t),\gamma(t)) + d(\gamma(t),\gamma(t_{i-1})) + d(\gamma(t_{i-1}),\lambda(t_{i-1})) < \varepsilon\]
\[d(\lambda(t_{i}),\lambda(t))\leq d(\lambda(t),\gamma(t)) + d(\gamma(t),\gamma(t_{i})) + d(\gamma(t_{i}),\lambda(t_{i})) < \varepsilon.\]
\noindent So, if $\gamma$ lifts closed to $X_{\varepsilon}$, then $\alpha$ is $\varepsilon$-null.  This implies that $\beta$ is also $\varepsilon$-null, and $\lambda$ lifts closed to $X_{\varepsilon}$.  If $\gamma$ lifts open to $X_{\varepsilon}$, then $\alpha$ is $\varepsilon$-nontrivial.  Hence, $\beta$ is also $\varepsilon$-nontrivial, and $\lambda$ lifts open. For the last part, consider the strong $\varepsilon$-loop $\alpha \beta^{-1}$ along $\gamma \lambda^{-1}$.  Since $\alpha \beta^{-1}$ is $\varepsilon$-null, $[\gamma] [\lambda]^{-1}=[\gamma \lambda^{-1}] \in K_{\varepsilon}$.\end{proof}

\begin{corollary}
\label{kernels clopen}
If $X$ is a geodesic space, then $K_{\varepsilon} \subset \pi_1(X)$ is both open and closed in $\pi_1^{top}(X)$.
\end{corollary}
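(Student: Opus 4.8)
The plan is to invoke Lemma \ref{close path lifts} directly, using the definition of $\pi_1^{top}(X)$ as a topological quotient. Let $q: Hom\bigl((I,\{0,1\}),(X,*)\bigr) \to \pi_1^{top}(X)$ be the quotient map $\gamma \mapsto [\gamma]$, and write $d_{\infty}(\gamma,\lambda) = \sup_{0\le t\le 1} d(\gamma(t),\lambda(t))$ for the uniform metric on the loop space; this is finite-valued because $X$ is geodesic, and it induces the uniform topology. By definition of the quotient topology, a set $V \subseteq \pi_1(X)$ is open in $\pi_1^{top}(X)$ if and only if $q^{-1}(V)$ is open in the loop space. So it suffices to show that both $q^{-1}(K_{\varepsilon})$ and its complement are open for the metric $d_{\infty}$.

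First I would show $K_{\varepsilon}$ is open. Fix a path loop $\gamma$ at $*$ with $[\gamma] \in K_{\varepsilon}$, and let $\lambda$ be any path loop at $*$ with $d_{\infty}(\gamma,\lambda) < \varepsilon/3$. Lemma \ref{close path lifts} gives $[\lambda]K_{\varepsilon} = [\gamma]K_{\varepsilon} = K_{\varepsilon}$, so $[\lambda] \in K_{\varepsilon}$, i.e. $\lambda \in q^{-1}(K_{\varepsilon})$. Thus the open $d_{\infty}$-ball of radius $\varepsilon/3$ about $\gamma$ lies inside $q^{-1}(K_{\varepsilon})$; since $\gamma$ was arbitrary, $q^{-1}(K_{\varepsilon})$ is open and $K_{\varepsilon}$ is open in $\pi_1^{top}(X)$.

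Next I would run the same argument on the complement to get closedness. Fix a path loop $\gamma$ at $*$ with $[\gamma] \notin K_{\varepsilon}$, and let $\lambda$ be a path loop at $*$ with $d_{\infty}(\gamma,\lambda) < \varepsilon/3$. Again by Lemma \ref{close path lifts}, $[\lambda]K_{\varepsilon} = [\gamma]K_{\varepsilon} \neq K_{\varepsilon}$, so $[\lambda] \notin K_{\varepsilon}$; hence the $\varepsilon/3$-ball about $\gamma$ misses $q^{-1}(K_{\varepsilon})$, so $q^{-1}\bigl(\pi_1(X)\setminus K_{\varepsilon}\bigr)$ is open and $K_{\varepsilon}$ is closed. (Alternatively, once $K_{\varepsilon}$ is known open one may deduce closedness from the fact, noted in the introduction, that $\pi_1^{top}(X)$ is a quasitopological group: left translations are homeomorphisms, so each coset $gK_{\varepsilon}$ is open and the complement of $K_{\varepsilon}$ is a union of such cosets.)

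There is essentially no obstacle here beyond bookkeeping: the only points requiring care are confirming that $d_{\infty}$ is a genuine metric inducing the uniform topology on the loop space — which uses only that $X$ is geodesic, so any two points lie at finite distance — and correctly applying the quotient-topology criterion, after which the statement is an immediate corollary of Lemma \ref{close path lifts}.
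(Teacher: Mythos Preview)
Your proof is correct and follows essentially the same approach as the paper: both use Lemma \ref{close path lifts} to show that the uniform $\varepsilon/3$-ball around any $\gamma \in p^{-1}(K_{\varepsilon})$ stays in $p^{-1}(K_{\varepsilon})$, and your parenthetical alternative for closedness (cosets are open because left translations are homeomorphisms in the quasitopological group) is exactly the argument the paper gives. The only cosmetic difference is that you first prove closedness directly via Lemma \ref{close path lifts} before noting the coset argument, whereas the paper goes straight to the cosets.
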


\begin{proof}
Let $p:Hom\bigl((I,\partial I),(X,*)\bigr) \rightarrow \pi_1^{top}(X)$ be the quotient map.  To show that $K_{\varepsilon}$ is open, we need to show that $p^{-1}(K_{\varepsilon})$ is open in $Hom\bigl((I,\partial I),(X,*)\bigr)$.  But an element of $p^{-1}(K_{\varepsilon})$ is a path loop at $*$ that lifts closed to $X_{\varepsilon}$, and - by Lemma \ref{close path lifts} - any path loop at $*$ that is uniformly within $\frac{\varepsilon}{3}$ of such a loop will also lift closed to $X_{\varepsilon}$.  In other words, if $\gamma \in p^{-1}(K_{\varepsilon})$, then the open uniform ball $B_{\varepsilon/3}(\gamma)$ is also in $p^{-1}(K_{\varepsilon})$, showing that $p^{-1}(K_{\varepsilon})$ is open.  Finally, the complement of $K_{\varepsilon}$ is $\bigcup_{g \notin K_{\varepsilon}} gK_{\varepsilon}$.  The cosets $gK_{\varepsilon}$ are open, because left translations are homeomorphisms in the quasitopological group $\pi_1^{top}(X)$.\end{proof}

\vspace{.1 in}
\noindent It follows, then, that $\pi_{cl}(X) = \bigcap_{\varepsilon > 0} K_{\varepsilon}$ is closed in $\pi_1^{top}(X)$.

\begin{lemma}
\label{ss point not discrete}
If a geodesic space $X$ has a sequentially singular point, then $\bar{\pi}_1^{top}(X)$ is not discrete.
\end{lemma}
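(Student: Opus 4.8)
The plan is to show that the preimage of the identity coset under the iterated quotient map from the loop space is not open, which rules out discreteness of $\bar{\pi}_1^{top}(X)$. Write $q\colon Hom\bigl((I,\partial I),(X,*)\bigr)\to\pi_1^{top}(X)$ and $\bar h\colon\pi_1^{top}(X)\to\bar{\pi}_1^{top}(X)$ for the two defining quotient maps. Since a composition of quotient maps is a quotient map, $\bar q\defeq\bar h\circ q$ is a quotient map, and $\bar q^{-1}(\{e\})=q^{-1}\bigl(\bar h^{-1}(\{e\})\bigr)=q^{-1}(\pi_{cl}(X))$, the set of path loops at $*$ whose homotopy class lies in $\pi_{cl}(X)$. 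If $\bar{\pi}_1^{top}(X)$ were discrete, then $\{e\}$ would be open, hence $q^{-1}(\pi_{cl}(X))$ would be open in the loop space; so it suffices to exhibit a single loop $\mu\in q^{-1}(\pi_{cl}(X))$ every uniform neighborhood of which contains a loop whose homotopy class is not in $\pi_{cl}(X)$.

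To build such loops, fix a sequentially singular point $x\in X$ together with path loops $\gamma_n$ based at $x$ and a sequence $r_n\searrow 0$ as in Definition \ref{regular singular point}, reparametrizing each $\gamma_n$ affinely on $[0,1]$ (this changes neither its image nor its homotopy class). Fix a path $\alpha$ from $*$ to $x$, let $c_x$ be the constant loop at $x$, and define the lollipops $\mu_n\defeq\alpha\gamma_n\alpha^{-1}$ and $\mu\defeq\alpha c_x\alpha^{-1}$, all loops at $*$, using the standard $[0,\tfrac13]$--$[\tfrac13,\tfrac23]$--$[\tfrac23,1]$ parametrization of the concatenation. Since $[\mu]=[\alpha\alpha^{-1}]$ is trivial, $\mu\in q^{-1}(\pi_{cl}(X))$. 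On the other hand, $\gamma_n$ lifts open to $X_{r_{n+1}}$, and by uniqueness of path lifts to the regular cover $\varphi_{r_{n+1}}\colon X_{r_{n+1}}\to X$ — the same principle used for lollipops throughout Section 3 — the lollipop $\mu_n$ also lifts open to $X_{r_{n+1}}$. Hence $[\mu_n]\notin K_{r_{n+1}}\supseteq\pi_{cl}(X)$, so $\mu_n\notin q^{-1}(\pi_{cl}(X))$.

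Finally, since $\gamma_n$ lies in $B_{r_n}(x)$, the chosen parametrizations give $\sup_{0\le t\le 1}d\bigl(\mu_n(t),\mu(t)\bigr)=\sup_{0\le s\le 1}d\bigl(\gamma_n(s),x\bigr)\le r_n\to 0$, so $\mu_n\to\mu$ in the uniform topology. Thus every uniform neighborhood of $\mu$ contains $\mu_n$ for all sufficiently large $n$, and these lie outside $q^{-1}(\pi_{cl}(X))$; hence $q^{-1}(\pi_{cl}(X))$ is not open, $\{e\}$ is not open in $\bar{\pi}_1^{top}(X)$, and $\bar{\pi}_1^{top}(X)$ is not discrete. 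This argument is short and I do not expect a serious obstacle; the only points needing care are the bookkeeping that makes $\bar q$ a genuine quotient map (so non-openness of the preimage in the loop space descends to $\bar{\pi}_1^{top}(X)$), the transfer of the ``lifts open'' property from $\gamma_n$ to the lollipop $\mu_n$, and computing $\sup_t d(\mu_n(t),\mu(t))$ with a parametrization on which the concatenations are literally comparable pointwise.
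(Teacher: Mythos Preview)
Your proof is correct and follows essentially the same approach as the paper: show that $\pi_{cl}(X)=\bar h^{-1}(\{e\})$ is not open by exhibiting a uniform-topology convergent sequence of loops whose classes lie outside $\pi_{cl}(X)$ but whose limit is in $\pi_{cl}(X)$. The only cosmetic difference is that the paper assumes without loss of generality that the base point itself is sequentially singular and uses the loops $\gamma_n$ directly (converging to the constant loop), whereas you keep an arbitrary base point and transport the $\gamma_n$ via lollipops $\alpha\gamma_n\alpha^{-1}$; both routes work, and yours has the mild advantage of not needing the base-point independence of the topology on $\bar{\pi}_1^{top}$.
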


\begin{proof}
We may assume without loss of generality that the base point $*$ is a sequentially singular point, and it suffices to show that $\pi_{cl}(X,*) = \bar{h}^{-1}(1_{\bar{\pi}})$ is not open, where $1_{\bar{\pi}}=\pi_{cl}(X)$ is the identity in $\bar{\pi}_1^{top}(X)$.  Let $r_n \searrow 0$ and $\gamma_n$ be sequences of positive real numbers and path loops at $*$ as in the definition of sequentially singular, and let $p:Hom((I,\partial I),(X,*)) \rightarrow \pi_1^{top}(X)$ be the quotient map.  If $V$ is any open set in $\pi_1^{top}(X)$ containing the identity $1_{\pi} \in \pi_1^{top}(X)$, then $p^{-1}(V)$ is an open set containing the constant path $*$, which means that it contains a uniform open ball of some radius $\tau$ centered at the constant path $*$.  We can choose $N \in \mathbb N$ so that $n \geq N \Rightarrow r_n < \tau$.  For such $n$, $\gamma_n$ lies in $B_{\tau}(*) \subset p^{-1}(V)$, and, thus,  $[\gamma_n]=p(\gamma_n) \in V$.  But each $\gamma_n$, by definition, eventually lifts open to $X_{\varepsilon}$ for sufficiently small $\varepsilon$, showing that every open set containing $1_{\pi}$ will contain elements not in $\pi_{cl}(X)$.  Therefore, $\pi_{cl}(X)$ is not open.\end{proof}

\begin{proof}[\textbf{Proof of Theorem \emph{\ref{uc cover eq discrete group}}}]
We have already noted that $\pi_{cl}(X)$ is closed, and the quotient of a quasitopological group by a closed, normal subgroup is a $T_1$ quasitopological group (cf. Theorems 1.5.1 and 1.5.3, \cite{AATM}).  If $\bar{\pi}_1^{top}(X)$ is discrete, then $X$ cannot have any sequentially singular points by Lemma \ref{ss point not discrete}.  Thus, it has a universal cover by Theorem \ref{main theorem 1}.  If $X$ has a universal cover, then there is some $\varepsilon > 0$ such that $\pi_{cl}(X) = K_{\varepsilon}$.  This means that for any element $g\pi_{cl}(X) \in \bar{\pi}_1^{top}(X)$, $\bar{h}^{-1}(g\pi_{cl}(X))$ equals the open set $gK_{\varepsilon}$.  Hence, each point in $\bar{\pi}_1^{top}(X)$ is open, and the revised fundamental group is discrete.\end{proof}

\vspace{.1 in}
Next, we introduce another topology on the fundamental group and examine its relationship to the existence of universal and simply connected covers. The \textit{subgroup topology} on a group $G$ determined by a collection, $\Sigma$, of subgroups of $G$ is defined as follows (see Section 2.5 of \cite{BS}).  The collection $\Sigma$ is a \textit{neighborhood family} if for any $H_1$, $H_2 \in \Sigma$, there is a subgroup $H_3 \in \Sigma$ such that $H_3 \subset H_1 \cap H_2$.  As a neighborhood family, the collection of all left cosets of subgroups in $\Sigma$ forms a basis for a topology on $G$, called the \textit{subgroup topology determined by $\Sigma$}.  This follows because the intersection of left cosets of subgroups $H_1$ and $H_2$ is a left coset of the intersection $H_1 \cap H_2$.  Thus, if $g \in g_1H_1 \cap g_2 H_2$, then there is some $g'$ such that $g \in g_1H_1 \cap g_2H_2 = g'(H_1 \cap H_2)$, which means that we can take $g' = g$.  Taking $H_3 \subset H_1 \cap H_2$, we then see that $g \in gH_3 \subset g(H_1 \cap H_2) = g_1H_1 \cap g_2H_2$.  

It is pointed out in \cite{BS} that $G$ with this topology need not be a topological group, in general.  We will show, however, that it is a topological group when the subgroups in $\Sigma$ are normal.

\begin{lemma}
\label{sg top group}
Let $G$ be a group endowed with the subgroup topology $\mathcal T_{\Sigma}$ determined by a neighborhood family of subgroups $\Sigma$.  If the subgroups in $\Sigma$ are normal, then $(G,\mathcal T_{\Sigma})$ is a topological group.
\end{lemma}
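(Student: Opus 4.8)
The plan is to verify directly that $(G,\mathcal T_\Sigma)$ is a topological group by checking continuity of a single map, exploiting the fact that normality of the subgroups in $\Sigma$ makes every left coset of an $H\in\Sigma$ also a right coset. Recall from the preceding discussion that the basic open sets of $\mathcal T_\Sigma$ are exactly the cosets $gH$ with $g\in G$ and $H\in\Sigma$. It suffices to show that the map $\mu\colon G\times G\to G$ given by $\mu(x,y)=xy^{-1}$ is continuous, since any group carrying a topology that makes this map continuous is automatically a topological group (compose with $x\mapsto(e,x)$ to get continuity of inversion, and then with $(x,y)\mapsto(x,y^{-1})$ to get continuity of multiplication).

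First I would reduce to basic neighborhoods of the target. Fix $g_1,g_2\in G$ and let $W$ be any open set containing $g_1g_2^{-1}$; choose a basic coset $g'H$ with $g_1g_2^{-1}\in g'H\subseteq W$, and note $g'H=(g_1g_2^{-1})H$, so without loss of generality $W=g_1g_2^{-1}H$ for some $H\in\Sigma$. Now I claim $U_1=g_1H$ and $U_2=g_2H$, which are basic neighborhoods of $g_1$ and $g_2$ respectively, satisfy $\mu(U_1\times U_2)\subseteq W$. The computation is
\[
\mu(U_1,U_2)=(g_1H)(g_2H)^{-1}=g_1\,H\,H\,g_2^{-1}=g_1\,H\,g_2^{-1}=g_1g_2^{-1}H,
\]
where $HH=H$ and $H^{-1}=H$ because $H$ is a subgroup, and $Hg_2^{-1}=g_2^{-1}H$ because $H$ is normal. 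Hence $\mu(U_1\times U_2)=g_1g_2^{-1}H=W\subseteq W$, so $\mu$ is continuous at the arbitrary point $(g_1,g_2)$, and therefore $(G,\mathcal T_\Sigma)$ is a topological group.

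I do not anticipate a genuine obstacle: the argument is one short coset manipulation, and the only point requiring care is to note exactly where normality enters. The subgroup topology always makes $G$ a quasitopological group by the coset argument already given in the text (left and right translations are homeomorphisms, and $(gH)^{-1}=g^{-1}H$ gives continuous inversion even without normality). What normality supplies is precisely the swap $Hg_2^{-1}=g_2^{-1}H$, which is what collapses the product $(g_1H)(g_2H)^{-1}$ back to a single basic coset and thereby upgrades joint discontinuity of multiplication (as in Fabel's example for $\pi_1^{top}$) to genuine joint continuity. As a byproduct one also records the analogous identity $(g_1H)(g_2H)=g_1g_2H$, which directly confirms joint continuity of multiplication if one prefers to present the proof that way.
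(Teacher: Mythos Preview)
Your proof is correct and follows essentially the same route as the paper: both arguments reduce to a basic coset neighborhood of the target and then use normality to collapse a product of cosets back to a single coset. The only difference is packaging---you check continuity of $(x,y)\mapsto xy^{-1}$ in one shot, while the paper verifies multiplication and inversion separately.

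One small correction to your side remark: you claim that $(gH)^{-1}=g^{-1}H$ gives continuous inversion even without normality, but $(gH)^{-1}=Hg^{-1}$, which equals $g^{-1}H$ only when $H$ is normal. Without normality the right coset $Hg^{-1}$ need not be a basic open set in the left-coset topology, so continuity of inversion is not automatic. This does not affect your main argument, which correctly invokes normality where it is needed.
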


\begin{proof}
We first show that multiplication is continuous.  Let $g_1$, $g_2 \in G$ be given.  If $U$ is open and $g_1g_2 \in U$, then there is a basis element, $gH$, $H \in \Sigma$, such that $g_1g_2 \in gH \subset U$.  This means we can take $g = g_1 g_2$.  Suppose $(h_1,h_2) \in g_1H \times g_2H$.  By normality, we have $g_1H = Hg_1$. Thus, $(h_1,h_2) \in Hg_1 \times g_2H$, and we can find $k_1$, $k_2 \in H$ such that $h_1 = k_1g_1$ and $h_2 = g_2 k_2$.  It follows that $h_1h_2 = k_1g_1g_2k_2$.  But, again using normality, $k_1g_1g_2k_2 \in Hg_1g_2k_2 = g_1g_2k_2H = g_1g_2H$, so $h_1 h_2 \in g_1g_2H \subset U$.

Next, let $g \in G$ be given.  If $U$ is open and contains $g^{-1}$, then there is some $H\in \Sigma$ such that $g^{-1} \in g^{-1}H \subset U$.  So, if $h \in gH=Hg$, there is $k \in H$ so that $h=kg$.  But then $h^{-1} = g^{-1}k^{-1} \in g^{-1}H$.\end{proof}

\vspace{.1 in}
\noindent We will also need the following result, which is part of a general subgroup topology theorem proved by Bogley and Sierdaski in \cite{BS}.  For reference purposes, we state the needed portion here as a lemma.

\begin{lemma}[\textbf{Theorem 2.9, \cite{BS}}]
\label{BS lemma}
Let $G$ be a group with the subgroup topology determined by a neighborhood family of subgroups, $\Sigma$, and let $H_{\Sigma} = \bigcap_{H \in \Sigma} H$.  Then the connected component containing $g \in G$ is $gH_{\Sigma}$, and $G$ is discrete if and only if $\Sigma$ contains the trivial subgroup.  Moreover, the following are equivalent.  \emph{1)} $G$ is $T_0$; \emph{2)} $G$ is Hausdorff; \emph{3)} $H_{\Sigma}$ is trivial; \emph{4)} $G$ is totally disconnected.
\end{lemma}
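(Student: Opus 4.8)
The plan is to derive the entire lemma from one elementary observation: whenever $H \in \Sigma$, the basic open set $gH$ is not merely open but \emph{clopen}, since its complement is the union of the remaining left cosets of $H$, each of which is again a basic open set. Thus the subgroup topology always has a basis of clopen sets. This by itself does not force total disconnectedness — the content of the lemma is precisely to identify the obstruction, which will turn out to be $H_{\Sigma}$ in every formulation.

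First I would compute the connected component $C$ of the identity $e$. For one inclusion: given $H \in \Sigma$, the sets $C \cap H$ and $C \setminus H$ are disjoint open subsets of $C$ covering it, and $e \in C \cap H$, so connectedness of $C$ forces $C \subseteq H$; intersecting over $\Sigma$ gives $C \subseteq H_{\Sigma}$. For the reverse inclusion I claim $H_{\Sigma}$ is connected: if $x \in H_{\Sigma}$ and $gH$ is a basic open set containing $x$, then $gH = xH$, and since $x \in H_{\Sigma} \subseteq H$ we get $xH = H$, hence $gH = H \supseteq H_{\Sigma}$. So every basic open set meeting $H_{\Sigma}$ already contains all of $H_{\Sigma}$; equivalently, the subspace topology on $H_{\Sigma}$ is indiscrete, so $H_{\Sigma}$ is connected and $H_{\Sigma} \subseteq C$. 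Therefore $C = H_{\Sigma}$, and since left translations are homeomorphisms, the component of an arbitrary $g$ is $gC = gH_{\Sigma}$. The discreteness criterion then follows at once: $G$ is discrete iff $\{e\}$ is open iff some basic open set $gH$ equals $\{e\}$ iff the trivial subgroup lies in $\Sigma$.

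For the chain of equivalences I would run $(3)\Rightarrow(2)\Rightarrow(1)\Rightarrow(3)$ together with $(3)\Leftrightarrow(4)$. For $(3)\Rightarrow(2)$: if $H_{\Sigma}$ is trivial and $g \neq g'$, then $g^{-1}g' \neq e$ lies outside some $H \in \Sigma$, so $gH \neq g'H$; as cosets of a common subgroup they are disjoint, and both are open, so $G$ is Hausdorff. The implication $(2)\Rightarrow(1)$ is immediate. For $(1)\Rightarrow(3)$: if $x \in H_{\Sigma}$ with $x \neq e$, then every basic open set containing $e$ equals some $H \in \Sigma$ and hence contains $x$, while by the coset computation above every basic open set containing $x$ contains $e$; thus $e$ and $x$ are topologically indistinguishable, contradicting $T_0$. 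Finally $(3)\Leftrightarrow(4)$ is read off the component formula: triviality of $H_{\Sigma}$ makes each component $gH_{\Sigma} = \{g\}$ a singleton, and conversely total disconnectedness forces the component $H_{\Sigma}$ of $e$ to reduce to $\{e\}$.

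I expect no genuine obstacle here; the only step with real content is recognizing that $H_{\Sigma}$ carries the indiscrete subspace topology, after which clopenness of cosets and homogeneity of the topology do all the work. The one conceptual subtlety worth stating carefully in the write-up is that a basis of clopen sets does \emph{not} by itself yield total disconnectedness — that requires the extra separation input ($T_0$, equivalently triviality of $H_{\Sigma}$), which is exactly why all four conditions collapse to the same thing.
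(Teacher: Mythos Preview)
Your proof is correct. Note, however, that the paper does not actually prove this lemma: it is quoted verbatim as Theorem 2.9 of Bogley--Sieradski \cite{BS} and used as a black box, so there is no ``paper's own proof'' to compare against. Your argument is a clean self-contained verification. The key observation that $H_{\Sigma}$ carries the indiscrete subspace topology (because every basic open set meeting it is a coset $gH = xH = H \supseteq H_{\Sigma}$) is exactly the right way to see that $H_{\Sigma}$ is connected, and the rest unwinds from clopenness of cosets and homogeneity under left translation, just as you say. One small remark on the discreteness criterion: you could tighten the phrasing by noting that the basic open sets containing $e$ are precisely the subgroups $H \in \Sigma$ themselves (since $e \in gH$ forces $gH = H$), so $\{e\}$ is open iff $\{e\} \in \Sigma$; your ``$gH = \{e\}$'' formulation is correct but slightly roundabout.
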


Now, we assume that $X$ is a compact geodesic space, and we consider the collection $\Sigma_{\kappa} = \{K_{\varepsilon}\}_{\varepsilon > 0}$.  This collection is a particularly simple neighborhood family, since it is actually a countable, directed set: $\delta < \varepsilon$ implies that $K_{\delta} \subset K_{\varepsilon}$, and the only changes occur at a discrete, strictly decreasing set of values, namely the critical values.  In fact, given $K_{\varepsilon_1}$, $K_{\varepsilon_2} \in \Sigma_{\kappa}$, we may assume without loss of generality that $\varepsilon_1 \leq \varepsilon_2$, in which case $K_{\varepsilon_1} \cap K_{\varepsilon_2} = K_{\varepsilon_1}$.  Note that $H_{\Sigma_{\kappa}} = \bigcap_{\varepsilon > 0}K_{\varepsilon} = \pi_{cl}(X)$.

\begin{definition}
\label{ctfg}
Let $\pi_1^{\mathcal C}(X)$ denote the fundamental group of $X$ with the subgroup topology determined by $\Sigma_{\kappa}$.  We call this topology the covering topology on $\pi_1(X)$, and we call $\pi_1^{\mathcal C}(X)$ the $\mathcal C$-fundamental group.  The quotient group $\bar{\pi}_1^{\mathcal C}(X) \defeq \pi_1^{\mathcal C}(X)/\pi_{cl}(X)$ with the quotient topology from $\pi_1^{\mathcal C}(X)$ is the revised $\mathcal C$-fundamental group.
\end{definition}

By Lemma \ref{sg top group}, $\pi_1^{\mathcal C}(X)$ is a topological group, and, by Lemma \ref{BS lemma}, it is Hausdorff if and only if $\pi_{cl}(X) = 1$.  As basis elements, the subgroups $K_{\varepsilon}$ are obviously open and, thus, also closed in $\pi_1^{\mathcal C}(X)$.  Hence, $\pi_{cl}(X)$ is closed in $\pi_1^{\mathcal C}(X)$.  Moreover, we see from Lemma \ref{BS lemma} that the connected component in $\pi_1^{\mathcal C}(X)$ containing $g$ is the coset $g\pi_{cl}(X)$.  These statements, along with the topological group structure of $\pi_1^{\mathcal C}(X)$, immediately imply the following.

\begin{lemma}
\label{sg quotient tg}
If $X$ is a compact geodesic space, then $\bar{\pi}_1^{\mathcal C}(X)$ is a Hausdorff topological group, and its elements are in bijective correspondence with the connected components of $\pi_1^{\mathcal C}(X)$.
\end{lemma}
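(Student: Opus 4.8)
The plan is to read off both assertions from the discussion immediately preceding the lemma, together with Lemmas \ref{sg top group} and \ref{BS lemma}. First I would note that $\pi_1^{\mathcal C}(X)$ is a topological group by Lemma \ref{sg top group}, since each $K_\varepsilon$ is normal in $\pi_1(X)$, and that $\pi_{cl}(X)=\bigcap_{\varepsilon>0}K_\varepsilon$ is a closed, normal subgroup of it, as observed above (each $K_\varepsilon$ is clopen, being a basis element whose complement is a union of cosets). A quotient of a topological group by a closed normal subgroup is again a topological group, and it is Hausdorff precisely because the subgroup is closed; hence $\bar{\pi}_1^{\mathcal C}(X)=\pi_1^{\mathcal C}(X)/\pi_{cl}(X)$ is a Hausdorff topological group. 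One could equally invoke Lemma \ref{BS lemma}: the quotient topology on $\bar{\pi}_1(X)$ coincides with the subgroup topology determined by the normal family $\{K_\varepsilon/\pi_{cl}(X)\}_{\varepsilon>0}$, whose total intersection is trivial, so the $T_0$---hence Hausdorff---criterion there applies.

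For the correspondence with connected components, recall from Lemma \ref{BS lemma} that, with $H_{\Sigma_\kappa}=\bigcap_{\varepsilon>0}K_\varepsilon=\pi_{cl}(X)$, the connected component of $\pi_1^{\mathcal C}(X)$ containing a given $g$ is exactly the coset $g\pi_{cl}(X)$. But the underlying set of $\bar{\pi}_1^{\mathcal C}(X)$ is, by definition, the set of all such cosets. Therefore the assignment sending the element $g\pi_{cl}(X)\in\bar{\pi}_1^{\mathcal C}(X)$ to the connected component $g\pi_{cl}(X)\subset\pi_1^{\mathcal C}(X)$ is a well-defined bijection: well-definedness and injectivity hold because two group cosets are equal iff they coincide as subsets, and surjectivity because every component arises this way.

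I expect no genuine obstacle here. The substantive inputs---that a normal family of subgroups yields a topological group (Lemma \ref{sg top group}), and that the connected components are the cosets of the total intersection (Lemma \ref{BS lemma})---are already established, so what remains is purely formal. The only mildly delicate point is the passage from ``$K_\varepsilon$ clopen'' to ``$\pi_{cl}(X)$ closed,'' and that is automatic since an arbitrary intersection of closed sets is closed.
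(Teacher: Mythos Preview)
Your proposal is correct and matches the paper's approach exactly: the paper states that the lemma follows immediately from the preceding discussion, which is precisely the combination of Lemma \ref{sg top group} (topological group structure from normality of the $K_\varepsilon$), the observation that $\pi_{cl}(X)$ is closed, and Lemma \ref{BS lemma} (connected components are the cosets $g\pi_{cl}(X)$). Your write-up simply makes these deductions explicit.
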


Note, also, that the topological group structure of $\pi_1^{\mathcal C}(X)$ shows that the covering and uniform quotient topologies on $\pi_1(X)$ need not coincide in general.  As we mentioned in the introduction, there are examples of compact geodesic spaces for which $\pi_1^{top}(X)$ is not a topological group, while $\pi_1^{\mathcal C}(X)$ always is.  However, since every $gK_{\varepsilon} \subset \pi_1(X)$ is a basis element in the covering topology and open in the uniform quotient topology, every open set in the former topology is also open in the latter.  Thus, the uniform quotient topology is always at least finer than the covering topology on $\pi_1(X)$.  Furthermore, we have the following.

\begin{lemma}
\label{topologies equal}
If $X$ is a compact geodesic space with a universal cover, and if $\pi_{cl}(X) = \pi_1^{sg}(X)$, then the covering and uniform quotient topologies on $\pi_1(X)$ are equal.
\end{lemma}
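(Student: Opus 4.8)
The plan is to show the two topologies have exactly the same open sets. It was noted above that the uniform quotient topology is at least as fine as the covering topology, so it remains only to prove that every $\pi_1^{top}(X)$-open set is open in $\pi_1^{\mathcal C}(X)$. First I would unwind what the covering-open sets are: since $X$ has a universal cover, $Cr(X)$ is finite, so Lemma \ref{univ cover first eq} gives an $\varepsilon_0 > 0$ with $\pi_{cl}(X) = K_{\varepsilon_0}$. Thus $\pi_{cl}(X)$ itself is a basis element of the covering topology, and, being the smallest member of $\Sigma_\kappa = \{K_\varepsilon\}_{\varepsilon>0}$, it follows that a set is covering-open if and only if it is a union of left cosets of $\pi_{cl}(X)$. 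Hence the goal reduces to: for every $\pi_1^{top}(X)$-open set $U$ and every $g \in \pi_{cl}(X)$, one has $Ug = U$.

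Here I would bring in the hypothesis $\pi_{cl}(X) = \pi_1^{sg}(X)$. Since $\pi_1^{sg}(X)$ is generated by small loop lollipops, and the collection of small loop lollipops is closed under inversion (the inverse of $\alpha\beta\alpha^{-1}$ is $\alpha\beta^{-1}\alpha^{-1}$, and $\beta^{-1}$ is small whenever $\beta$ is), it suffices to prove $Us \subseteq U$ for every $\pi_1^{top}(X)$-open $U$ and every small loop lollipop $s$; iterating over generators and their inverses then yields $Ug = U$ for all $g \in \pi_1^{sg}(X) = \pi_{cl}(X)$, so that $U$ is a union of cosets of $\pi_{cl}(X)$ and hence covering-open.

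The core of the proof is a ``hide the lollipop'' trick. Fix an open $U$, an element $g = [\gamma] \in U$ with $\gamma$ a loop at $*$, and a small loop lollipop $s = [\alpha\beta\alpha^{-1}]$, where $\alpha$ is a path from $*$ to $x \defeq \alpha(1)$ and $\beta$ is a small path loop based at $x$. Let $p : Hom((I,\partial I),(X,*)) \rightarrow \pi_1^{top}(X)$ be the quotient map and let $c_x$ be the constant loop at $x$. Choosing a parametrization of $\gamma_0 \defeq \gamma * \alpha * c_x * \alpha^{-1}$, we have $[\gamma_0] = [\gamma] = g$, so $\gamma_0 \in p^{-1}(U)$; since $p^{-1}(U)$ is open there is $\tau > 0$ with $B_\tau(\gamma_0) \subseteq p^{-1}(U)$. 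Because $\beta$ is small, pick a loop $\beta_\tau$ at $x$ with image contained in $B_\tau(x)$ and homotopic to $\beta$ rel $x$; compactness of its image gives $\sup_t d(x, \beta_\tau(t)) < \tau$. Now parametrize $\gamma_1 \defeq \gamma * \alpha * \beta_\tau * \alpha^{-1}$ so as to agree with $\gamma_0$ except on the subinterval where $\gamma_0$ is constantly $x$, where $\gamma_1$ instead traces $\beta_\tau$. Then $\gamma_0$ and $\gamma_1$ differ only on that subinterval, on which the distance between them is $d(x, \beta_\tau(t)) < \tau$, so $\sup_t d(\gamma_0(t),\gamma_1(t)) < \tau$ and $\gamma_1 \in B_\tau(\gamma_0) \subseteq p^{-1}(U)$. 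Since $[\gamma_1] = [\gamma]\,[\alpha\beta_\tau\alpha^{-1}] = gs$, this shows $gs \in U$, as needed.

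The step I expect to require the most care is this last construction: arranging the parametrizations of $\gamma_0$ and $\gamma_1$ so that they literally coincide off a single subinterval, and verifying that the small-loop hypothesis really does supply a representative of $\beta$ (rel $x$) of diameter strictly less than the $\tau$ coming from the openness of $p^{-1}(U)$. Once these are in place the uniform estimate is immediate. The remaining ingredients are routine: the reduction to cosets of $\pi_{cl}(X)$ uses only finiteness of $Cr(X)$, and the reduction to individual lollipops uses only that $\pi_1^{sg}(X)$ is generated by an inverse-closed family of small loop lollipops.
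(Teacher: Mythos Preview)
Your proof is correct and follows essentially the same approach as the paper: both use the ``hide the lollipop'' trick of replacing each small loop $\beta$ by a constant to get a representative of $g$ in $p^{-1}(U)$, then using openness of $p^{-1}(U)$ to squeeze $\beta$ back in. The only difference is organizational: the paper handles an arbitrary element of $gK_\varepsilon$, written as $\gamma$ times a product of $n$ small loop lollipops, all at once with an explicit parametrization on $[0,1]$, whereas you first reduce to a single generator $s$ via the observation that $Us \subseteq U$ for all $s$ in an inverse-closed generating set implies $Ug = U$ for all $g$ in the generated subgroup. Your reduction is a clean way to avoid the bookkeeping of the $n$-fold parametrization, but the underlying idea is identical.
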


\begin{proof}
We need to show that every open set in $\pi_1^{top}(X)$ is open in $\pi_1^{\mathcal C}(X)$.  Let $U \subset \pi_1(X)$ be open in the uniform quotient topology, and let $g=[\gamma] \in U$ be given.  We need to find $\varepsilon > 0$ such that $g \in gK_{\varepsilon} \subset U$.

Since $X$ has a universal cover, $Cr(X)$ is finite, and we choose $\varepsilon < \min Cr(X)$.  Then $K_{\varepsilon}=\pi_{cl}(X)=\pi_1^{sg}(X)$.  Let $[\lambda]$ be any element in $gK_{\varepsilon}$. Then there is a finite product of small path loops, say $\alpha_1 \beta_1 \alpha_1^{-1} \cdots \alpha_n \beta_n \alpha_n^{-1}$, such that $[\lambda] = [\gamma \alpha_1 \beta_1 \alpha_1^{-1} \cdots \alpha_n \beta_n \alpha_n^{-1}]$.  We assume that $\gamma \alpha_1 \beta_1 \alpha_1^{-1} \cdots \alpha_n \beta_n \alpha_n^{-1}$ is parameterized on $[0,1]$ so that $\gamma$ is parameterized on $[0,1/2]$, $\alpha_i$ is parameterized on $\bigl[\frac{1}{2}+\frac{3(i-1)}{6n},\frac{1}{2}+\frac{3i-2}{6n}\bigr]$, $\beta_i$ is parameterized on $\bigl[\frac{1}{2} + \frac{3i-2}{6n},\frac{1}{2}+\frac{3i-1}{6n}\bigr]$, and $\alpha_i^{-1}$ is parameterized on $\bigl[ \frac{1}{2} + \frac{3i-1}{6n},\frac{1}{2} + \frac{3i}{6n}\bigr]$, for $i=1,\dots,n$.

Now, let $\gamma'$ denote the path loop $\gamma \alpha_1 \sigma_1 \alpha_1^{-1} \cdots \alpha_n \sigma_n \alpha_n^{-1}$, where $\sigma_i$ is the constant path at the endpoint of $\alpha_i$.  We asssume that $\gamma'$ is parameterized in the same way as $\gamma \alpha_1 \beta_1 \alpha_1^{-1} \cdots \alpha_n \beta_n \alpha_n^{-1}$, just with $\sigma_i$ replacing $\beta_i$. Clearly $\gamma'$ is homotopic to $\gamma$, so that $\gamma' \in p^{-1}(U) \subset Hom((I,\partial I),(X,*))$.  Thus, there is some $\delta > 0$ such that the open uniform ball $B_{\delta}(\gamma')$ is contained in $p^{-1}(U)$.  Since each $\beta_i$ is a small path loop, we can contract each one so that it lies in the open ball of radius $\delta$ centered at the endpoint of $\alpha_i$.  Doing so while leaving $\gamma$ and each $\alpha_i$ fixed, and maintaining the parameterization of described above, we obtain a representative of $[\lambda]= [\gamma \alpha_1 \beta_1 \alpha_1^{-1} \cdots \alpha_n \beta_n \alpha_n^{-1}]$ that is uniformly within $\delta$ of $\gamma'$.  Thus, there is a path loop $\lambda' \in [\lambda]$ that lies in $p^{-1}(U)$, which means that $[\lambda] \in U$.  This shows that $gK_{\varepsilon} \subset U$, and $U$ is open in the covering topology.\end{proof}

\begin{example}
For the spaces in Example \ref{K example}, the critical and uniform quotient topologies agree.  By Proposition \ref{charac simply conn covers}, the topologies also agree when $X$ is compact and semilocally simply connected.
\end{example}

We can now prove Theorem \ref{sgtop thm1}.

\begin{proof}[\textbf{Proof of Theorem \ref{sgtop thm1}}]
By Lemma \ref{BS lemma}, $\pi_1^{\mathcal C}(X)$ is discrete if and only if $\Sigma_{\kappa}$ contains the trivial subgroup.  If $\pi_1^{\mathcal C}(X)$ is discrete, this means that $K_{\varepsilon} = 1$ for some $\varepsilon$, which further means that $K_{\tau} = 1$ for all $0 < \tau \leq \varepsilon$.  Hence, $Cr(X)$ is finite and $\pi_{cl}(X) = 1$, and the result follows from Proposition \ref{charac simply conn covers}.  On the other hand, if $X$ has a simply connected cover, then $K_{\varepsilon}$ is eventually trivial for small enough $\varepsilon$, also by Proposition \ref{charac simply conn covers}.  But this implies that $\Sigma_{\kappa}$ contains the trivial group, and $\pi_1^{\mathcal C}(X)$ is discrete.  This proves the simply connected covering portion of the theorem.

Suppose $X$ has a universal cover. Then $\pi_{cl}(X) = K_{\varepsilon}$ for small enough $\varepsilon$.  But this means that $\bar{h}^{-1}(1_{\bar{\pi}}) = \pi_{cl}(X)$ is open in $\pi_1^{\mathcal C}(X)$, and $\bar{\pi}_1^{\mathcal C}(X)$ is discrete.

Lastly, suppose $X$ does not have a universal cover.  Then it has a sequentially singular point, which we can assume is the base point $*$.  Let $U$ be any neighborhood of the identity $1_{\pi} \in \pi_1^{\mathcal C}(X)$ with $1_{\pi} \in K_{\varepsilon} \subset U$.  If $\gamma_n$ and $r_n \searrow 0$ are as in the definition of sequentially singular, then, once $r_n < \varepsilon$, each $\gamma_n$ lifts closed to $X_{\varepsilon}$.  Thus, $[\gamma_n] \in K_{\varepsilon} \subset U$ for all but finitely many $n$, showing that any neighborhood of the identity $1_{\pi} \in \pi_1^{\mathcal C}(X)$ will contain elements not in $\pi_{cl}(X)$. But, similarly to the proof of Lemma \ref{ss point not discrete}, this means that $\pi_{cl}(X)=\bar{h}^{-1}(1_{\bar{\pi}})$ is not open.  Hence, $\bar{\pi}_1^{\mathcal C}(X)$ is not discrete.\end{proof}

\vspace{.1 in}
It is useful to further compare and contrast the covering and uniform quotient topologies on $\pi_1(X)$.  We have already noted that the covering topology has the advantage of always making $\pi_1(X)$ a topological group.  Comparing Theorem \ref{sgtop thm1} to Theorem \ref{uc cover eq discrete group} and the aforementioned result of Fabel that $X$ has a simply connected cover if and only if $\pi_1^{top}(X)$ is discrete, we see that the topologies are equally effective in determining when $X$ has a universal or simply connected cover.  Consequently, there is also no difference between the topologies in detecting the presence of singular points.  For example, if $X$ has a sequentially singular point, then $X$ has no universal cover, and all four groups $\pi_1^{top}(X)$, $\bar{\pi}_1^{top}(X)$, $\pi_1^{\mathcal C}(X)$, and $\bar{\pi}_1^{\mathcal C}(X)$ are non-discrete.  Conversely, if either $\bar{\pi}_1^{top}(X)$ or $\bar{\pi}_1^{\mathcal C}(X)$ is non-discrete, then $X$ has a sequentially singular point.  If either $\pi_1^{top}(X)$ or $\pi_1^{\mathcal C}(X)$ is non-discrete, then $X$ has no simply connected cover, which - by Proposition \ref{charac simply conn covers} - implies one of two mutually exclusive possibilities:  either $Cr(X)$ is infinite and $X$ has a sequentially singular point, or $X$ has a non-simply connected universal cover and, thus, a degenerate point but no sequentially singular points. 

There is, however, one important advantage offered by the covering topology, in addition to the topological group structure.  Namely, this topology provides a useful geometric picture of $\pi_1^{\mathcal C}(X)$ and its relationship to $\bar{\pi}_1^{\mathcal C}(X)$, which further leads to conditions under which $\pi_{cl}(X)$ is trivial.  This brings us to Theorem \ref{sgtop thm2}.  The first statement of Theorem \ref{sgtop thm2} and the equivalence of $1$, $2$, and $3$ follow directly from Lemma \ref{BS lemma}.  The last statement of Theorem \ref{sgtop thm2} follows from part 1 and our result that $\bar{\pi}_1(X)$ injects into the first shape group.  All that remains is to prove the equivalence of $4$ and $5$ to the other conditions.  Before proceeding, some further remarks on geometric groups are in order.

As we mentioned in the introduction, the existence of a geometry on a topological group $G$ dictates certain topological conditions that must hold for $G$, and the particular semigroup $S$ indexing the geometry determines what kind of metrics induce the given topology.  For example, assume $G$ is a first countable, Hausdorff topological group.  If $G$ is also complete, then it admits a geometry $\{U_s\}$ over $S=\mathbb R_+$ - the positive reals with usual addition as the semigroup operation - if and only if it is path connected and locally path connected and admits a length metric that is compatible with the original topology (Proposition 3.1, remarks following Example 1.8 in \cite{BPGG}).  On the other hand, $G$ admits a geometry over $\mathbb R^{max}$ if and only if it admits a left-invariant \textit{ultrametric} compatible with the original topology, and this holds if and only if $G$ is totally disconnected (Example 1.6 and Section 3 in \cite{BPGG}).  Recall that an ultrametric satisfies the stronger triangle inequality $d(x,z) \leq \max\{d(x,y),d(y,z)\}$ for all $x,y,z$, and ultrametric spaces are necessarily totally disconnected.

Now, consider $\pi_1^{\mathcal C}(X)$ and the collection of open subgroups $\Sigma_{\kappa} = \{K_{\varepsilon}\}_{\varepsilon > 0}$.  We take $\Sigma_{\kappa}$ indexed over $\mathbb R^{max}$.  Clearly this collection is a local basis at the identity, simply by definition.  We already know that $\delta < \varepsilon$ if and only if $K_{\delta} \subseteq K_{\varepsilon}$, since the kernels form a nested, decreasing collection.  Likewise, we have $K_{\delta}K_{\varepsilon} = K_{\max\{\delta,\varepsilon\}}$, since $\delta \leq \varepsilon$ would imply that $K_{\delta} \subseteq K_{\varepsilon} \Rightarrow K_{\delta}K_{\varepsilon} = K_{\varepsilon}$, and similarly if $\varepsilon < \delta$.  When $X$ is compact, $K_{\varepsilon} = \pi_1(X)$ for any $\varepsilon > diam(X)$, since $X_{\varepsilon}$ is the trivial cover in that case.  Thus, $\bigcup_{\varepsilon > 0} K_{\varepsilon} = \pi_1(X)$.  Finally, each $K_{\varepsilon}$ is symmetric since it is a subgroup.

Thus, we see that $\Sigma_{\kappa}$ satisfies all of the conditions for a geometry over $\mathbb R^{max}$, except, perhaps, the condition $\pi_{cl}(X) = \bigcap_{\varepsilon > 0} K_{\varepsilon} = 1$.

\begin{proof}[\textbf{Proof of Theorem \ref{sgtop thm2}}]
We first note that $\pi_1^{\mathcal C}(X)$ is first countable.  In fact, $\Sigma_{\kappa}$ is a countable collection, for if $Cr(X) = \{\varepsilon_n\}$, with $\{\varepsilon_n\}$ either finite or strictly decreasing to $0$, then the only distinct elements of $\Sigma_{\kappa}$ are $K_{\varepsilon_n}$.  So, if $g$ is any element in $\pi_1^{\mathcal C}(X)$ and $U$ is any open set containing $g$, then there is $\varepsilon > 0$ such that $g \in gK_{\varepsilon} \subset U$.  But there is some $n$ such that $K_{\varepsilon} = K_{\varepsilon_n}$.  Hence, the collection $\{gK_{\varepsilon_n}\}$ forms a countable local basis at $g$.

We only need to prove that $1 \Leftrightarrow 4 \Rightarrow 5 \Rightarrow 2$.  That $1$ and $4$ are equivalent follows from the previous discussion and the definition of a geometry. If $4$ holds, then $\pi_1^{\mathcal C}(X)$ is a Hausdorff, first countable topological group admitting a geometry over $\mathbb R^{max}$, which means that $5$ holds by the results of Berestovskii-Plaut-Stallman cited above.  Finally, if $5$ holds, then $\pi_1^{\mathcal C}(X)$ is totally disconnected.\end{proof}

\begin{example}
\label{1d example}
Any one-dimensional, compact geodesic space, $X$, satisfies $\pi_{cl}(X) = 1$.  Thus, for such a space, $\pi_1^{\mathcal C}(X)$ is a totally disconnected, ultrametrizable, geometric group.  In particular, this holds for the Hawaiian earring and fractals like the Sierpinski carpet and gasket. $\blacksquare$
\end{example}

It is natural to wonder whether or not the covering topology may be generalized by considering other subgroup topologies on $\pi_1(X)$ determined by normal subgroups.  For example, one possibility is to consider the general \textit{Spanier groups} of $X$.  The Spanier subgroup determined by an open covering $\mathcal U$ of $X$ is the normal subgroup $\pi_1^{\mathcal U}(X) \subset \pi_1(X)$ generated by lollipops $\alpha \beta \alpha^{-1}$ where $\beta$ is a path loop lying in an element of $\mathcal U$.  The subgroups $K_{\varepsilon}$, for example, are Spanier subgroups.  In fact, we pointed out in Remark \ref{SW connection} that $K_{\varepsilon}$ equals the Spanier subgroup determined by the open covering of $\frac{3\varepsilon}{2}$-balls.  

In the more general topological setting, the Spanier subgroups may provide a means of determining topologies on $\pi_1(X)$ alternative to the compact-open quotient topology.  In the case of the covering topology on a compact geodesic case, however - where every open cover is refined by a uniform open cover of metric balls - this particular level of generalization turns out to be redundant, as the following shows.  From another point of view, though, this result also shows that the covering topology is determined by a deeper structure than just a geodesic metric and the critical spectrum; it is a more general topology that derives from a very natural construction.

\begin{lemma}
Let $X$ be compact geodesic, and let $\Omega(X)$ be the collection of open coverings of $X$.  Let $\Sigma_{\Omega}$ be the collection of Spanier subgroups of $X$ determined by the open coverings in $\Omega(X)$.  Then $\Sigma_{\Omega}$ is a neighborhood family, and the topology on $\pi_1(X)$ determined by $\Sigma_{\Omega}$ is equivalent to the covering topology.
\end{lemma}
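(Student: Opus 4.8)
The plan is to reduce everything to one elementary observation: if an open cover $\mathcal V$ of $X$ refines $\mathcal U$, then $\pi_1^{\mathcal V}(X)\subseteq\pi_1^{\mathcal U}(X)$. Indeed, a generating lollipop $\alpha\beta\alpha^{-1}$ of $\pi_1^{\mathcal V}(X)$ has head $\beta$ lying in some $V\in\mathcal V$, hence in some $U\in\mathcal U$, so $\alpha\beta\alpha^{-1}$ is already a generating lollipop of $\pi_1^{\mathcal U}(X)$; as the generators of the former subgroup are among those of the latter, the generated subgroups are nested. This immediately yields the neighborhood family claim: given $\mathcal U_1,\mathcal U_2\in\Omega(X)$, the common refinement $\mathcal W=\{U\cap V:U\in\mathcal U_1,\ V\in\mathcal U_2\}$ is again an open cover of $X$ and refines both $\mathcal U_i$, so $\pi_1^{\mathcal W}(X)\in\Sigma_\Omega$ satisfies $\pi_1^{\mathcal W}(X)\subseteq\pi_1^{\mathcal U_1}(X)\cap\pi_1^{\mathcal U_2}(X)$. (Each Spanier subgroup is normal, so by Lemma \ref{sg top group} the resulting subgroup topology is a group topology, though only the neighborhood family property is needed to know the cosets form a basis.)

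For the first inclusion of topologies, recall --- as noted just before the lemma and in Remark \ref{SW connection} --- that $K_\varepsilon=\pi^{3\varepsilon/2}(X,*)$ is precisely the Spanier subgroup $\pi_1^{\mathcal B_{3\varepsilon/2}}(X)$ determined by the cover $\mathcal B_{3\varepsilon/2}$ of $X$ by all open balls of radius $\tfrac{3\varepsilon}{2}$. Since $\mathcal B_{3\varepsilon/2}\in\Omega(X)$, this shows $\Sigma_\kappa\subseteq\Sigma_\Omega$ as collections of subgroups of $\pi_1(X)$; consequently every basic open set $gK_\varepsilon$ of the covering topology is one of the basic open sets $g\pi_1^{\mathcal U}(X)$ of the topology determined by $\Sigma_\Omega$, so the covering topology is contained in the $\Sigma_\Omega$-topology.

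It remains to prove the reverse inclusion, for which it suffices to show that every Spanier subgroup $\pi_1^{\mathcal U}(X)$, $\mathcal U\in\Omega(X)$, contains some $K_\varepsilon$. Here compactness enters: $\mathcal U$ has a Lebesgue number $\ell>0$, so any subset of $X$ of diameter less than $\ell$ lies in a member of $\mathcal U$. Choosing $\varepsilon>0$ with $3\varepsilon<\ell$, every ball $B_{3\varepsilon/2}(x)$ has diameter at most $3\varepsilon<\ell$ and hence lies in some $U\in\mathcal U$; that is, $\mathcal B_{3\varepsilon/2}$ refines $\mathcal U$, and the refinement inclusion from the first paragraph gives $K_\varepsilon=\pi_1^{\mathcal B_{3\varepsilon/2}}(X)\subseteq\pi_1^{\mathcal U}(X)$. (Equivalently, Lemma \ref{geodesic loop reduction} shows that any $[\gamma]\in K_\varepsilon$ is a product of $\tfrac{3\varepsilon}{2}$-lollipops, each of which is a generating lollipop of $\pi_1^{\mathcal U}(X)$ once $3\varepsilon<\ell$.) Now take any basic open set $g\pi_1^{\mathcal U}(X)$ of the $\Sigma_\Omega$-topology and any point $h$ in it; then $h\pi_1^{\mathcal U}(X)=g\pi_1^{\mathcal U}(X)$ contains $hK_\varepsilon$, a neighborhood of $h$ in the covering topology. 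Hence $g\pi_1^{\mathcal U}(X)$ is open in the covering topology, and the two topologies coincide.

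I do not expect a genuine obstacle here: the content lies entirely in the identification $K_\varepsilon=\pi_1^{\mathcal B_{3\varepsilon/2}}(X)$, which is already recorded in Remark \ref{SW connection}, together with the Lebesgue number estimate; the rest is coset bookkeeping. The only points that must be stated carefully are that $\mathcal B_{3\varepsilon/2}$ genuinely belongs to $\Omega(X)$ and that ``refinement'' translates into containment of the corresponding Spanier subgroups.
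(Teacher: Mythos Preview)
Your proof is correct and follows essentially the same route as the paper: the refinement inclusion, the common-refinement argument for the neighborhood family, the identification $K_\varepsilon=\pi_1^{\mathcal B_{3\varepsilon/2}}(X)$ to get $\Sigma_\kappa\subseteq\Sigma_\Omega$, and a Lebesgue-number argument to produce $K_\varepsilon\subseteq\pi_1^{\mathcal U}(X)$ for the reverse containment. The only cosmetic difference is that the paper parameterizes the ball cover by radius $\varepsilon$ and concludes $K_{2\varepsilon/3}\subseteq\pi_1^{\mathcal U}(X)$, whereas you parameterize by $3\varepsilon/2$ and conclude $K_\varepsilon\subseteq\pi_1^{\mathcal U}(X)$; these are the same statement.
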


\begin{proof}
We first note that if $\mathcal U$, $\mathcal V \in \Omega(X)$ and $\mathcal V$ is a refinement of $\mathcal U$, then $\pi_1^{\mathcal V}(X) \subseteq \pi_1^{\mathcal U}(X)$, for if $\alpha \beta \alpha^{-1}$ is a lollipop with $\beta$ lying in an element of $V \in \mathcal V$, then there is some $U \in \mathcal U$ such that $V \subset U$. But this means that $\alpha \beta \alpha^{-1}$ is a $U$-lollipop, also.  So, suppose $\pi_1^{\mathcal U}(X)$, $\pi_1^{\mathcal V}(X) \in \Sigma_{\Omega}$, and let $\mathcal W = \mathcal V \cap \mathcal U = \{W\subset X : W = V \cap U, \ V \in \mathcal V, \ U \in \mathcal U\}$.  Then $\mathcal W$ is an open covering of $X$, and it is a refinement of both $\mathcal U$ and $\mathcal V$.  It follows that $\pi_1^{\mathcal W}(X) \subset \pi_1^{\mathcal V}(X)$ and $\pi_1^{\mathcal W}(X) \subset \pi_1^{\mathcal U}(X)$, which further implies that $\pi_1^{\mathcal W}(X) \subset \pi_1^{\mathcal U}(X) \cap \pi_1^{\mathcal V}(X)$.  This shows that $\Sigma_{\Omega}$ is a neighborhood family.

We let $\pi_1^{\mathcal S}(X)$ denote $\pi_1(X)$ with the subgroup topology determined by $\Sigma_{\Omega}$, and we call this topology the Spanier topology.  Note that since the Spanier groups are normal, $\pi_1^{\mathcal S}(X)$ is a topological group.

If $A \subset \pi_1(X)$ is open in the covering topology and $g \in A$, then there is some $K_{\varepsilon}$ such that $g \in gK_{\varepsilon} \subset A$.  But $K_{\varepsilon}$ is in $\Sigma_{\Omega}$, so $gK_{\varepsilon}$ is a basis element in the Spanier topology.  Thus, $A$ is open in $\pi_1^{\mathcal S}(X)$.  Conversely, suppose $A \subset \pi_1(X)$ is open in the Spanier topology, and let $g \in A$ be given.  There is some $\mathcal U \in \Omega(X)$ such that $g \in g\pi_1^{\mathcal U}(X) \subset A$.  Now, since $X$ is compact, there is some $\varepsilon > 0$ such that every open ball of radius $\varepsilon$ is contained in some element of $\mathcal U$.  Let $\mathcal V$ be the open covering of $X$ consisting of all open balls of radius $\varepsilon$, so that $\mathcal V$ is a refinement of $\mathcal U$.  Then $\pi_1^{\mathcal V}(X) \subset \pi_1^{\mathcal U}(X)$, and $\pi_1^{\mathcal V}(X)$ is just $K_{2\varepsilon/3}$.  It follows that $g \in gK_{2\varepsilon/3} = g\pi_1^{\mathcal V}(X) \subset g\pi_1^{\mathcal U}(X) \subset A$, and $A$ is open in the covering topology.\end{proof}

\addtocontents{toc}{\protect\setcounter{tocdepth}{0}}
\subsection*{Acknowledgements}
\addtocontents{toc}{\protect\setcounter{tocdepth}{1}}
The author would like to thank Conrad Plaut and Christina Sormani for their careful reading and helpful suggestions, Ziga Virk for sharing information on small loops, and the mathematics department at The University of Connecticut for their support.  Some of this work was completed while the author was a doctoral candidate at The University of Tennessee.

\vspace{.2 in}

\end{document}